\newcommand{\R}{{\mathbb  R}}
\newcommand{\kf}[2]{{\begin{pmatrix} #1 \\ #2 \end{pmatrix}}}
\newcommand{\D}{{\mathbb  D}}
\newcommand{\T}{\mathbb{T}}
\newcommand{\Z}{{\mathbb  Z}}
\newcommand{\N}{{\mathbb  N}}
\newcommand{\C}{{\mathbb  C}}
\newcommand{\OZ}{{\mathbf{0}}}
\newcommand{\dd}{{\mathrm{d}}}
\newcommand{\OID}{{\mathbf{I}}}
\newcommand{\bI}{\mathbf{I}}
\newcommand{\cP}{\mathcal{P}}
\newcommand{\cG}{\mathcal{G}}
\newcommand{\cC}{\mathcal{C}}
\newcommand{\I}{\mathbf{I}}
\newcommand{\0}{\mathbb{O}}
\newcommand{\bO}{\mathbf{0}}
\newcommand{\tto}{\!\!\to\!}
\newcommand{\bC}{\mathbf{C}}
\newcommand{\bB}{\mathbf{B}}
\newcommand{\be}{\mathbf{e}}
\newcommand{\btheta}{\boldsymbol{\theta}}
\newcommand{\wt}{\widetilde}
\newcommand{\rx}{{X}}
\newcommand{\er}{\mathfrak{R}}
\newcommand{\cM}{\mathcal{M}}
\newcommand{\cK}{{\mathcal K}}
\newcommand{\fD}{{\mathfrak D}}
\newcommand{\cH}{\mathcal{H}}
\newcommand{\cD}{\mathcal{D}}
\newcommand{\cX}{\mathcal{X}}
\newcommand{\cU}{\mathcal{U}}
\newcommand{\cE}{\mathcal{E}}
\newcommand{\cY}{\mathcal{Y}}
\newcommand{\cA}{\mathcal{A}}
\newcommand{\1}{\mathbf{1}}
\newcommand{\te}{\theta}
\newcommand{\f}{\varphi}
\newcommand{\e}{\varepsilon}
\DeclareMathOperator{\clos}{clos}
\DeclareMathOperator{\rank}{rank}
\DeclareMathOperator{\Ker}{Ker}
\DeclareMathOperator{\Ran}{Ran}
\DeclareMathOperator{\spa}{span}
\DeclareMathOperator{\cspa}{\overline{span}}
\DeclareMathOperator{\re}{Re}
\newcommand{\la}{\lambda}
\newcommand{\ci}[1]{_{_{\scriptstyle #1}}}
\newcommand{\ti}[1]{_{\scriptstyle \text{\rm #1}}}
\chardef\mathlig@atcode\count255
\def\actively#1#2{\begingroup\uccode`\~=`#2\relax\uppercase{\endgroup#1~}}
\def\mathlig@gobble{\afterassignment\mathlig@next@cmd\let\mathlig@next= }
\def\mathlig@delim{\mathlig@delim}
\def\mathlig@defcs#1{\expandafter\def\csname#1\endcsname}
\def\mathlig@let@cs#1#2{\expandafter\let\expandafter#1\csname#2\endcsname}
\def\mathlig@appendcs#1#2{\expandafter\edef\csname#1\endcsname{\csname#1\endcsname#2}}
\def\mathlig#1#2{\mathlig@checklig#1\mathlig@end\mathlig@defcs{mathlig@back@#1}{#2}\ignorespaces}
\def\mathlig@checklig#1#2\mathlig@end{%
 \expandafter\ifx\csname mathlig@forw@#1\endcsname\relax
 \expandafter\mathchardef\csname mathlig@back@#1\endcsname=\mathcode`#1%
 \mathcode`#1"8000\actively\def#1{\csname mathlig@look@#1\endcsname}%
 \mathlig@dolig#1\mathlig@delim
\fi
\mathlig@checksuffix#1#2\mathlig@end
}
\def\mathlig@checksuffix#1#2\mathlig@end{%
\ifx\mathlig@delim#2\mathlig@delim\relax\else\mathlig@checksuffix@{#1}#2\mathlig@end\fi
}
\def\mathlig@checksuffix@#1#2#3\mathlig@end{%
\expandafter\ifx\csname mathlig@forw@#1#2\endcsname\relax\mathlig@dosuffix{#1}{#2}\fi
\mathlig@checksuffix{#1#2}#3\mathlig@end
}
\def\mathlig@dosuffix#1#2{%
\mathlig@appendcs{mathlig@toks@#1}{#2}%
\mathlig@dolig{#1}{#2}\mathlig@delim
}
\def\mathlig@dolig#1#2\mathlig@delim{%
 \mathlig@defcs{mathlig@look@#1#2}{%
 \mathlig@let@cs\mathlig@next{mathlig@forw@#1#2}\futurelet\mathlig@next@tok\mathlig@next}%
 \mathlig@defcs{mathlig@forw@#1#2}{%
  \mathlig@let@cs\mathlig@next{mathlig@back@#1#2}%
  \mathlig@let@cs\checker{mathlig@chck@#1#2}%
  \mathlig@let@cs\mathligtoks{mathlig@toks@#1#2}%
  \expandafter\ifx\expandafter\mathlig@delim\mathligtoks\mathlig@delim\relax\else
  \expandafter\checker\mathligtoks\mathlig@delim\fi
  \mathlig@next
 }%
 \mathlig@defcs{mathlig@toks@#1#2}{}%
 \mathlig@defcs{mathlig@chck@#1#2}##1##2\mathlig@delim{%
  \ifx\mathlig@next@tok##1%
   \mathlig@let@cs\mathlig@next@cmd{mathlig@look@#1#2##1}\let\mathlig@next\mathlig@gobble
  \fi
  \ifx\mathlig@delim##2\mathlig@delim\relax\else
   \csname mathlig@chck@#1#2\endcsname##2\mathlig@delim
  \fi
 }%
%
%
 \ifx\mathlig@delim#2\mathlig@delim\else
  \mathlig@defcs{mathlig@back@#1#2}{\csname mathlig@back@#1\endcsname #2}%
 \fi
}%
\mathchardef\ordinarycolon\mathcode`\:
\def\vcentcolon{\mathrel{\mathop\ordinarycolon}}
\numberwithin{equation}{section}
\theoremstyle{plain}
\newtheorem{theo}{Theorem}[section]
\newtheorem{cor}[theo]{Corollary}
\newtheorem{lem}[theo]{Lemma}
\newtheorem{prop}[theo]{Proposition}
\theoremstyle{definition}
\newtheorem{defn}[theo]{Definition}
\theoremstyle{remark}
\newtheorem*{ex*}{Example}
\newtheorem*{exs*}{Examples}
\newtheorem{rem}[theo]{Remark}
\newtheorem*{rem*}{Remark}
\newtheorem*{rems*}{Remarks}
\newcounter{vremennyj}
\title[Finite rank perturbations]{General Clark model for finite rank perturbations}
\author{Constanze~Liaw}
\address{C.~Liaw: Department of Mathematical Sciences, University of Delaware, 311 Ewing Hall, Newark, DE 19716, USA and CASPER, Baylor University, One Bear Place \#97328,      
 Waco, TX  76798, USA}
\email{liaw@udel.edu}
\author{Sergei~Treil}
\address{S.~Treil: Department of Mathematics, Brown University   
151 Thayer
Str./Box 1917,      
 Providence, RI  02912, USA}
\email{treil@math.brown.edu}
 \thanks{
The work of C.~Liaw is supported by the National Science Foundation DMS-1802682. 
\newline
Work of S.~Treil is supported by the National Science Foundation under the grants DMS-1301579, DMS-1600139. Any opinions, findings and conclusions or recommendations expressed in this material are those of the author and do not necessarily reflect the views of the National Science Foundation. }
\keywords{Finite rank perturbations, Clark theory, dilation theory, functional model, normalized Cauchy transform}
 \subjclass[2010]{44A15, 47A20, 47A55}
\begin{document}

\begin{abstract}
All unitary (contractive) perturbations of a given  unitary operator $U$ by finite rank $d$ operators with fixed range can be parametrized by $(d\times d)$ unitary (contractive) matrices $\Gamma$; this generalizes unitary rank one ($d=1$) perturbations, where the Aleksandrov--Clark family of unitary perturbations is parametrized by the scalars on the unit circle $\T\subset\C$.

For a strict contraction $\Gamma$  the resulting perturbed operator $T\ci \Gamma$ is (under the natural assumption about star cyclicity of the range) a completely non-unitary contraction, so it admits the functional model. 

In this paper we investigate the Clark operator, i.e.~a unitary operator that intertwines $T\ci\Gamma$ (written in the spectral representation of the non-perturbed operator $U$) and its model.  We make no assumptions on the spectral type of the  unitary operator $U$; absolutely continuous spectrum may be present.

We first find a universal representation of the adjoint Clark operator in the coordinate free Nikolski--Vasyunin functional model; the word ``universal'' means that it is valid in any transcription of the model.  This representation can be considered to be  a special version of the vector-valued Cauchy integral operator. 

Combining the theory of singular integral operators with the theory of functional models we derive from this abstract representation 
a concrete formula for the adjoint of the Clark operator  in the Sz.-Nagy--Foia\c s transcription. As in the scalar case the adjoint Clark operator is given by a sum of two terms: one is given by the boundary values of the vector-valued Cauchy transform (postmultiplied by a matrix-valued function) and the second one is just the multiplication operator by a matrix-valued function. 

Finally, we present formulas for the direct Clark operator in the Sz.-Nagy--Foia\c s transcription. 
\end{abstract}

\maketitle

\setcounter{tocdepth}{1}
\tableofcontents

\setcounter{section}{-1}

\section{Introduction}
The contractive (or unitary)  perturbations $U+K$ of a  unitary operator $U$ on a  Hilbert space $H$  by finite rank $d<\infty$ operators $K$ with fixed range are parametrized by the $(d\times d)$ contractive (resp.~unitary) matrices $\Gamma$. Namely, if $\Ran K\subset {\er}$, where ${\er}\subset H$, $\dim {\er}=d$ is fixed, and $\bB:\C^d\to {\er}$ is a fixed unitary operator (which we call the coordinate operator), then $K$ is represented as $K= \bB(\Gamma-\bI\ci{\C^d}) \bB^*U$ where $\Gamma$ is a contraction (resp.~a unitary operator) on $\C^d$. Therefore, all such perturbations with $\Ran K\subset {\er}$ are represented as  $T\ci{\Gamma}= U+ \bB(\Gamma-\bI\ci{\C^d}) \bB^*U$, where $\Gamma $ runs over all $(d\times d)$ contractive  (resp.~unitary)  matrices.  

Recall that $T$ being a \emph{contraction} (contractive) means that $\|T\|\le 1$. 


Focusing on the non-trivial part of the perturbation, we can assume that $\Ran \bB = {\er}$ is a star-cyclic subspace for $U$, i.e.~$H = \overline{\spa}\{U^k {\er}, (U^*)^k{\er}: k\in\Z_+\}.$ Below we will show that star-cyclicity together with the assumption that $\Gamma$ is a pure contraction ensures that the operator $T\ci{\Gamma}$ is what is called a completely non-unitary contraction, meaning that $T\ci \Gamma$ does not have a non-trivial unitary part.  The model theory informs us that such $T\ci{\Gamma}$ is unitarily equivalent to its functional model $\cM_{\te}$, $\te=\te\ci\Gamma$, that is, the compression of the shift operator on the model space $\cK_{\te}$ with the characteristic function $\te=\te\ci\Gamma$ of $T\ci{\Gamma}$.

In this paper we investigate 
the so-called  Clark operator, i.e.~a unitary operator  $\Phi$ that intertwines the contraction $T\ci{\Gamma}$ (in the spectral representation of the unperturbed operator $U$) with its model: $\cM_\te\Phi = \Phi T\ci{\Gamma}$, $\te=\te\ci\Gamma$. The case of rank one perturbations ($d=1$) was treated by D.~Clark when $\theta$ is inner \cite{Clark}, and later by D.~Sarason under the  assumption that $\te$ is an extreme point of the unit ball of $H^\infty$, \cite{SAR}. For finite rank perturbations with inner characteristic matrix-valued functions $\theta$, V.~Kapustin and A.~Poltoratski \cite{KP06} studied boundary convergence of functions in the model space $\cK_\te$. The setting of inner characteristic function corresponds to  the operators $U$ that have purely singular spectrum (no a.c.~component), see e.g.~\cite{DL2013}. 

In \cite{LT15} we completely described the general case  of rank one perturbations  (when the measure can have absolutely continuous part, or equivalently, the characteristic function is not not necessarily inner). 

In the present paper  we extend  the results from \cite{LT15} to finite rank perturbations with general matrix-valued characteristic functions. We first find a universal representation of the adjoint Clark operator, which features a special case of a matrix-valued Cauchy integral operator. By universal we mean that our formula is valid in any transcription of the functional model. 
This representation is a pretty straightforward, albeit more algebraically involved, generalization of the corresponding result from \cite{LT15}; it might look like an ``abstract nonsense'', since it is proved under the assumption that we picked a model operator that  ``agrees'' with the Clark model (more precisely that the corresponding coordinate/parametrizing operators agree). 

However, by careful investigation of the construction of the functional model,  using   the coordinate free Nikolski--Vasyunin model we were able to present a formula giving  the parametrizing operators for the model that agree with given coordinate operators for a general contraction  $T$, see Lemma \ref{l:C-C*}. 
Moreover, for the Sz.-Nagy--Foia\c s transcription of the model we get explicit formulas for the parametrizing operators in terms of the characteristic function, see Lemma \ref{l:C_N-F}; similar formulas can be obtained for other transcriptions of the model.

We also compute the characteristic function of the perturbed operator $T\ci \Gamma$; the formula involves the Cauchy integral of the matrix-valued measure. 

For the Sz.-Nagy--Foia\c s transcription of the model we give a more concrete representation of the adjoint Clark operator in terms of vector-valued Cauchy transform, see Theorem \ref{t-repr3}. This representation looks  more natural when one considers spectral representations of the non-perturbed operator $U$ defined with the help of matrix-valued measures, see Theorem \ref{t:repr04}.



\subsection{Plan of the paper}
In  Section \ref{s-prelims} we set the stage by introducing finite  rank  perturbations and studying some their basic properties. In particular, we discuss the concept of a star-cyclic subspace and find a measure-theoretic characterization for it. 

Main result of Section \ref{s-adjClark} is the universal representation formula for the adjoint Clark operator, see Theorem \ref{t-repr}. In this section we also  introduce the notion of agreement of the coordinate/parametrizing operators and make some preliminary observations about such an agreement. 

Section \ref{s-ModAgree} is devoted to the detailed investigation of the agreement of the coordinate/parametrizing operators. Careful analysis of the construction of the model from the coordinate free point of view of Nikolski--Vasyunin allows us to get for a general contraction $T$ formulas for the parametrizing operators for the model that agree with the coordinate operators, see Lemma \ref{l:C-C*}. 
Explicit formulas (in terms of the characteristic function) are presented for the case of Sz.-Nagy--Foia\c s transcription, see Lemma  \ref{l:C_N-F}. 


The characteristic function $\theta\ci\Gamma$ of the perturbed operator $T\ci \Gamma$ is the topic of Sections \ref{s-charfunc} and \ref{s:PropCharFunct}. Theorem \ref{t-theta} gives a formula for $\theta\ci\Gamma$ in terms of a Cauchy integral of  a matrix-valued measure. In Section \ref{s:PropCharFunct} we show that, similarly to the rank one case, the characteristic functions $\te\ci\Gamma$ and $\te\ci{\OZ}$ are related via a special  linear fractional transformation. Relations between defect functions $\Delta\ci\OZ$ and $\Delta\ci\Gamma$ are also described. 

Section \ref{s-Explanations} contains a brief heuristic overview of what subtle techniques are to come in Sections \ref{s-SIO} and \ref{ss-PhiStarSNF}. 

In Section \ref{s-SIO} we present results about regularizations of  the Cauchy transform, and about uniform boundedness of such generalizations, that we need to get the representation formulas in Section \ref{ss-PhiStarSNF}.

In Section \ref{ss-PhiStarSNF} we give a formula for the adjoint Clark operator in the   Sz.-Nagy--Foia\c s transcription of the model. As in the scalar case the adjoint Clark operator is given by the sum of two terms: one is in essence a vector-valued Cauchy transform (postmultiplied by a matrix-valued function)
, and the second one is just a multiplication operator by a matrix-valued function, see Theorem \ref{t-repr3}. In the case of inner characteristic function (purely singular spectral measure of $U$) the second term disappears, and the adjoint Clark operator is given by what can be considered a matrix-valued analogue of the scalar \emph{normalized Cauchy transform}, see Section \ref{ss:GCT}.   

Section \ref{s:directClark} is devoted to a description of the Clark operator $\Phi$, see Theorem \ref{t:direct Clark}. 

\section{Preliminaries}\label{s-prelims}


Consider the family  of rank $d$ perturbations $U+K$ of a unitary operator $U$ on a separable Hilbert space $H$. If we fix a subspace ${\er}\subset H$, $\dim {\er}=d$ such that $\Ran K\subset {\er}$, then all unitary perturbations of  $U+K$ of $U$ can be parametrized as 
\begin{align}
\label{pert-01}
T= U + (\rx-\bI\ci {\er}) P\ci {\er} U, 
\end{align}
where $\rx$ runs over all possible unitary operators in ${\er}$. 

It is more convenient to factorize the representation of $\rx$ through the fixed space $\fD:=\C^d$ by  picking an isometric operator ${\bB}:\fD
\to H$, $\Ran \bB = {\er}$. 
Then any $\rx$ in \eqref{pert-01} can be  represented as $\rx=\bB \Gamma \bB^*$ where $\Gamma: \fD\to \fD$ (i.e. $\Gamma$ is a $(d\times d)$ matrix). The perturbed operator $T=T\ci\Gamma$ can be rewritten as 
\begin{align}
\label{pert-02}
T= U+ \bB(\Gamma-\bI\ci{\fD}) \bB^*U.
\end{align}
If we decompose the space $H$ treated as the domain as $H= U^*{\er}\oplus (U^*{\er})^\perp$, and the same space treated as the target space as $H={\er}\oplus {\er}^\perp$, then the operator $T$ can be represented with respect to this decomposition as 
\begin{align}
\label{BlDec-01}
T= \left( \begin{array}{cc} \bB\Gamma \bB^*U & 0\\ 0   & T_1 \end{array} \right) ,
\end{align}
where block $T_1$ is unitary. 

From the above decomposition we can immediately see that if $\Gamma$ is a contraction then $T$ is a contraction (and if $\Gamma$ is unitary then $T$ is unitary). 

In this formula we slightly abuse notation, since formally the operator $\bB\Gamma \bB^*U$ is defined on the whole space $H$. However, this operator clearly annihilates $ (U^*{\er})^\perp$, and its range belongs to ${\er}$, so we can restrict its domain and target space to $U^*{\er}$ and ${\er}$ respectively. So when such operators appear in the block decomposition we will assume that its domain and target space are restricted. 

In this paper we assume that the isometry $\bB$ is fixed and that all the perturbations are parametrized by the $(d\times d)$ matrix $\Gamma$.

\subsection{Spectral representation of \texorpdfstring{$U$}{U}}
By the Spectral Theorem the operator $U$ is unitarily equivalent to the multiplication $M_\xi$ by the independent variable $\xi$ in the von Neumann direct integral
\begin{align}
\label{DirInt}
\cH= \int_\T^\oplus E(\xi) \dd \mu(\xi), 
\end{align}
where $\mu$ is a finite Borel measure on $\T$ (without loss of generality we can assume that $\mu$ is a probability measure, $\mu(\T) =1$). 

Let us recall the construction of the direct integral; we present not the most general one, but one that is sufficient for our purposes. Let $E$ be a separable Hilbert space with an orthonormal basis $\{e_n\}_{n=1}^\infty$, and let $N:\T\to \N\cup\{\infty\}$ be a measurable function (the so-called \emph{dimension function}). Define 
\[
E(\xi) = \cspa\{ e_n\in E:1\le n \le N(\xi)\}.
\]
Then the direct integral $\cH$ is the subspace of the $E$-valued space $L^2(\mu;E)=L^2(\T, \mu;E)$ consisting of the functions $f$ such that $f(\xi) \in E(\xi)$ for $\mu$-a.e.~$\xi$.

Note, that the dimension function $N$ and the spectral type $[\mu]$ of $\mu$ (i.e.~the collection of all measures that are mutually absolutely continuous with $\mu$) are spectral invariants of $U$, meaning that they  define operator $U$ up to unitary equivalence. 

So, without loss of generality, we assume that $U$ is the multiplication $M_\xi$ by the independent variable $\xi$ in the direct integral \eqref{DirInt}. 

An important particular case is the case when $U$  is star-cyclic, meaning that there exists a vector $h\in H$ such that $\overline\spa\{U^n h: n\in\Z\}=H$. In this case $N(\xi)\equiv 1$, and the operator $U$ is unitary equivalent to the multiplication operator $M_\xi$  in the  scalar  space $L^2(\mu)=L^2(\T,\mu)$. 

In the representation of $U$ in the direct integral it is convenient to give a ``matrix'' representation of the isometry $\bB$. Namely, for $k=1, 2, \ldots, d$ define functions $b_k\in\cH \subset L^2(\mu;E)$ by $b_k:= \bB e_k $; here $\{e_k\}_{k=1}^d$ is the standard orthonormal basis in $\C^d$. 

In this notation  the operator $\bB$, if we follow the standard rules of the linear algebra is the multiplication by a row  $B$ of vector-valued  functions,  
\[
B(\xi) = (b_1(\xi), b_2(\xi), \ldots, b_d(\xi)).
\] 
If we represent $b_k(\xi)$ in the standard basis in $E$ that we used to construct the direct integral \eqref{DirInt}, then $\bB$ is just the multiplication by the matrix-valued function of size $(\dim E)\times d$. 

\subsection{Star-cyclic subspaces and completely non-unitary contractions} \label{ss-cyccnu}

\begin{defn}\label{d-cyclic}
A subspace ${\er}$  is said to be \emph{star-cyclic} for an operator $T$ on $H$, if
\[
H = \overline{\spa}\{T^k {\er}, (T^*)^k{\er}: k\in\Z_+\}.
\]
\end{defn}
For a  perturbation  (not necessarily unitary) $T=T\ci\Gamma$ of the unitary operator $U$  given by \eqref{pert-02}
the subspace 
\begin{align}
\label{SpanOrbit-01}
\cE=\cspa\{ U^k {\er}, (U^*)^k{\er}: k\in\Z_+\} = \cspa\{ U^k{\er}: k\in\Z\}
\end{align}
is a reducing subspace for both  $U$ and $T\ci\Gamma$ (i.e.~$\cE$ and  $\cE^\perp$ are invariant for both $U$ and $T\ci\Gamma$). 

Since $T\ci \Gamma \bigm|_{\cE^\perp}=U \bigm|_{\cE^\perp}$, the perturbation does not influence the action of $T\ci \Gamma$ on $\cE^\perp$, so nothing interesting for perturbation theory happens on $\cE^\perp$; all action happens on $\cE$. Therefore, we can restrict our attention to $T\ci \Gamma \bigm|_{\cE}$, i.e.~assume without loss of generality that $\er = \Ran \bB$ is a star-cyclic subspace for $U$.

We note that if ${\er}$ is a star-cyclic subspace for $U$ and $\Gamma$ is unitary, then ${\er}$ is also a star-cyclic subspace for all perturbed unitary operators given by \eqref{pert-02}. 

\begin{lem}
\label{l:*cycl}
Let ${\er}=\Ran \bB$ be a star-cyclic subspace for $U$ and let $\Gamma$ be unitary. Then ${\er}$ is also a star-cyclic subspace for all perturbed unitary operators $U\ci\Gamma=T\ci{\Gamma}$ given by \eqref{pert-02}.
\end{lem}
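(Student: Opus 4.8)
The key observation is that star-cyclicity of $\er$ for $U$ means precisely that the smallest reducing subspace for $U$ containing $\er$ is all of $H$; equivalently, in the notation of \eqref{SpanOrbit-01}, $\cE = H$. So we must show that the analogous reducing subspace for $U\ci\Gamma$ — namely $\cspa\{U\ci\Gamma^k\er : k\in\Z\}$ — also equals $H$. First I would record the structural fact that, since $\Gamma$ is unitary, $U\ci\Gamma$ is itself a unitary operator, and it agrees with $U$ off $U^*\er \vee \er$; indeed from the block form \eqref{BlDec-01} one reads off that $U\ci\Gamma$ and $U$ differ only on a finite-dimensional piece. I expect the cleanest route is to compare the von Neumann algebras (or, more elementarily, the reducing-subspace lattices) generated by $U$ together with $P\ci\er$ on one hand, and by $U\ci\Gamma$ together with $P\ci\er$ on the other.

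Here is the concrete mechanism I would use. Let $\cE\ci\Gamma := \cspa\{U\ci\Gamma^k\er : k\in\Z\}$; this is reducing for $U\ci\Gamma$ and, exactly as in the discussion around \eqref{SpanOrbit-01} (with $U$ replaced by the unitary $U\ci\Gamma$), one has $\cE\ci\Gamma = \cspa\{U\ci\Gamma^k\er,\ (U\ci\Gamma^*)^k\er : k\in\Z_+\}$. The point is that $\cE\ci\Gamma$ is also reducing for $U$: indeed, on $\er$ we have $U = U\ci\Gamma \bB\Gamma^*\bB^* $ — more precisely, from \eqref{pert-02} with $\Gamma$ unitary, $U = U\ci\Gamma - \bB(\Gamma-\bI)\bB^*U$, and since $U\ci\Gamma|_{U^*\er} = \bB\Gamma\bB^*U|_{U^*\er}$ is invertible onto $\er$ with inverse expressible through $\bB\Gamma^*\bB^*$, one gets that $U^*\er \subset \er \vee U\ci\Gamma^{-1}\er \subset \cE\ci\Gamma$ and $\er \subset \cE\ci\Gamma$. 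Hence $U^*\er \cup \er \subset \cE\ci\Gamma$; applying $U\ci\Gamma$ (which preserves $\cE\ci\Gamma$) and using that $U$ and $U\ci\Gamma$ act identically on the complement of $U^*\er\vee\er$, one bootstraps to $U^k\er \subset \cE\ci\Gamma$ for all $k\in\Z$. Therefore $H = \cspa\{U^k\er : k\in\Z\} \subset \cE\ci\Gamma$, which forces $\cE\ci\Gamma = H$, i.e.\ $\er$ is star-cyclic for $U\ci\Gamma$. By symmetry (the inverse perturbation, parametrized by $\Gamma^*$, recovers $U$ from $U\ci\Gamma$), the roles of $U$ and $U\ci\Gamma$ are interchangeable, which is a useful sanity check.

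An alternative, perhaps slicker, packaging: a subspace is star-cyclic for $U$ iff it is cyclic for the von Neumann algebra $\W(U)$ generated by $U$. Since $\Gamma$ is unitary, $U\ci\Gamma = U + \bB(\Gamma - \bI)\bB^* U$ where $\bB\bB^* = P\ci\er$, so $U\ci\Gamma \in \W(U, P\ci\er)$; and conversely $U = U\ci\Gamma + \bB(\Gamma^*-\bI)\bB^*U\ci\Gamma \in \W(U\ci\Gamma, P\ci\er)$. Thus $\W(U,P\ci\er) = \W(U\ci\Gamma, P\ci\er)$. Now $\er$ being star-cyclic for $U$ gives $H = \overline{\W(U)\er}$, and since $P\ci\er\er \subset \er$ this equals $\overline{\W(U,P\ci\er)\er} = \overline{\W(U\ci\Gamma,P\ci\er)\er} = \overline{\W(U\ci\Gamma)\er}$, the last step because $P\ci\er \in \W(U\ci\Gamma)$ would need justification — here one uses that $\er$ is a $U\ci\Gamma$-reducing-subspace-generator candidate, or one simply notes $P\ci\er\er\subset\er$ again and that the closed span of $\W(U\ci\Gamma)\er$ is automatically reducing for $U\ci\Gamma$ and contains $\er$, hence contains $\W(U\ci\Gamma,P\ci\er)\er$ as well.

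\textbf{Main obstacle.} The only genuine subtlety is the invertibility/solvability step: extracting $U^*\er$ (equivalently $U\ci\Gamma^{-1}\er$) from finitely many applications of $U\ci\Gamma$ to $\er$. One must be careful that $\Gamma$ unitary makes the corner $\bB\Gamma\bB^*U : U^*\er \to \er$ a bijection (it is the composition of the unitaries $U : U^*\er\to\er$, $\bB^* : \er\to\fD$, $\Gamma : \fD\to\fD$, $\bB : \fD\to\er$), so its inverse is again a bounded operator with range $U^*\er$, and this inverse is realized on the subspace $\cE\ci\Gamma$ by a polynomial-free but explicit expression in $U\ci\Gamma$ and $\bB,\bB^*$. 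Everything else — that the span is reducing, that $U$ and $U\ci\Gamma$ agree off the finite-dimensional corner, that the bootstrap terminates — is routine. I would therefore spend the bulk of the write-up making the solvability step precise and let the rest follow from the already-established remarks surrounding \eqref{SpanOrbit-01} applied to the unitary $U\ci\Gamma$.
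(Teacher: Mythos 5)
Your argument is correct, but it is genuinely different from the paper's. The paper proves the lemma by contradiction via complete non-unitarity: it verifies the algebraic identity $T\ci\bO = U\ci\Gamma + \bB(\bO-\bI\ci\fD)\bB^*U\ci\Gamma$ (using $\Gamma\bB^*U=\bB^*U\ci\Gamma$), notes that Lemma \ref{l-cnu} makes $T\ci\bO$ a c.n.u.\ contraction, and observes that if $\er$ failed to be star-cyclic for $U\ci\Gamma$ then $T\ci\bO$, being a perturbation of the unitary $U\ci\Gamma$ supported on $\er$, would have a nontrivial unitary part --- a contradiction. You instead argue directly that $\cE\ci\Gamma:=\cspa\{U\ci\Gamma^k\er:k\in\Z\}$ is reducing for $U$ and contains $\er$, hence equals $H$. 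Your route is arguably more elementary (it does not pass through the c.n.u.\ machinery), and the ``solvability'' step you worry about can be streamlined: since $U=U\ci\Gamma+\bB(\Gamma^*-\bI)\bB^*U\ci\Gamma$ and $U^*=U\ci\Gamma^*+U\ci\Gamma^*\bB(\Gamma-\bI)\bB^*$, the differences $U-U\ci\Gamma$ and $U^*-U\ci\Gamma^*$ have ranges in $\er$ and $U\ci\Gamma^*\er$ respectively, both contained in $\cE\ci\Gamma$, so invariance of $\cE\ci\Gamma$ under $U$ and $U^*$ is immediate without any careful corner inversion. One caution about your ``slicker packaging'': the claim $U\ci\Gamma\in\W(U,P\ci\er)$ is not justified, since $\bB(\Gamma-\bI)\bB^*$ is an essentially arbitrary operator on $\er$ and need not lie in the von Neumann algebra generated by $U$ and $P\ci\er$; what is actually true (and suffices) is that every $U$-reducing subspace containing $\er$ is $U\ci\Gamma$-reducing and vice versa, which is exactly what your concrete first argument establishes. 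What the paper's approach buys is reuse of Lemma \ref{l-cnu}, which is needed elsewhere anyway; what yours buys is a self-contained, constructive proof.
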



We postpone for a moment a proof of this well-known fact. 
%
%
%

\begin{defn}
\label{d:cnu} A contraction $T$ in a Hilbert space $H$ is called \emph{completely non-unitary}  (c.n.u.~for short) if there is no non-zero reducing subspace on which $T$ acts unitarily. 
\end{defn}

Recall that a contraction is called \emph{strict} if $\|Tx\|<\|x\|$ for all $x\ne\bO$. 
\begin{lem}\label{l-cnu}
If ${\er}=\Ran \bB$ is a star-cyclic subspace for $U$ and $\Gamma$ is a strict contraction, then $T$ defined by \eqref{pert-02} is a c.n.u.~contraction.
\end{lem}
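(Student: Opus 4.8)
The plan is to show that $T=T\ci\Gamma$ has no nontrivial reducing subspace on which it acts unitarily, using the star-cyclicity of $\er$ together with the strictness of the contraction $\Gamma$. First I would recall the structure of the set of vectors that are ``unitary for $T$''. Let $\cU\subset H$ denote the largest reducing subspace of $T$ on which $T$ is unitary; it is a standard fact of Sz.-Nagy--Foia\c s theory that
\[
\cU = \{x\in H:\ \|T^k x\|=\|x\|=\|(T^*)^k x\|\ \text{for all}\ k\ge 0\},
\]
and that $\cU$ reduces $T$. The goal is to prove $\cU=\{\bO\}$.

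The key computation is to identify vectors $x$ with $\|Tx\|=\|x\|$. From the block decomposition \eqref{BlDec-01}, with respect to $H=U^*\er\oplus(U^*\er)^\perp$ in the domain, $T$ acts as $\bB\Gamma\bB^*U$ on $U^*\er$ and as the unitary $T_1$ on $(U^*\er)^\perp$. So for $x=x_0\oplus x_1$ with $x_0\in U^*\er$, $x_1\in(U^*\er)^\perp$, one has $\|Tx\|^2 = \|\bB\Gamma\bB^*U x_0\|^2 + \|x_1\|^2 = \|\Gamma\bB^* Ux_0\|^2 + \|x_1\|^2$, while $\|x\|^2 = \|\bB^*Ux_0\|^2 + \|x_1\|^2$ (using that $U$ and $\bB$ are isometries, and $\bB^*U$ maps $U^*\er$ isometrically onto $\fD$). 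Since $\Gamma$ is a strict contraction, $\|Tx\|=\|x\|$ forces $\bB^*Ux_0=\bO$, i.e. $Ux_0\perp\er$; but $Ux_0\in\er$ as well since $x_0\in U^*\er$, hence $x_0=\bO$. Therefore $\{x:\|Tx\|=\|x\|\}=(U^*\er)^\perp$, and in particular $\cU\subset (U^*\er)^\perp$, which means $\cU\perp U^*\er$, i.e. $U\cU\perp\er$, equivalently $\cU\perp U^*\er$ and also (since $T$ agrees with $U$ on $(U^*\er)^\perp\supset\cU$) $\cU$ reduces $U$ with $T|_\cU=U|_\cU$. Iterating with $T^*$ in place of $T$ (using the analogous block decomposition with respect to $\er\oplus\er^\perp$, where $T^*$ acts as $U^*\bB\Gamma^*\bB^*$ on $\er$) gives $\cU\perp\er$ as well.

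Now I would run the cyclicity argument. Since $\cU$ reduces both $T$ and (as just noted) $U$, and $T|_\cU = U|_\cU$, the subspace $\cU$ is $U$-reducing and $U^{*k}\cU = T^{*k}\cU\subset\cU$, $U^k\cU=T^k\cU\subset\cU$ for all $k$; but we showed $\cU\perp\er$. Hence $\cU\perp U^k\er$ for all $k\in\Z$ because $U^{-k}\cU\subset\cU\perp\er$ implies $\cU\perp U^k\er$. Therefore $\cU\perp \cspa\{U^k\er:k\in\Z\}=\cE$. By the reduction in Section \ref{ss-cyccnu}, star-cyclicity of $\er$ means $\cE=H$, so $\cU\perp H$, giving $\cU=\{\bO\}$. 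Thus $T$ is completely non-unitary.

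The main obstacle — really the only delicate point — is making the bookkeeping in the block decomposition \eqref{BlDec-01} precise, in particular keeping straight the two different orthogonal decompositions of $H$ (the domain decomposition $U^*\er\oplus(U^*\er)^\perp$ versus the target decomposition $\er\oplus\er^\perp$) when one passes between the statements for $T$ and for $T^*$, and verifying carefully that the strictness hypothesis on $\Gamma$ is exactly what rules out nonzero $x_0$. Everything else is the standard fact that the unitary part of a contraction is the set of vectors on which all powers of $T$ and $T^*$ are isometric, together with the elementary observation that a $U$-reducing subspace orthogonal to $\er$ must be orthogonal to all of $\cE$.
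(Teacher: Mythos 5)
Your proposal is correct and follows essentially the same route as the paper: identify $\{x:\|Tx\|=\|x\|\}=(U^{*}\er)^{\perp}$ and $\{x:\|T^{*}x\|=\|x\|\}=\er^{\perp}$ from the block decomposition \eqref{BlDec-01} using strictness of $\Gamma$, observe that $T$ and $T^{*}$ agree with $U$ and $U^{-1}$ there, and conclude that any reducing subspace on which $T$ is unitary is $U$-reducing and orthogonal to all $U^{k}\er$, contradicting star-cyclicity. The only cosmetic difference is that you work with the maximal unitary part $\cU$ (invoking the canonical decomposition) while the paper argues by contradiction with an arbitrary unitarily-reducing subspace.
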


\begin{proof}
Since $\Gamma$ is a strict contraction,  
we get that   ${\bB}\Gamma {\bB}^*U|\ci{U^*{\er}}$ is also a strict contraction. Therefore  \eqref{BlDec-01} implies that 
\begin{align*}
\|Tx\| = \|x\| \quad&\Longleftrightarrow\quad x\perp U^{-1} {\er}\\
\|T^*x\| = \|x\| \quad&\Longleftrightarrow\quad x\perp {\er}.
\end{align*}
Moreover, we can see from \eqref{BlDec-01} that if $x\perp U^{-1}{\er}$ then $Tx= Uf$ and if $x\perp {\er}$ then $T^*x = U^{-1}x$.

Consider a reducing subspace $G$ for $T$ such that $T|\ci{G}$ is unitary. Then the above observations imply $G\perp {\er}$ and $G\perp U^{-1}{\er}$, and that   
for any $x\in G$
\begin{align*}
T^n x = U^n x \qquad\text{as well as}\qquad
\left(T^*\right)^n x = U^{-n}x.
\end{align*}

Since $G$ is a reducing subspace for  $T$ it follows that $U^k x\in G$ for all integers $k$. But this implies that $U^nx\perp {\er}$, or equivalently $x\perp U^n {\er}$  for all $n\in\Z$. But ${\er}$ is a star-cyclic subspace for $U$, so we get a contradiction. 
%
\end{proof}

\begin{proof}[Proof of Lemma \ref{l:*cycl}] 
Assume now that for unitary $\Gamma$, the subspace $\Ran\bB$ is not a star-cyclic subspace for $U\ci\Gamma= T\ci\Gamma$ (but is a star-cyclic subspace for $U$).  Consider the perturbation 
$T_\bO$ 
\[
T\ci\bO = U + \bB(\bO-\bI\ci\fD)\bB^*U. 
\] 
We will show that 
\begin{align}
\label{ChPert-01}
T\ci\bO = U\ci \Gamma + \bB(\bO- \bI\ci\fD)\bB^*U\ci\Gamma
\end{align}
By Lemma \ref{l-cnu} the operator $T\ci\bO$ is a c.n.u.~contraction. 

But, as we discussed in the beginning of this subsection, if $\Ran \bB$ is not star-cyclic for $U$, then for $\cE$ defined by \eqref{SpanOrbit-01} the subspace $\cE^\perp$ is a reducing subspace for $T\ci\Gamma$ (with any $\Gamma$) on which $T\ci\Gamma$ acts unitarily.  

Since by \eqref{ChPert-01} the operator $T_\bO$ is a perturbation of form \eqref{pert-02} of the unitary operator $T\ci\Gamma$, we conclude that the operator $T\ci\bO$ has a non-trivial unitary part, and arrive to a contradiction.  

To prove \eqref{ChPert-01} we notice that 
\begin{align}
\label{ChPert-02}
T\ci\bO= U-\bB\bB^* U = U\ci\Gamma - \bB\Gamma \bB^*U. 
\end{align}
Direct computations show that 
\begin{align*}
U\ci\Gamma U^*\bB = UU^*\bB + \bB(\Gamma-\bI\ci\fD) \bB^*UU^*\bB = \bB + \bB (\Gamma-\bI\ci\fD) =\bB\Gamma. 
\end{align*}
Taking the adjoint of this identity we get that $\bB^*UU^*\ci\Gamma = \bB^*\Gamma^*$, and so $\Gamma \bB^*U = \bB^*U\ci\Gamma$. Substituting $\bB^*U\ci\Gamma$ instead of $\Gamma\bB^*U$ in \eqref{ChPert-02} we get \eqref{ChPert-01}.
\end{proof}

\subsection{Characterization of star-cyclic subspaces}

Recall that for an isometry $\bB:\cD \to \cH$ (where $\cH$ is the direct integral \eqref{DirInt}) we denoted by  $b_k\in\cH$ the ``columns'' of $\bB$,  
\[
b_k=\bB e_k,
\]
where $e_1, e_2, \ldots, e_d$ is the standard basis in $\C^d$. 

\begin{lem}
\label{l:cycl-02} Let $U$ be the multiplication $M_\xi$ by the independent variable $\xi$ in the direct integral $\cH$ given by \eqref{DirInt}, and let  $\bB:\C^d\to \cH$ be as above. The space $\Ran \bB=\spa\{b_k:1\le k\le d\}$ is star-cyclic for $U$ if and only if $\cspa\{b_k(\xi) : 1\le k \le d\} = E(\xi)$ for $\mu$-a.e.~$\xi$.
\end{lem}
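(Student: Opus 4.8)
The plan is to prove the two implications separately, using the description of the direct integral $\cH$ as a subspace of $L^2(\mu;E)$ together with the characterization \eqref{SpanOrbit-01} of the star-cyclic hull. The key observation is that multiplication operators commute with pointwise operations: if $p$ is a trigonometric polynomial (more generally a bounded measurable function), then $(p\cdot b_k)(\xi) = p(\xi)b_k(\xi)$, so the values of functions in $\cspa\{U^k \er : k\in\Z\}$ at a fixed point $\xi$ are controlled by the values $b_k(\xi)$. Set $F(\xi):=\cspa\{b_k(\xi): 1\le k\le d\}$, a measurable field of subspaces of $E(\xi)$.

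First I would prove the ``only if'' direction by contraposition: if $F(\xi)\subsetneq E(\xi)$ on a set $\sigma\subset\T$ of positive $\mu$-measure, then the field of subspaces $E(\xi)\ominus F(\xi)$ is nontrivial on $\sigma$ and (after a standard measurable-selection argument, or by noting that $N(\xi)$ is finite $\mu$-a.e.\ on the support of the $b_k$'s since $\dim\cspa\{b_k(\xi)\}\le d$) carries a nonzero $L^2(\mu;E)$-function $h$ supported on $\sigma$ with $h(\xi)\in E(\xi)\ominus F(\xi)$. Then for every trigonometric polynomial $p$ and every $k$, $\langle p\cdot b_k, h\rangle = \int_\T p(\xi)\langle b_k(\xi), h(\xi)\rangle\,d\mu(\xi) = 0$, so $h\perp \cspa\{U^k\er : k\in\Z\}=\cE$, and since $h\ne 0$ this shows $\er$ is not star-cyclic.

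Conversely, for the ``if'' direction, suppose $F(\xi)=E(\xi)$ for $\mu$-a.e.\ $\xi$, and let $h\in\cH$ be orthogonal to $\cE=\cspa\{U^k\er:k\in\Z\}$. Then $\int_\T \xi^n \langle b_k(\xi), h(\xi)\rangle\, d\mu(\xi)=0$ for all $n\in\Z$ and all $k$; since $\langle b_k(\fdot), h(\fdot)\rangle\in L^1(\mu)$, the vanishing of all its Fourier coefficients (with respect to $\mu$) gives $\langle b_k(\xi), h(\xi)\rangle = 0$ for $\mu$-a.e.\ $\xi$. Taking the common exceptional null set over $k=1,\dots,d$, we get $h(\xi)\perp F(\xi)=E(\xi)$ $\mu$-a.e., but $h(\xi)\in E(\xi)$ $\mu$-a.e., so $h(\xi)=0$ $\mu$-a.e., i.e.\ $h=0$. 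Hence $\cE=\cH$ and $\er$ is star-cyclic.

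The main obstacle is the measurable-selection point in the ``only if'' direction: one must produce an honest $L^2(\mu;E)$-section of the orthogonal complement field $\xi\mapsto E(\xi)\ominus F(\xi)$ on the bad set. This is routine given that $\dim F(\xi)\le d<\infty$ (so one can Gram--Schmidt the $b_k(\xi)$ measurably and pick a basis vector of $E(\xi)$ not in the span), but it is the one place where measurability has to be handled with care rather than waved through; everything else reduces to the elementary fact that a finite Borel measure on $\T$ is determined by its moments $\int\xi^n\,d\mu$.
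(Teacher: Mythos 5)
Your proof is correct and follows essentially the same route as the paper's: both directions come down to the observation that a function $h$ orthogonal to $\cspa\{U^l\,\Ran\bB : l\in\Z\}$ has all moments of $\langle b_k(\cdot),h(\cdot)\rangle\,d\mu$ vanishing, hence $h(\xi)\perp\cspa\{b_k(\xi)\}$ $\mu$-a.e. The only real difference is that the paper sidesteps your measurable-selection step in the ``only if'' direction by stratifying on $\dim E(\xi)$ and exhibiting $\1\ci{A_n}e$ (a fixed vector $e$ on a positive-measure set) as an element of $\cH$ that is not in the cyclic hull, rather than constructing a section orthogonal to it; also note your parenthetical claim that $N(\xi)$ is finite on the support of the $b_k$ does not follow from $\dim\cspa\{b_k(\xi)\}\le d$, but it is not needed for your main argument.
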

\begin{proof}
First assume that $\Ran \bB$ is not a star-cyclic subspace for $U$. Then there exists $f\in \cH\subset L^2(\mu;E)$, $f\ne 0$ $\mu$-a.e., such that
\begin{align*}
U^l f\perp b_k
\qquad\text{for all }l\in \Z, \text{ and } k=1,\hdots,d, 
\end{align*}
or, equivalently
\begin{align*}
\int_\T \Bigl( f(\xi) , {b_k(\xi)}\Bigr)\ci E \xi^l \dd \mu(\xi) = 0 \qquad\text{for all }l\in \Z, \text{ and } k=1,\hdots,d. 
\end{align*}
But that means for all $k=1, 2, \ldots, d$ we have
\begin{align*}
\Bigl( f(\xi),  b_k(\xi) \Bigr)\ci E= 0 \qquad \mu\text{-a.e.}, 
\end{align*}
so on some set of positive $\mu$ measure (where $f(\xi)\ne \bO$) we have 
\begin{align}
\label{Span-ne-E}
\cspa\{b_k(\xi): 1\le k\le d\}\subsetneqq E(\xi).
\end{align}

Vice versa, assume that \eqref{Span-ne-E} holds on some Borel subset $A\subset\T$ with $\mu(A)>0$. 
For $n=1, 2, \ldots, \infty$ define sets
$
A_n:=\{\xi\in A: \dim E(\xi)=n\}
$.
Then $\mu(A_n)>0$ for some $n$. Fix this $n$ and denote the corresponding space $E(\xi)$, $\xi\in A_n$ by $E_n$. 

We know that $\cspa\{b_k(\xi): 1\le k\le d\}\subsetneqq E_n$ on $A_n$, so there exists $e\in E_n$ such that 
\[
e\notin \cspa\{b_k(\xi): 1\le k\le d\}
\]
on a set of positive measure in $A_n$. 

Trivially, if $f\in \cspa\{U^k \Ran\bB: k\in\Z\}$ then 
\[
f(\xi)\in \cspa\{b_k(\xi): 1\le k\le d\}\qquad \mu\text{-a.e.}, 
\]
and therefore $f=\1\ci{A_n}e$ is not in $\cspa\{U^k \Ran\bB: k\in\Z\}$. 
\end{proof}

\subsection{The case of star-cyclic \texorpdfstring{$U$}{U}}
If $U$ is star-cyclic (i.e.~it has a one-dimensional star-cyclic subspace/vector), $U$ is unitarily equivalent to the multiplication operator $M_\xi$ in the scalar space $L^2(\mu)$; of course the scalar space $L^2(\mu)$ is a particular case of the direct integral, where all spaces $E(\xi) $  are one-dimensional. 

In our general vector-valued case, Lemma \ref{l:cycl-02} says that $\Ran \bB$ is star-cyclic for $U$ if and only if there is no measurable set $A$, $\mu(A)>0$, on which all the functions $b_k$ vanish. 
So, we know that $U$ has a star-cyclic vector. Here we ask the question:
\begin{center}
\emph{Does operator $U$ have a star-cyclic vector that belongs to a prescribed (finite-dimensional)  star-cyclic subspace?}
\end{center}

The following lemma answers ``yes" to that question. Moreover, it implies that if $\Ran \bB$ is star-cyclic for $U=M_\xi$ on the scalar-valued space $L^2(\mu)$, then almost all vectors  $b\in \Ran \bB$ are star-cyclic for $U$. As the result is measure-theoretic in nature, we formulate it in a general context.

\begin{lem}
\label{l:cycl-03}
Consider a $\sigma$-finite scalar-valued measure $\tau$ on  a measure space $\cX$. Let $b_1, b_2, \ldots, b_d\in L^2(\tau)$ be such that 
\begin{align*}
\sum_{k=1}^d |b_k| \ne 0 \qquad\tau\text{-a.e.}
\end{align*}
Then for almost all (with respect to the Lebesgue measure) $\alpha=(\alpha_1, \alpha_2, \ldots, \alpha_d)\in\C^d$ we have
\begin{align*}
\sum_{k=1}^d \alpha_k b_k \ne 0 \qquad\tau\text{-a.e.~on }\cX.
\end{align*}
\end{lem}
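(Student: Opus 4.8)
The plan is to reduce the statement to a countable union of null sets in $\C^d$ by working over a $\sigma$-finite exhaustion of $\cX$ and then invoking a Fubini-type argument. First I would write $\cX = \bigcup_{m} \cX_m$ with $\tau(\cX_m) < \infty$ and $\cX_m \nearrow \cX$; it suffices to show that for each $m$ the set of ``bad'' coefficient vectors $\alpha$ for which $\sum_k \alpha_k b_k = 0$ on a subset of $\cX_m$ of positive $\tau$-measure is Lebesgue-null in $\C^d$, since a countable union of null sets is null. So from now on I may assume $\tau(\cX) < \infty$.

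The core of the argument is the following observation. Consider the product measure space $\cX \times \C^d$ equipped with $\tau \times \lambda$, where $\lambda$ is Lebesgue measure on $\C^d \cong \R^{2d}$ (or, to stay $\sigma$-finite, $\tau \times \lambda|_{B_R}$ for balls $B_R$, then let $R \to \infty$). Look at the set
\[
Z = \Bigl\{ (x,\alpha) \in \cX \times \C^d : \sum_{k=1}^d \alpha_k b_k(x) = 0 \Bigr\}.
\]
This is measurable (it is the zero set of a jointly measurable function). For each fixed $x$ with $\sum_k |b_k(x)| \neq 0$ — which holds for $\tau$-a.e.\ $x$ by hypothesis — the slice $Z_x = \{\alpha : \sum_k \alpha_k b_k(x) = 0\}$ is a proper complex-linear subspace of $\C^d$ (the orthogonal complement of the nonzero vector $\overline{(b_1(x),\dots,b_d(x))}$), hence has real dimension at most $2d-2$ and therefore $\lambda(Z_x) = 0$. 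By Tonelli's theorem applied to the indicator of $Z$, it follows that $(\tau \times \lambda)(Z) = 0$, and then, integrating in the other order, for $\lambda$-a.e.\ $\alpha \in \C^d$ the slice $Z^\alpha = \{x : \sum_k \alpha_k b_k(x) = 0\}$ has $\tau(Z^\alpha) = 0$. That is exactly the claim: for a.e.\ $\alpha$, $\sum_k \alpha_k b_k \neq 0$ $\tau$-a.e.

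The main obstacle — really the only point requiring care — is the $\sigma$-finiteness bookkeeping needed to legitimately apply Tonelli, since Lebesgue measure on $\C^d$ is not finite and $\tau$ is only $\sigma$-finite; this is handled by the exhaustion $\cX = \bigcup_m \cX_m$ together with $\C^d = \bigcup_R B_R$ and the fact that a countable union of $\lambda$-null sets is $\lambda$-null. A secondary routine point is verifying that $Z$ is product-measurable: the map $(x,\alpha) \mapsto \sum_k \alpha_k b_k(x)$ is a finite sum of products of a measurable function of $x$ with a continuous function of $\alpha$, hence jointly measurable, so $Z$ is its preimage of $\{0\}$. Everything else is the elementary linear algebra that a proper subspace of $\C^d$ is Lebesgue-null in $\R^{2d}$.
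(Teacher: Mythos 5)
Your proof is correct, but it takes a genuinely different route from the paper's. The paper argues by induction on $d$: having fixed a good truncated coefficient vector $\alpha'=(\alpha_1,\dots,\alpha_n)$, it observes that the sets $\cX_\beta=\{x:\ b(\alpha',x)+\beta b_{n+1}(x)=0\}$ are pairwise disjoint, so in a finite measure space only countably many of them can carry positive $\tau$-measure; Tonelli is then invoked only in the single last variable $\alpha_{n+1}$ to assemble the induction step. You instead apply Tonelli once to the full product $\cX\times\C^d$, using the key observation that for $\tau$-a.e.\ fixed $x$ the slice $Z_x$ is the kernel of the nonzero linear functional $\alpha\mapsto\sum_k\alpha_k b_k(x)$, hence a complex hyperplane of real dimension $2d-2$ and Lebesgue-null. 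Your route is shorter, avoids the induction entirely, and transfers verbatim to the real case mentioned in the paper's remark, since $Z_x\cap\R^d$ is a proper real subspace of $\R^d$. What the paper's argument buys is a slightly finer conclusion (for each fixed good $\alpha'$ the exceptional set of $\alpha_{n+1}$ is countable, not merely null) and independence from the geometric fact that proper subspaces are Lebesgue-null, at the cost of length. One small simplification available to you: since $\tau$ is $\sigma$-finite by hypothesis and Lebesgue measure on $\C^d$ is $\sigma$-finite, Tonelli applies directly to $\tau\times\lambda$, so the exhaustions by the sets $\cX_m$ and by the balls $B_R$ are not actually needed; your joint-measurability check for $Z$ is the only genuine prerequisite, and you have handled it correctly.
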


\begin{rem*} 
 The above lemma also holds for almost all $\alpha\in\R^d$.
\end{rem*}

\begin{proof}[Proof of Lemma \ref{l:cycl-03}]
Consider first the case   $\tau(\cX)<\infty$.

We proceed by induction in $d$. Clearly, if $|b_1|\neq 0$ $\tau$-a.e.~on $\cX$, then $\alpha b_1\neq 0$ $\tau$-a.e.~on $\cX$ for all $\alpha\in \C\setminus\{0\}$.

Now assume the statement of the Lemma for $d=n$ for some $n\in \N$. Deleting a set of $\tau$-measure $0$, we can assume that  $\sum_{k=1}^{n+1} |b_k| \ne 0$ on $\cX$. 

Let $\cY:=\{x\in\cX: \sum_{k=1}^n |b_k(x)|>0\}$. 
By the induction assumption for almost all $\alpha'=(\alpha_1, \alpha_2, \ldots, \alpha_n)$ 
\begin{align*}
b(\alpha',x) : =\sum_{k=1}^n \alpha_k b_k(x) \ne 0 \qquad {on\ } \cY. 
\end{align*}
Fix $\alpha'=(\alpha_1, \alpha_2, \ldots, \alpha_n)$ such that $b(\alpha', x) \ne 0$ on $\cY$.  We will show that for any such fixed $\alpha'$ the measure
\begin{align}
\label{CountMany-alpha}
\tau\left(\left\{x\in\cX: \sum_{k=1}^{n+1} \alpha_k b_k(x) =0\right\} \right)>0
\end{align}
only for countably many values of $\alpha_{n+1}$.

To show that define for $\beta=\alpha_{n+1} \in \C$ the set
\[
\cX_{\beta}:=\left\{x\in \cX:b(\alpha',x)+\beta b_{n+1}(x)=0\right\}.
\]

Let $\widetilde\beta \in \C\setminus\{0\}$, $\wt\beta\ne \beta$. We claim that the sets $X_\beta$ and $X_{\widetilde\beta}$ are disjoint. 

Indeed, the assumption that $\sum_{k=1}^{n+1} |b_k|>0$ implies that $b_{n+1}\ne 0$ on $\cX\setminus\cY$, so $\cX_\beta, \cX_{\wt\beta}\in\cY$. Moreover, solving for $b_{n+1}$ we get that if $\beta\ne0$, then 
\begin{align*}
\cX_\beta= \{x\in\cY: b_{n+1}(x) = -b(\alpha', x)/\beta\}, 
\end{align*}
and similarly for $\cX_{\wt\beta}$. Since $b(\alpha',x)\ne0$ on $\cY$, we get that 
\begin{align*}
b(\alpha', x)/\beta \ne b(\alpha', x)/\wt\beta\qquad \forall x\in\cY, 
\end{align*}
so if $\beta\ne0$, then $\cX_\beta$ and $\cX_{\wt\beta} $ are disjoint as preimages of disjoint sets (points). 

If $\beta=0$, then $\cX_0=\cX\setminus\cY$, so the sets $\cX_{\wt\beta}$ and $\cX_0$ are disjoint.

The set $\cX$ has finite measure, and $\cX$ is the union of disjoint sets $\cX_\beta$, $\beta\in \C$. So, only countably many sets $\cX_\beta$ can satisfy $\tau(\cX_\beta)>0$. 
We have proved the lemma for $\tau(\mathcal X)<\infty$.

The rest can be obtained by Tonelli's theorem. Namely, define
\[
\cA:= \left\{(x,\alpha): x\in\cX, \alpha\in\C^{n+1} , \sum_{k=1}^{n+1}\alpha_k b_k(x) =0\right\}
\]
and let $F=\1\ci\cA$. From the Tonelli Theorem we can see that 
\begin{align}
\label{Int1_A}
\int\1\ci\cA (x,\alpha) \dd m(\alpha) \dd\tau(x)>0
\end{align}
if and only if for the set of $\alpha\in\C^{n+1}$ of positive Lebesgue measure
\[
\tau\left( \left\{x\in\cX: \sum_{k=1}^{n+1}\alpha_k b_k(x) =0 \right\}\right) >0 .
\]
It follows from \eqref{CountMany-alpha} that for almost all $\alpha'=(\alpha_1, \alpha_2, \ldots, \alpha_n)\in\C^n$
\[
\int\1\ci\cA (x,\alpha', \alpha_{n+1}) \dd m(\alpha_{n+1}) \dd\tau(x) = 0,
\]
so, by Tonelli, the integral in \eqref{Int1_A} equals $0$. 
%
\end{proof}

\section{Abstract formula for the adjoint Clark operator}\label{s-adjClark}

In this section we introduce necessary known facts about functional models and then give a general abstract formula for the adjoint Clark operator.  To do this we need a new notion of coordinate/parametrizing operators for the model and their agreement: the abstract representation formula (Theorem   \ref{t-repr}) holds under the assumption that the coordinate operators $\bC$ and $\bC_*$ agree with the Clark model. 

Later in Section  \ref{s-ModAgree} we construct the coordinate operators that agree with the Clark, and in Section \ref{s-charfunc} we compute the characteristic function, so the abstract Theorem  \ref{t-repr} will give us concrete, albeit complicated formulas.  


\subsection{Functional models
}
\begin{defn}
\label{d:defects}
Recall that for a contraction $T$ its \emph{defect operators} $D\ci T$ and $D\ci{T^*}$ are defined as 
\begin{align*}
D\ci{T} := (\bI -T^*T)^{1/2}, \qquad D\ci{T^*}:= (\bI-TT^*)^{1/2}. 
\end{align*}
The \emph{defect spaces} $\fD\ci T$ and $\fD\ci{T^*}$ are defined as 
\begin{align*}
\fD\ci{T}:=\clos\Ran D\ci T, \qquad  \fD\ci{T^*}:=\clos\Ran D\ci{T^*}. 
\end{align*}
\end{defn}
The characteristic function is an (explicitly computed from the contraction $T$) op\-erator-valued function $\theta\in H^\infty(\fD\tto \fD_*)$, where $\fD$ and $\fD_*$ are Hilbert spaces of appropriate dimensions, 
\[
\dim \fD=\dim \fD\ci{T}, \qquad \dim \fD_*=\dim \fD\ci{T^*}\,.
\]

Using the characteristic function $\theta$ one can then construct the so-called \emph{model space} $\cK_\theta$, which is a subspace of a weighted $L^2$ space $L^2(\T, W; \fD_*\oplus\fD)= L^2(W; \fD_*\oplus\fD)$ with an operator-valued weight $W$. The model operator $\cM_\theta:\cK_\theta \to \cK_\theta$ is then defined as the \emph{compression} of the multiplication $M_z$ by the independent variable $z$, 
\begin{align*}
\cM_\theta f = P\ci{\cK_\theta} M_z f, \qquad f\in\cK_\theta; 
\end{align*}
here $M_z f(z) =zf(z)$. 

Let as remind the reader, that the norm in the weighted space $L^2(\T,W;H)$ with an operator weight $W$ is given by 
\[
\|f\|\ci{L^2(W;H)}^2 = \int_\T (W(z) f(z), f(z) )\ci H \dd m(z) ;
\]
in the case $\dim H=\infty$ there are some technical details, 
but in the finite-dimensional case considered in this paper everything is pretty straightforward.

The best known example of a model is  the Sz.-Nagy--Foia\c{s} (transcription of a) model,  \cite{SzNF2010}. The Sz.-Nagy--Foia\c{s} model space $\cK_\te$ is a subspace of a non-weighted space $L^2(\fD_*\oplus\fD)$ (the weight $W\equiv\bI$), 
given by
\begin{align*}
\cK_\te
:=
\begin{pmatrix}H^2(\fD_*)\\\clos\Delta L^2(\fD)\end{pmatrix}
\ominus \begin{pmatrix}\theta\\\Delta\end{pmatrix} H^2(\fD),
\end{align*}
where
\[
\Delta(z) := (\OID_\mathfrak{D}-\theta^\ast(z)\theta(z))^{1/2}
\quad
\text{and}\quad
\begin{pmatrix}\theta\\\Delta\end{pmatrix} H^2(\fD)= 
\left\{ \left(\begin{array}{c}\theta f \\ \Delta f\end{array}\right) : f\in H^2(\fD) \right\}.
\]

In literature, the case when the vector-valued characteristic function $\theta$ is \emph{inner} (i.e.~its boundary values are isometries for a.e.~$z\in\T$) is often considered. Then $\Delta(z) = \OZ$ on $\T$, so in that case the second component of $\cK_\te$ collapses completely and the Sz.-Nagy--Foia\c{s} model space reduces to the familiar space
\[
\cK_\te
=
H^2(\fD_*)\ominus \theta H^2(\fD).
\]

Also, in the literature, cf \cite{SzNF2010}, the characteristic function is defined up to multiplication by constant unitary factors from the right and from the left. Namely, two functions $\te\in H^\infty(\fD\to\fD_*)$ and  $\wt\te \in H^\infty(\wt\fD\to\wt\fD_*)$ are equivalent if there exist unitary operators $U:\fD\to\wt\fD$ and $U_*:\fD_*\to\wt\fD_*$ such that $\wt\te = U_*\te U^*$. 

It is a well-known fact, cf \cite{SzNF2010}, that two c.n.u.~contractions are unitarily equivalent if and only if their characteristic functions are equivalent as described above. So, usually in the literature the characteristic function was understood as the corresponding equivalence class, or an arbitrary representative in this class. However, in this paper, to get  correct formulas it is essential to track which representative is chosen.

%
%
%
%

\subsection{Coordinate operators, parameterizing operators, 
and their agreement 
}
\label{s:agree-01}

Let $T:H\to H$ be a contraction, and let $\fD$, $\fD_*$ be Hilbert spaces, $\dim \fD= \dim\fD\ci T$, $\dim \fD_* = \dim \fD\ci{T^*}$. Unitary operators $V:\fD\ci{T}\to \fD$ and $V_*:\fD\ci{T^*}\to \fD_*$ will be called \emph{coordinate operators} for the corresponding defect spaces; the reason for that name is that often spaces $\fD$ and $\fD_*$ are spaces with a fixed orthonormal basis (and one can introduce coordinates there), so the operators introduce coordinates on the defect spaces. 

The inverse operators $V^*:\fD\to \fD\ci{T}$ and $V_*^*:\fD_*\to \fD\ci{T^*}$ will be called \emph{parameterizing} operators. For a contraction $T$ we will use symbols $V$ and $V_*$ for the coordinate operators, but for its model $\cM_\te$ the parametrizing operators will be used, and we  reserve letters $\bC$ and $\bC_*$ for these operators. 

Let $T$ be a c.n.u.~contraction with characteristic function $\theta\in H^\infty(\fD\tto \fD_*)$, and let $\cM_\theta: \cK_\theta\to \cK_\theta$ be its model. Let also $V:\fD\ci{T}\to \fD$ and $V_*:\fD\ci{T^*}\to \fD_*$ be coordinate operators for the defect spaces of $T$, and 
$\bC:\fD\ci{\cM_\theta}\to \fD$ and $\bC_*:\fD\ci{\cM_\theta^*}\to \fD_*$ be the parameterizing operators for the defect spaces of $\cM_\theta$ (this simply means that all 4 operators are unitary). 

We say that the operators $V$, $V_*$ agree with operators $\bC$, $\bC_*$ if there exists a unitary operator $\Phi:\cK_\theta\to H$ intertwining $T$ and $\cM_\theta$, 
\[
T\Phi = \Phi \cM_\theta,
\] 
and such that 
\begin{align}
\label{agree-01}
\bC^* = V\Phi\Bigm|_{\fD\ci{\cM_\theta}}, \qquad \bC_*^* = V_*\Phi\Bigm|_{\fD\ci{\cM_\theta^*}}\,.
\end{align}
The above identities simply mean that the diagrams below are commutative. 
\begin{center}
\begin{tikzpicture}[>=angle 90]
\matrix(a)[matrix of math nodes,
row sep=2.5em, column sep=2.5em,
text height=1.5ex, text depth=0.25ex]
{D\ci{T}&&\fD&\fD_*&&D\ci{T^*}\\
&&&&&\\
&\fD\ci{\cM_\te}&&&\fD\ci{\cM_\te^*}&\\};
\path[->](a-1-1) edge node[auto] {$V$} (a-1-3);
\path[->](a-1-4) edge node[auto] {$ V_*^*$} (a-1-6);
\path[->](a-3-2) edge node[auto] {$\Phi$} (a-1-1);
\path[<-](a-1-3) edge node[auto] {$\bC$} (a-3-2);
\path[<-](a-1-6) edge node[auto] {$\Phi$} (a-3-5);
\path[->](a-3-5) edge node[auto] {$\bC_*$} (a-1-4);
\end{tikzpicture}
\end{center}

In this paper, when convenient, we always extend an operator between subspaces to the operator between the whole spaces, by extending it by $0$ on the orthogonal complement of the domain;
slightly abusing notation we will use the same symbol for both operators. Thus a unitary operator between subspaces $E$ and $F$ can be treated as a partial isometry with initial space $E$ and final space $F$, and vice versa.  With this agreement \eqref{agree-01} can be rewritten as 
\begin{align*}
\bC^* = V\Phi, \qquad \bC_*^* = V_*\Phi . 
\end{align*}

\subsection{Clark operator} Consider a contraction $T$ given by \eqref{pert-02} with $\Gamma$ being a strict contraction. We also assume that $\Ran \bB$ is a star-cyclic subspace for $U$, so $T$ is a c.n.u.~contraction, see Lemma \ref{l-cnu}.

We assume that $U$ is given in its spectral representation, so $U$ is the multiplication operator $M_\xi$ in the direct integral $\cH$. 


A Clark operator $\Phi:\cK_\theta\to \cH$ is a unitary operator, intertwining this special contraction $T$  and its model $\cM_\theta$, $\Phi\cM_\theta = T\Phi$, or equivalently
\begin{align}
\label{e-intertwinePhi*}
\Phi^* T
=
 \cM_\te \Phi^*.
\end{align}
We name it so after D.~Clark, who in \cite{Clark} described it for rank one perturbations of unitary operators with purely singular spectrum. 

We want to describe the operator $\Phi$ (more precisely, its adjoint $\Phi^*$) in our situation.  
In our case, $\dim \fD\ci{T} = \dim \fD\ci{T^*}=d$, and it will be convenient for us to consider models with $\fD=\fD_*=\C^d$. 

As it was discussed above, it can be easily seen from the representation \eqref{BlDec-01} that the operators    $U^*\bB:\fD=\C^d\to \fD\ci{T}$ and $\bB:\fD=\C^d\to \fD\ci{T^*}$ are unitary operators canonically (for our setup) identifying $\fD$ with the corresponding defect spaces, i.e.~the canonical parameterizing operators for these spaces. The  corresponding coordinate operators are given by $V=\bB^* U$, $V_*=\bB^*$. 


We say that parametrizing operators $\bC:\fD\to \fD\ci{\cM_\theta}$, $\bC_* : \fD\to \fD\ci{\cM_\theta^*}$ \emph{agree} with the Clark model, if the above coordinate operators $V=\bB^* U$, $V_*=\bB^*$ agree with the parametrizing operators $\bC$, $\bC_*$ in the sense of Subsection \ref{s:agree-01}. In other words, they agree
if there exists a Clark operator $\Phi$ such that the following diagram commutes. 

\begin{equation}
\label{d:agree-02}
\begin{tikzpicture}[>=angle 90, baseline={([yshift=-.5ex]current bounding box.center)}]
\matrix(a)[matrix of math nodes,
row sep=2.5em, column sep=2.5em,
text height=1.5ex, text depth=0.25ex]
{\fD\ci{T}&&\fD = \C^d&&\fD\ci{T^*}\\
&&&&\\
&\fD\ci{\cM_\theta}&&\fD\ci{\cM_\theta^*}&\\};
\path[->](a-1-1) edge node[auto] {$\bB^* U$} (a-1-3);
\path[->](a-1-3) edge node[auto] {$ \bB$} (a-1-5);
\path[<-](a-3-2) edge node[auto] {$\Phi^*$} (a-1-1);
\path[->](a-1-3) edge node[auto] {$\bC$} (a-3-2);
\path[->](a-1-5) edge node[auto] {$\Phi^*$} (a-3-4);
\path[<-](a-3-4) edge node[auto] {$\bC_*$} (a-1-3);
\end{tikzpicture}
\end{equation}

Note, that in this diagram one can travel in both directions: to change the direction one just needs to take the adjoint of the corresponding operator.

%
%
%
Slightly abusing notation, we use $\bC$ to also denote the extension of $\bC$ to the model space $\cK\ci\te$ by the zero operator, and similarly for $\bC_*$.

Note that agreement of $\bC$ and $\bC_*$ with the Clark model can be rewritten as 
\begin{align}\label{e-comm}
\Phi^* (\bB^* U)^*= \bC, \qquad
\Phi^*\bB = \bC_*.
\end{align}

And by taking restrictions (where necessary) we find
\begin{align}\label{e-INTER}
\cM_\theta \bC = \bC_*\Gamma
\qquad
\text{and}
\qquad
\cM_\theta^* \bC_* = \bC\Gamma^*.
\end{align}

We express the action of the model operator and its adjoint in an auxiliary result. The result holds in any transcription of the model. 
We will need the following simple fact. 

\begin{lem}
\label{l:Tdefect}
For a contraction $T$
\begin{align*}
T \fD\ci T \subset \fD\ci{T^*}, \qquad T^* \fD\ci{T^*} \subset \fD\ci{T} \,.
\end{align*}
\end{lem}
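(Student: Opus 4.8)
The plan is to prove the two inclusions directly from the definitions of the defect operators and spaces, using only the algebraic identity that relates $T D\ci T$ to $D\ci{T^*} T$. First I recall that $D\ci T = (\bI - T^*T)^{1/2}$ and $D\ci{T^*} = (\bI - TT^*)^{1/2}$, and that $\fD\ci T = \clos\Ran D\ci T$, $\fD\ci{T^*} = \clos\Ran D\ci{T^*}$. By symmetry (replacing $T$ by $T^*$) it suffices to prove the first inclusion $T\fD\ci T \subset \fD\ci{T^*}$; the second follows automatically.

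The key step is the operator identity $T D\ci T^2 = D\ci{T^*}^2 T$, which is immediate since $T(\bI - T^*T) = T - TT^*T = (\bI - TT^*)T$. From this one deduces the intertwining relation $T D\ci T = D\ci{T^*} T$ at the level of square roots: indeed, a standard fact is that if $A, B$ are positive operators and $X$ satisfies $X A^2 = B^2 X$, then $X A = B X$ (this follows, e.g., by functional calculus / the uniqueness of positive square roots applied to $X A^2 X^* $ versus an approximation argument, or most cleanly by writing $A = \lim p_n(A^2)$, $B = \lim p_n(B^2)$ along a sequence of polynomials $p_n$ converging to $\sqrt{\cdot}$ uniformly on the joint spectrum, a bounded set since $T$ is a contraction). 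Applying this with $A = D\ci T$, $B = D\ci{T^*}$, $X = T$ gives $T D\ci T = D\ci{T^*} T$.

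Now for the inclusion itself: take $x \in \Ran D\ci T$, say $x = D\ci T y$. Then $Tx = T D\ci T y = D\ci{T^*}(Ty) \in \Ran D\ci{T^*} \subset \fD\ci{T^*}$. Since $T$ is bounded and $\fD\ci{T^*}$ is closed, $T$ maps $\clos\Ran D\ci T = \fD\ci T$ into $\fD\ci{T^*}$, which is exactly $T\fD\ci T \subset \fD\ci{T^*}$. Replacing $T$ by $T^*$ throughout (note $(T^*)^* = T$, so the roles of $D\ci T$ and $D\ci{T^*}$ swap) yields $T^*\fD\ci{T^*} \subset \fD\ci T$.

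The only point requiring a little care — and hence the main obstacle — is the passage from $TD\ci T^2 = D\ci{T^*}^2 T$ to $TD\ci T = D\ci{T^*}T$, i.e.\ the square-root intertwining lemma. This is entirely standard, but I would state it explicitly (or cite it) rather than assert it, since the naive claim ``$XA^2 = B^2X \implies XA = BX$'' is false without positivity of $A$ and $B$. Here positivity holds because $\bI - T^*T \geq 0$ and $\bI - TT^*\geq 0$ for a contraction $T$, so the argument goes through; the uniform polynomial approximation of $t \mapsto \sqrt t$ on $[0,\|\bI - T^*T\|] \cup [0, \|\bI - TT^*\|]$ handles the non-commutative functional calculus cleanly.
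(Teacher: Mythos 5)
Your proof is correct, but it takes a genuinely different route from the paper's. You derive the operator intertwining identity $TD\ci{T}=D\ci{T^*}T$ from $TD\ci{T}^2=D\ci{T^*}^2T$ via uniform polynomial approximation of $t\mapsto\sqrt{t}$ on a compact interval containing both spectra, and then read off the inclusion $T\,\Ran D\ci T\subset \Ran D\ci{T^*}$ and pass to closures; your caution about needing positivity of $A,B$ for the square-root intertwining step is well placed, and the induction $XA^{2k}=B^{2k}X$ makes the limit argument airtight. The paper instead argues geometrically and more elementarily: from $\|Tx\|^2=\|x\|^2-\|D\ci Tx\|^2$ it characterizes $\fD\ci T^\perp$ (resp.\ $\fD\ci{T^*}^\perp$) as the set where $T$ (resp.\ $T^*$) is isometric, uses polarization to get $T^*Tx=x$ on $\fD\ci T^\perp$, concludes $T(\fD\ci T^\perp)\subset\fD\ci{T^*}^\perp$, and dualizes to obtain $T^*\fD\ci{T^*}\subset\fD\ci T$. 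Your approach requires the continuous functional calculus but yields the stronger and independently useful commutation relation $TD\ci T=D\ci{T^*}T$ (a standard identity in Sz.-Nagy--Foia\c{s} theory); the paper's approach uses nothing beyond the norm identity and orthogonality, which is all the lemma itself needs.
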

\begin{proof} Since $D\ci T $ is a strict contraction on $\fD\ci T$ we get that 
\begin{align}
\notag
\|T x\|=\|x\|\quad &\iff \quad x\perp \fD\ci{T}, \\
\intertext{and similarly, since $T^*$ is a strict contraction on $\fD\ci{T^*}$,}
\label{IsomT-02}
\|T^* x\|=\|x\|\quad &\iff \quad x\perp \fD\ci{T^*} \,.
\end{align}
Thus the operator $T$ is an isometry on $\fD\ci{T}^\perp$, so the  polarization identity implies that $T^*Tx=x$ for all $x\in \fD\ci{T}^\perp$. Together with \eqref{IsomT-02} this implies that $T(\fD\ci{T}^\perp) \subset \fD\ci{T^*}^\perp$, which is equivalent to the inclusion $T^* \fD\ci{T^*} \subset \fD\ci{T}$\,. 

Replacing $T$ by $T^*$ we get $T \fD\ci{T} \subset \fD\ci{T^*}$.  
\end{proof}

%
%
%
%
%

\begin{lem}\label{l-model}
Let $T$ be as defined in \eqref{pert-02} with $\Gamma$ being a strict contraction. Assume also that 
$\Ran \bB$ is star-cyclic (so $T$ is completely non-unitary, see Lemma \ref{l-cnu}).

Let $\theta\in H^\infty(\fD\tto \fD_*)$, $\fD=\fD_*=\C^d$, be the characteristic function of $T$, and let $\cM_\theta:\cK_\theta\to \cK_\theta$ be a model operator. Let $\bC:\fD\to \fD\ci{\cM_\theta}$ and $\bC_* :\fD\to \fD\ci{\cM_\theta^*}$ be the parametrizing unitary operators, that agree with a Clark model. 

Then 
\[
\cM_\theta = M_z +(\bC_* \Gamma - M_z \bC) \bC^*
\qquad\text{and}\qquad
\cM_\theta^* = M_{\bar z}+ (\bC  \Gamma^*  - M_{\bar{z}} \bC_*)\bC_*^* .
\]
\end{lem}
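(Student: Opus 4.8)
The idea is to start from the known abstract description of the model operator $\cM_\theta$ as a perturbation of the unilateral/bilateral shift $M_z$ by an operator living on the defect spaces, and then translate the ``defect-space'' language into the ``$\C^d$-coordinate'' language via the parametrizing operators $\bC$, $\bC_*$ and the intertwining relations \eqref{e-INTER}. Concretely, for any c.n.u.\ contraction $S$ on a Hilbert space $\cK$ (here $S=\cM_\theta$, $\cK=\cK_\theta$) one has $S = M_z + (\text{rank } d\text{ correction})$, where the correction is supported on $\fD\ci{S}$ as domain and maps into $\fD\ci{S^*}$. More precisely, by Lemma \ref{l:Tdefect} applied to $\cM_\theta$, both $\cM_\theta$ and $M_z$ agree on $\fD\ci{\cM_\theta}^\perp$ (there $\cM_\theta$ is an isometry and, one checks, coincides with $M_z$), so $\cM_\theta - M_z$ annihilates $\fD\ci{\cM_\theta}^\perp$; and since $\Ran(\cM_\theta - M_z)\subset\fD\ci{\cM_\theta^*}$ we may write $\cM_\theta - M_z = A\,P\ci{\fD\ci{\cM_\theta}}$ for a suitable $A:\fD\ci{\cM_\theta}\to\fD\ci{\cM_\theta^*}$. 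The projection $P\ci{\fD\ci{\cM_\theta}}$ is exactly $\bC\bC^*$ (recall $\bC:\fD\to\fD\ci{\cM_\theta}$ is unitary, extended by $0$, so $\bC\bC^* = P\ci{\fD\ci{\cM_\theta}}$ and $\bC^*\bC = \bI\ci\fD$). Hence $\cM_\theta = M_z + A\bC\bC^*$.

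\textbf{Identifying the coefficient $A\bC$.} It remains to show $A\bC = \bC_*\Gamma - M_z\bC$ as operators $\fD\to\cK_\theta$. Equivalently, $\cM_\theta \bC = A\bC\bC^*\bC + M_z\bC = A\bC + M_z\bC$, so I must check $\cM_\theta\bC = \bC_*\Gamma$. But this is precisely the first identity in \eqref{e-INTER}, which follows from the agreement relations \eqref{e-comm} together with the restricted form of the Clark-type relation $\cM_\theta\Phi^* = \Phi^* T$ and $T$'s block structure $\bB\Gamma\bB^*U$ on the defect spaces: starting from $\Phi^*\bB = \bC_*$ and $\Phi^*(\bB^*U)^* = \bC$, one computes $\cM_\theta\bC = \cM_\theta\Phi^*(U^*\bB) = \Phi^* T U^*\bB = \Phi^* \bB\Gamma\bB^*U\,U^*\bB = \Phi^*\bB\,\Gamma = \bC_*\Gamma$, using $\bB^*\bB = \bI\ci\fD$. (The excerpt already records \eqref{e-INTER}, so I may simply cite it.) This gives $\cM_\theta = M_z + (\bC_*\Gamma - M_z\bC)\bC^*$, which is the first claimed formula.

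\textbf{The adjoint formula.} For the second identity I take adjoints. From $\cM_\theta = M_z + (\bC_*\Gamma - M_z\bC)\bC^*$ we get
\[
\cM_\theta^* = M_z^* + \bC(\Gamma^*\bC_*^* - \bC^* M_z^*) = M_{\bar z} + \bC\Gamma^*\bC_*^* - \bC\bC^* M_{\bar z},
\]
using $M_z^* = M_{\bar z}$. Now I rewrite the last term: $\bC\bC^* = P\ci{\fD\ci{\cM_\theta}}$, and I want to replace it so as to land the expression on the defect space of $\cM_\theta^*$. The clean way is to run the same ``shift-plus-defect-correction'' argument directly for $\cM_\theta^*$: since $\cM_\theta^* - M_{\bar z}$ annihilates $\fD\ci{\cM_\theta^*}^\perp$ and has range in $\fD\ci{\cM_\theta}$, write $\cM_\theta^* = M_{\bar z} + B\bC_*\bC_*^*$ with $B\bC_* : \fD\to\cK_\theta$, and then evaluate $\cM_\theta^*\bC_* = M_{\bar z}\bC_* + B\bC_*$, so $B\bC_* = \cM_\theta^*\bC_* - M_{\bar z}\bC_* = \bC\Gamma^* - M_{\bar z}\bC_*$, where the last equality is the second identity in \eqref{e-INTER}. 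This yields $\cM_\theta^* = M_{\bar z} + (\bC\Gamma^* - M_{\bar z}\bC_*)\bC_*^*$, as required. (One should double-check this is consistent with the adjoint computed above; it is, because $\bC\bC^*M_{\bar z}$ restricted appropriately equals $M_{\bar z}\bC_*\bC_*^*$ modulo the pieces already accounted for — but running the argument symmetrically avoids having to verify that directly.)

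\textbf{Main obstacle.} The only genuinely nontrivial point is the assertion that $\cM_\theta$ and $M_z$ coincide on $\fD\ci{\cM_\theta}^\perp$ (not merely that their difference has range in $\fD\ci{\cM_\theta^*}$ and kernel containing $\fD\ci{\cM_\theta}^\perp$ — that much is automatic from Lemma \ref{l:Tdefect} and the fact that $M_z$ is an isometry on all of $\cK_\theta$ with the same defect structure). Equivalently one needs $\cM_\theta|\ci{\fD\ci{\cM_\theta}^\perp} = M_z|\ci{\fD\ci{\cM_\theta}^\perp}$, which holds because on $\fD\ci{\cM_\theta}^\perp$ the contraction $\cM_\theta$ is an isometry, $M_z$ is an isometry, and both have the form ``multiplication by $z$ followed by the model projection'' — so on vectors $f$ with $M_z f\in\cK_\theta$ already they agree, and such $f$ fill out $\fD\ci{\cM_\theta}^\perp$. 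I would either spell this out from the definition $\cM_\theta f = P\ci{\cK_\theta} M_z f$ (noting $\fD\ci{\cM_\theta} = \cK_\theta\ominus M_z^*\cK_\theta$ essentially, so $M_z$ maps $\fD\ci{\cM_\theta}^\perp$ into $\cK_\theta$) or, more efficiently, observe that the representations $\cM_\theta = M_z + (\,\cdot\,)\bC^*$ and $\cM_\theta^* = M_{\bar z} + (\,\cdot\,)\bC_*^*$ are the standard ``defect form'' of a model operator and cite the functional-model literature; either way the verification is a short and routine consequence of Lemma \ref{l:Tdefect}.
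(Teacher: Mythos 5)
Your proof is correct and follows essentially the same route as the paper: both decompose $\cM_\theta = M_z(\bI - \bC\bC^*) + \cM_\theta\bC\bC^*$ (using $P\ci{\fD\ci{\cM_\theta}} = \bC\bC^*$ and the fact that $\cM_\theta$ acts as $M_z$ off its defect space), substitute $\cM_\theta\bC = \bC_*\Gamma$ from \eqref{e-INTER}, and handle the adjoint by the symmetric argument rather than by taking adjoints directly. The ``main obstacle'' you flag is exactly the step the paper dismisses as trivial, and the clean justification is the one you gesture at: for $f\perp\fD\ci{\cM_\theta}$ the operator $\cM_\theta$ is isometric, so $\|P\ci{\cK_\theta}M_z f\| = \|f\| = \|M_z f\|$, which forces $M_z f\in\cK_\theta$ and hence $\cM_\theta f = M_z f$.
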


\begin{proof}
Since operator $\cM_\theta$ acts on $\cK_\theta\ominus\fD\ci{\cM_\theta}$ as the multiplication operator $M_z$, we can trivially write
\[
\cM_\theta = M_z (\bI - P\ci{\fD\ci{\cM_\te}}) + \cM_\theta P\ci{\fD\ci{\cM_\te}} .
\]
Recalling that $\bC:\fD\to\cK_\theta$ is an isometry with range $\fD\ci{\cM_\te}$, we can see that $P\ci{\fD\ci{\cM_\te}} = \bC \bC^* $, so
\begin{align}
\label{MzP-MzCC}
M_z (\bI - P\ci{\fD\ci{\cM_\te}})= M_z (\bI - \bC \bC^*) .
\end{align}
 Using the identity $P\ci{\fD\ci{\cM_\te}} = \bC \bC^* $ and the first equation of \eqref{e-INTER} we get
\[
\cM_\te P\ci{\fD\ci{\cM_\te}} = \cM_\theta \bC\bC^* 
=
\bC_*\Gamma\bC^*,
\]
which together with  \eqref{MzP-MzCC} gives us the desired formula for $\cM_\theta$. 

To get the formula for $\cM_\theta^*$ we represent it as 
\[
\cM_\theta^* = M_{\overline z} (\bI - P\ci{\fD\ci{\cM_\te^*}}) + \cM_\theta^* P\ci{\fD\ci{\cM_\te^*}}.
\]
Using the identities 
\[
P\ci{\fD\ci{\cM_\te^*}} = \bC_*\bC_*^* , \qquad \cM_\theta^* P\ci{\fD\ci{\cM_\te^*}} = \bC \Gamma^* \bC_*^*
\]
(the first holds because $\fD\ci{\cM_\te^*}$ is the range of the isometry $\bC_*$, and the second one follows from the second equation in \eqref{e-INTER}),  we get the desired formula. 
\end{proof}

\subsection{Representation Theorem}
For a (general) model operator $\cM_\te$, $\te\in H^\infty(\fD\to \fD_*)$,  
the parametrizing operators $\bC:\fD\to \fD\ci{\cM_\theta}$, $\bC_* : \fD_*\to \fD\ci{\cM_\theta^*}$ give rise to (uniquely defined) operator-valued functions $C$ and $C_*$, where $C(\xi):\fD\to \fD\oplus \fD_*$, $C_*(\xi):\fD_*\to \fD\oplus \fD_*$ and
\begin{align}\label{d-Cz}
(\bC e)(z)
& =
C(z) e 
&&\text{for all}
\quad e\in \fD, 
\\
\label{d-C*z}
(\bC_* e_*)(z)
&=
C_*(z) e_* 
&&\text{for all}
\quad e_*\in \fD_* 
. 
\end{align}
If we fix orthonormal bases in $\fD$ and $\fD_*$, then the $k$th column of the matrix of $C(\xi)$ is defined as $(\bC_*e_k)(\xi)$, where $e_k$ it the $k$th vector in the orthonormal basis in $\fD$, and similarly for $C_*$. 

If $\cM_\te$ is a model for a contraction $T=T\ci\Gamma$ with $\Gamma$ being a strict contraction on $\fD=\C^d$, we can see from \eqref{BlDec-01} that $\dim \fD\ci T = \dim\fD\ci{T^*}=d$, so we can always pick a characteristic function $\theta\in H^\infty(\fD\to\fD_*)$ (i.e.~with $\fD_*=\fD=\C^d$).


The following formula for the adjoint  $\Phi^*$ of the Clark operator $\Phi$  generalizes  the ``universal" representation theorem \cite[Theorem 3.1]{LT15} to higher rank perturbations.

\begin{theo}[Representation Theorem]\label{t-repr}
Let $T$ be as defined in \eqref{pert-02} with $\Gamma$ being a strict contraction and $U=M_\xi$ in $\cH\subset L^2(\mu;E)$. 
Let $\te =\te\ci{T}$ be a characteristic function of $T$,  and let  $\cK_\te$ and $\cM_\te$ be the corresponding model space and model operator. 

Let $\bC:\fD\to \fD\ci{\cM_\theta}$ and $\bC_* :\fD\to \fD\ci{\cM_\theta^*}$ be the parameterizing unitary operators%
\footnote{Note that here we set $\fD_*=\fD$, which is possible because the dimensions of the defect spaces are equal.}
that agree with Clark model, i.e.~such that  \eqref{e-comm} is satisfied for some Clark operator $\Phi$. And let $C(z)$ and $C_*(z)$ be given by \eqref{d-Cz} and \eqref{d-C*z}, respectively.
%
%
%
%

Then  the action of the adjoint  Clark operator $\Phi^*$ is given by 
\begin{align}
\label{e-repr}
\bigl(\Phi^* hb\bigr)(z)
=
h(z)C_*(z){\bB}^*b
+
(C_*(z)-z C(z))
\int\ci\T
\frac{h(\xi) - h(z)}{1-z\bar\xi} B^*(\xi)b(\xi) 
\dd\mu(\xi),
\end{align}
for any $b\in\Ran \bB$ and for all $h\in C^1(\T)$;
here 
\[
B^*(\xi) = \left( \begin{array}{c} b_1(\xi)^* \\ b_2(\xi)^* \\ \vdots \\ b_d(\xi)^* \end{array}\right)
\] 
and 
$
\bB^* b
=
\int\ci\T B^*(\xi) b(\xi) d\mu(\xi),
$ as explained more thoroughly in the proof below. 
\end{theo}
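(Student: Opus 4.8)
The plan is to exploit the intertwining relation in the form \eqref{e-intertwinePhi*}, $\Phi^* T = \cM_\te \Phi^*$, together with the concrete action of $\cM_\te$ and $\cM_\te^*$ from Lemma \ref{l-model}, and to nail down $\Phi^* h b$ by testing it against the generators $U^k \Ran\bB$, $k\in\Z$, which span $\cH$ by star-cyclicity. The starting point is the commutation relations \eqref{e-comm}: applying $\Phi^*$ to $b\in\Ran\bB$ gives $\Phi^* b = \bC_* \bB^* b$, i.e.\ $(\Phi^* b)(z) = C_*(z)\bB^* b$, which is exactly the first term in \eqref{e-repr} with $h\equiv 1$. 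For general $h\in C^1(\T)$, the idea is to approximate/decompose $hb$ using the action of $U=M_\xi$: since $M_\xi^k b$ for $k\in\Z$ span a dense set, one first establishes the formula for $h$ a (Laurent) polynomial and then passes to $C^1(\T)$ by a density/continuity argument, using that the singular-integral-type kernel $(h(\xi)-h(z))/(1-z\bar\xi)$ is bounded when $h\in C^1$.

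First I would prove the formula for $h(z)=z$ (and $h(z)=\bar z$). For $h(z)=z$: we want $\Phi^*(M_\xi b)$. Write $M_\xi b = T b + (M_\xi - T)b$; from \eqref{BlDec-01} (or directly from \eqref{pert-02}) one computes $(M_\xi - T)b = U b - T b = -\bB(\Gamma - \bI\ci\fD)\bB^* U b$ applied appropriately — more cleanly, use $Tb$ and the relation $\bB^* U\ci\Gamma = \Gamma\bB^* U$ established in the proof of Lemma \ref{l:*cycl} — to express $M_\xi b$ as a combination of $Tb$ and an element of $\Ran\bB$. Then $\Phi^*(Tb) = \cM_\te \Phi^* b = \cM_\te(\bC_* \bB^* b)$, and the formula for $\cM_\te$ from Lemma \ref{l-model}, namely $\cM_\te = M_z + (\bC_*\Gamma - M_z\bC)\bC^*$, combined with $\bC^*\bC_* = \Gamma^*$ (a restriction of \eqref{e-INTER}, since $\cM_\te^*\bC_* = \bC\Gamma^*$ gives $\bC^*\cM_\te^*\bC_* = \Gamma^*$ after using ranges), collapses the expression to $z C_*(z)\bB^* b + (C_*(z) - zC(z))(\text{correction})$. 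The correction term is where the $k=0$ difference quotient $(h(\xi)-h(z))/(1-z\bar\xi)$ with $h(\xi)=\xi$ appears, after recognizing $\bB^* U b = \int_\T \xi\, B^*(\xi) b(\xi)\dd\mu(\xi)$ and $\bB^* b = \int_\T B^*(\xi)b(\xi)\dd\mu(\xi)$.

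Next, by induction on $k$, I would extend to $h(z)=z^k$ for all $k\in\Z$: apply $\cM_\te$ (resp.\ $\cM_\te^*$) repeatedly and track how the "multiplication part" $M_z$ shifts the already-established difference-quotient integral while the "rank-$d$ correction" $(\bC_*\Gamma - M_z\bC)\bC^*$ contributes only through evaluation against $\Ran\bB$; the algebraic identity $\sum_{j} z^j \xi^{k-1-j}$ telescoping into $(\xi^k - z^k)/(1-z\bar\xi)\cdot(\text{unimodular factor})$ is what converts the recursion into the closed form. (Here one uses $1-z\bar\xi$ rather than $\xi - z$ because on $\T$, $\bar\xi = 1/\xi$, so $(\xi^k-z^k)/(\xi-z)$ and the stated kernel differ by the harmless factor $-\bar\xi$; I would absorb such factors carefully into $B^*(\xi)$, consistent with the theorem's parenthetical "as explained more thoroughly in the proof.") Having the formula for all Laurent polynomials $h$, linearity gives it for all trigonometric polynomials, and then a density argument in $C^1(\T)$ — using boundedness of the bilinear form $(h,b)\mapsto \int_\T \frac{h(\xi)-h(z)}{1-z\bar\xi}B^*(\xi)b(\xi)\dd\mu(\xi)$ on $C^1(\T)\times\Ran\bB$, together with continuity of $\Phi^*$ — finishes the proof.

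\textbf{Main obstacle.} I expect the principal difficulty to be purely organizational rather than conceptual: carefully managing the unitary "coordinate bookkeeping" so that the right representatives of $C$, $C_*$, $\bB$, $\Gamma$ appear in the right places — in particular verifying the reductions $\bC^*\bC_* = \Gamma^*$, $\bC_*^*\bC_* = \bI$, $\bC_*^*\cM_\te\bC = \Gamma$, and the precise sign/unimodular factor relating $(\xi-z)^{-1}$ to $(1-z\bar\xi)^{-1}$ on $\T$ — and then checking that the difference-quotient integral genuinely telescopes under the recursion. The analytic passage from trigonometric polynomials to $C^1(\T)$ is routine given that $h\in C^1$ keeps the kernel bounded (the singularity at $\xi=z$ is removable), so it should not pose real trouble.
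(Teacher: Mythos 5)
Your proposal follows essentially the same route as the paper's proof: derive the one-step recursion $\Phi^* M_\xi = M_z\Phi^* + (\bC_*-M_z\bC)\bB^*M_\xi$ from the intertwining relation, Lemma \ref{l-model}, and the cancellations coming from \eqref{e-comm}; iterate to get the formula for monomials $\xi^n$ and $\bar\xi^n$ (with the geometric sum telescoping to the kernel $(\xi^n-z^n)/(1-z\bar\xi)$, the factor $\bar\xi=1/\xi$ emerging automatically from the sum rather than needing to be absorbed into $B^*$); and finish by linearity plus a $C^1$ density argument using boundedness of $W^{1/2}C_1$. The approach is correct and matches the paper's.
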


\begin{rem*}
The above theorem looks like an abstract nonsense, because right now it is not clear how to find the parametrizing operators $\bC$ and $\bC_*$ that agree with the Clark model. However, Theorem \ref{t-theta} below  gives an explicit formula for the characteristic function $\theta$ (one of the representative in the equivalence class), and  
 Lemma \ref{l:C_N-F}  gives an explicit formulas for  $\bC$ and $\bC^*$ in the Sz.-Nagy--Foia\c s transcription, that agree with Clark model for our $\theta$.
\end{rem*}


When $d=1$ this formula agrees with the special case of the representation formula derived in \cite{LT15}. While some of the ideas of the following proof were originally developed there, the current extension to rank $d$ perturbations requires several new ideas and a more abstract way of thinking.

\begin{proof}[Proof of Theorem \ref{t-repr}]
Recall that $U= M_\xi$,
so $T =  M_\xi+ {\bB}(\Gamma - \OID\ci{\C^d}){\bB}^* M_\xi$. The intertwining relation $\Phi^* T=\cM_\te\Phi^*$ then can be rewritten as 
\begin{align}\label{equation}
\Phi^* M_\xi
+
\Phi^*{\bB}(\Gamma - \OID\ci{\C^d}){\bB}^* U
=
\Phi^* T
=
\cM_\te\Phi^*
=
[M_z +(\bC_* \Gamma - M_z \bC) \bC^*]\Phi^*;
\end{align}
here we used Lemma \ref{l-model} to express the model operator in the right hand side of \eqref{equation}. 

 By the commutation relations in equation \eqref{e-comm}, the term $\Phi^*{\bB}\Gamma{\bB}^* U$ on the left hand side of \eqref{equation} cancels with the term $\bC_* \Gamma  \bC^*\Phi^*$ on the right hand side of \eqref{equation}. Then \eqref{equation} can be rewritten as 
\begin{align}
\notag
\Phi^* M_\xi
&=
M_z \Phi^*
+\Phi^*{\bB} \OID\ci{\C^d}{\bB}^* U - M_z \bC \bC^*\Phi^*
\\ \label{e-PhiUn-01}
&=
M_z \Phi^*
+(\bC_*-M_z\bC){\bB}^* M_\xi ;
\end{align}
the last identity holds because, by \eqref{e-comm}, we have $\Phi^*\bB =\bC_*$ and $\bC^*\Phi^* = {\bB}^* U =\bB^* M_\xi $. 

%
 
Right multiplying \eqref{e-PhiUn-01} by $M_\xi$ and using \eqref{e-PhiUn-01} we get
 \begin{align*}
\Phi^* M_\xi^2
&=
M_z \Phi^*M_\xi+(\bC_*- M_z \bC){\bB}^*M_\xi^2
\\&=
M_z^2 \Phi^*+M_z (\bC_*- M_z \bC){\bB}^*M_\xi+ (\bC_*- M_z \bC){\bB}^*M_\xi^2 .
 \end{align*}
Right multiplying the above equation by $M_\xi$ and using \eqref{e-PhiUn-01} again we get the identity 
%
\begin{align}\label{e-PhiUn}
\Phi^* M_\xi^n
=
M^n_z\Phi^*
+
\sum_{k=1}^n M_z^{k-1}  (\bC_* - M_z \bC)
 {\bB}^* M_\xi^{n-k+1},
\end{align}
with $n=3$. Right multiplying by $M_\xi$ and applying \eqref{e-PhiUn-01} we get by induction that \eqref{e-PhiUn} holds for all $n\ge 0$. 
(The case $n=0$ trivially reads $\Phi^*=\Phi^*$, and equation \eqref{e-PhiUn-01} is precisely the case $n=1$.)


 
%
We now apply \eqref{e-PhiUn} to some $b\in\Ran \bB$. By commutative diagram \eqref{d:agree-02} we get that $\Phi^* b =\bC_*\bB^* b$, i.e.~$(\Phi^* b)(z) = C_*(z) \bB^*b$.  Using this identity we  get
%
 \begin{align}\label{e-application}
\bigl( \Phi^* M_\xi^n b \bigr)(z)
& =
z^n ( \Phi^* b)(z)
+
\sum_{k=1}^n
z^{k-1}(C_*(z)- z C(z) )  {\bB}^* M_\xi^{n-k+1} b
\\ \notag
& = z^n C_*(z)( \bB^* b)(z)
+
(C_*(z)- z C(z) )  \sum_{k=1}^n
z^{k-1} {\bB}^* M_\xi^{n-k+1} b .
\end{align}
To continue, we recall that $\bB:\C^d\to L^2(\mu;E)$ acts as multiplication by matrix $B(\xi) = (b_1(\xi), b_2(\xi), \ldots, b_d(\xi))$, so its adjoint $\bB^*:\cH\subset L^2(\mu;E)\to \C^d$ is given by
\[
\bB^* f
=
\int\ci\T B^*(\xi) f(\xi) d\mu(\xi)
\qquad\text{for }f\in \cH,
\]
where the integral can be expanded as
\[
\int\ci\T B^*(\xi) f(\xi) d\mu(\xi)
=
\begin{pmatrix}
\int\ci\T  b_1(\xi)^* f(\xi) d\mu(\xi)\\
\int\ci\T  b_2(\xi)^* f(\xi) d\mu(\xi)\\
\vdots\\
\int\ci\T  b_d(\xi)^* f(\xi) d\mu(\xi)\\
\end{pmatrix}.
\]
Using the sum of geometric progression formula we evaluate the sum in \eqref{e-application} to
\begin{align}
\notag
\sum_{k=1}^n
z^{k-1}  {\bB}^* M_\xi^{n-k+1} b
&=
\sum_{k=1}^n
z^{k-1} \int\ci\T \xi^{n-k+1} B^*(\xi)  b(\xi) 
d\mu(\xi)
\\
\notag&=
 \int\ci\T \sum_{k=1}^n
 z^{k-1} \xi^{n-k+1} B^*(\xi)  b(\xi) 
d\mu(\xi)
\\
\label{e-geometric}&= 
\int\ci\T
\frac{\xi^n - z^n}{1-z\bar\xi}  
B^*(\xi)  b(\xi) 
d\mu(\xi) .
\end{align}

Thus, we have proved \eqref{e-repr} for monomials $h(\xi)= \xi^n$,  $n\ge 0$. And by linearity of $\Phi^*$ the representation \eqref{e-repr} holds for (analytic) polynomials $h$ in $\xi$.

The argument leading to determine the action of $\Phi^*$ on polynomials $h$ in $\bar \xi$ is similar. But we found that the devil is in the details and therefore decided to include much of the argument.

First observe that the intertwining relation \eqref{e-intertwinePhi*} is equivalent to $\cM_\te^*\Phi^*=\Phi^* T^*$. Recalling $T^* = U^* + U^* \bB (\Gamma^*- \OID\ci{\C^d}) \bB^*$ and the resolution of the adjoint model operator $\cM_\te^*$ (see second statement of Lemma \ref{l-model}), we obtain
\[
M_{\bar z}\Phi^*+ (\bC  \Gamma^*  - \bar{z} \bC_*)\bC_*^* \Phi^*=
\cM_\te^*\Phi^*
=\Phi^* T^*
=
\Phi^* U^* - \Phi^* U^* \bB (\Gamma^*- \OID\ci{\C^d}) \bB^*.
\]

The terms involving $\Gamma^*$ on the left hand side and the right hand side cancel by the commutation relations in equation \eqref{e-comm} (actually by their adjoints). Now, rearrangement and another application of the adjoints of the commutation relations in equation \eqref{e-comm} yields
\begin{align}
\notag
\Phi^* M_{\bar\xi}
&=
\Phi^* U^*
=
M_{\bar z}\Phi^*
+ 
 \Phi^* U^* \bB  \OID\ci{\C^d} \bB^* - \bar{z} \bC_*\bC_*^* \Phi^*
 =
M_{\bar z}\Phi^*
+ 
(\bC  - M_{\bar{z}}\bC_* ) \bB^*\\
\label{e-mxi}
&=
M_{\bar z}\Phi^*
+ 
M_{\bar{z}}(M_{{z}}\bC  - \bC_* ) \bB^*.
\end{align}
In analogy to the above, we right multiply \eqref{e-mxi} by $M_{\bar \xi}$ and apply \eqref{e-mxi} twice to obtain
\[
\Phi^* M_{\bar\xi}^2
=
M_{\bar z}^2\Phi^*
+ 
  \sum_{k=1}^2
M_{\bar z}^{k} (M_z \bC-\bC_*) 
{\bB}^*M_{\bar \xi}^{2-k}
.
\]
Inductively, we conclude
\[
\Phi^* M_{\bar\xi}^n
=
M^n_{\bar z}\Phi^*
-
\sum_{k=1}^n
M_{\bar z}^{k} (\bC_*-M_{z} \bC) {\bB}^*M_{\bar\xi}^{n-k},
\]
which \emph{differs in the exponents and in the sign from its counterpart expression} in equation \eqref{e-PhiUn}.

Through an application of this identity to $b$ and by the commutative diagram \eqref{d:agree-02}, we see
 \begin{align*}
\bigl( \Phi^* M_{\bar \xi}^n b \bigr)(z)
& =
\bar z^n ( \Phi^* b)(z)
-
\sum_{k=1}^n
\bar z^{k}(C_*(z)- z C(z) )  {\bB}^* M_{\bar\xi}^{n-k} b
\\ \notag
& = \bar z^n C_*(z)( \bB^* b)(z)
-
(C_*(z)- z C(z) )  \sum_{k=1}^n
\bar z^{k} {\bB}^* M_{\bar\xi}^{n-k} b .
\end{align*}
As in equation \eqref{e-geometric}, but here with the geometric progression
\[
-\sum_{k=1}^n (\bar z)^k (\bar \xi)^{n-k} = \frac{(\bar \xi)^n - (\bar z)^n}{1-\bar \xi z},
\]
we can see equation \eqref{e-repr} for monomials $\bar \xi^n$, $n\in \N$.
And by linearity of $\Phi^*$, we obtain the \emph{same formula} \eqref{e-repr} for functions $h$ that are polynomials in $\bar\xi$.

We have proved \eqref{e-repr} for trigonometric polynomials $f$. The theorem now follows by a standard approximation argument, developed in \cite{LT09}. The application of this argument to the current situation is a slight extension of the one used in \cite{LT15}.
Fix $f\in C^1(\T)$ and let $\{p_k\}$ be a sequence of trigonometric polynomials with uniform on $\T$ approximations $p_k\rightrightarrows f$ and  $p_k'\rightrightarrows f'$.
In particular, we have $|p_k'|$ is bounded (with bound independent of $k$) and $p_k\to f$ as well as $p_kb\to fb$ in $L^2(\mu;E)$. Since $\Phi^*$ is a unitary operator, it is bounded and therefore we have convergence on the left hand side 
$\Phi^* p_k b \to \Phi^* f b$
in $\cK_\te$.

To investigate convergence on the right hand side, first recall that the model space is a subspace of the weighted space $L^2(W;\fD_*\oplus \fD)$.

So convergence of the first term on the right hand side happens, since $p_k\rightrightarrows f$ and the operator norm $\|\bC_* \bB^*\| = 1$ implies $p_kC_*(z) \bB^*b=p_k\bC_* \bB^*b\to f\bC_* \bB^*b=fC_*(z) \bB^*b$ in $\cK_\te$.

Lastly, to see convergence of the second term on the right hand side, consider auxiliary functions $f_k:=f-p_k$. We have $f_k\rightrightarrows 0$ and $f_k'\rightrightarrows 0$. Let $I_{\xi, z}\subset \T$ denote the shortest arc connecting $\xi $ and $z$. Then by the intermediate value theorem
\begin{align*}
 |f_k(\xi )- f_k(z)| 
 \le \|f_k'\|_\infty |I_{\xi,z}|
  \qquad\text{for all } \xi,z\in\T.
\end{align*}
In virtue of the geometric estimate $|I_{\xi,z}|\le\frac\pi2 |\xi-z|$, we obtain
\[
\left| \frac{f_k(\xi)-f_k(z)}{1-\overline \xi z} \right| \le \frac\pi2 \|f_k'\|_\infty \to 0 \qquad \text{as } k\to \infty.
\]
And since $\bB^*$ is bounded as a partial isometry, we conclude the componentwise uniform convergence
\begin{align*}
\int\frac{p_k(\xi)-p_k(z)}{1-\overline \xi z}B^*(\xi)  b(\xi) \, d\mu(\xi)
\quad 
\rightrightarrows 
\quad
\int\frac{f(\xi)-f(z)}{1-\overline\xi z}B^*(\xi)  b(\xi) \, d\mu(\xi)
\qquad
z\in \T.
\end{align*}
By Lemma \ref{l:bound-C} below the functions $W^{1/2}C$ and $W^{1/2}C_*$ are bounded, and so is the function $W^{1/2}C_1$, $C_1(z) := C_*(z) - z C(z)$. That means the multiplication operator $f\mapsto C_1 f$ is a bounded operator $L^2(\fD)\to L^2(W;\fD_*\oplus \fD)$ (recall that in our case $\fD=\fD_*$ and we use $\fD_*$ here only for the consistency with the general model notation). 

The uniform convergence implies the convergence in $L^2(\fD)$, so the boundedness of the multiplication by $C_1$ implies the convergence in norm in the second term in the right hand side of \eqref{e-repr} (in the norm of $L^2(W;\fD_*\oplus\fD)$). 
\end{proof}

\section{Model and agreement of operators}\label{s-ModAgree}
We want to explain how to get operators $\bC$ and $\bC_*$ that agree with each other. 

To do that we need to understand in more detail how the model is constructed, and what operator gives the unitary equivalence of the function and its model. 

Everything starts with the notion of unitary dilation. Recall that for a contraction $T$ in a Hilbert space $H$ its unitary dilation is a unitary operator $\cU$ on a bigger space $\cH$, $H\subset \cH$ such that for all $n\ge 0$
\begin{align}
\label{dila-01}
T^n = P\ci H \cU^n \bigm|_{H}. 
\end{align}
Taking the adjoint of this identity we immediately get that 
\begin{align}
\label{dila-02}
(T^*)^n = P\ci H \cU^{-n} \bigm|_{H}.
\end{align}
A dilation is called \emph{minimal} if it is impossible to replace $\cU$ by its restriction to a reducing subspace and still have the identities \eqref{dila-01} and \eqref{dila-02}. 

The structure of minimal unitary dilations is well known.

\begin{theo}[{\cite[Theorem 1.4]{Nik-Vas_model_MSRI_1998}} and {\cite[Theorem 1.1.16]{Nik-book-v2}}]
\label{t:MUDil}
Let $\cU:\cH\to\cH$ be a minimal unitary dilation of a contraction $T$. Then $\cH$ can be decomposed as  $\cH = G_*\oplus H \oplus G$, and with respect to this decomposition $\cU$ can be represented as 
\begin{align}
\label{MUDil}
\cU = \left( 
\begin{array}{ccc}
 \cE_*^* &  0 & 0  \\
 D\ci{T^*}V_*^* & T  &  0 \\
 -V T^* V_*^* &  V D\ci{T} &   \cE
\end{array}
\right)
\end{align}
where $\cE:G\to G$ and $\cE_*:G_*\to G_*$ are pure isometries, $V$ is a partial isometry with initial space $\fD_{T}$ and the final space $\ker \cE^*$ and $V_*$ is a partial isometry with initial space $\fD\ci{T^*}$ and final space $\ker \cE_*^*$. 

Moreover, any minimal unitary dilation of\, $T$ can be obtained this way. Namely if we pick auxiliary Hilbert spaces $G$ and $G_*$ and isometries $\cE$ and $\cE_*$ there  with $\dim \ker \cE^* = \dim\fD\ci{T}$, $\dim\ker \cE_*^* = \dim\fD\ci{T*}$ and then pick arbitrary partial isometries $V$ and $V_*$ with initial and final spaces as above, then \eqref{MUDil} will give us a minimal unitary dilation of $T$.   
\end{theo}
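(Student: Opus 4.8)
The plan is to verify the two assertions of Theorem~\ref{t:MUDil} in sequence: first that an arbitrary minimal unitary dilation $\cU$ has the block form \eqref{MUDil}, and second that the recipe in the last paragraph always produces a minimal unitary dilation. The starting point is the characterization of a minimal unitary dilation through the \emph{residual} and \emph{$*$-residual} subspaces. Given any minimal unitary dilation $\cU$ on $\cH\supset H$, I would set
\[
G:=\cspa\{\cU^n(\cU-T)H: n\ge 0\}, \qquad G_*:=\cspa\{\cU^{-n}(\cU^*-T^*)H: n\ge 0\},
\]
so that $\cU$ acts on $G$ as a pure isometry (incoming) and on $G_*$ as the adjoint of a pure isometry (outgoing); minimality forces $\cH=G_*\oplus H\oplus G$. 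This decomposition is standard and I would cite it from the Nikolski--Vasyunin references; the computation that these three subspaces are mutually orthogonal and span $\cH$ uses only \eqref{dila-01}, \eqref{dila-02} and $\|\cU x\|=\|x\|$.

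Next I would compute the block matrix of $\cU$ with respect to $\cH=G_*\oplus H\oplus G$. The $(2,2)$ entry is $P_H\cU|_H=T$ by \eqref{dila-01} with $n=1$. For the remaining entries one uses that $\cU$ is an isometry together with the dilation identities. The key one-step computations are: for $h\in H$, writing $\cU h=g_*+Th+g$ with $g_*\in G_*$, $g\in G$, the isometry condition $\|h\|^2=\|g_*\|^2+\|Th\|^2+\|g\|^2$ gives $\|g\|^2=\|h\|^2-\|Th\|^2=\|D\ci{T}h\|^2$, so the compression of $\cU$ from $H$ to $G$ is a contraction of the same norm as $D\ci{T}$, hence factors through $\fD\ci T$ as $VD\ci T$ for a partial isometry $V$ with initial space $\fD\ci T$; and one checks $g_*=0$, i.e.\ the $(1,2)$ entry vanishes, because $\cU h - Th \in G$ by definition of $G$. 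Dually, reading off the matrix of $\cU^*$ (equivalently using \eqref{dila-02}) yields the $(1,1)$ entry a pure isometry $\cE_*^*$ on $G_*$, the $(2,1)$ entry $D\ci{T^*}V_*^*$ with $V_*$ a partial isometry onto $\ker\cE_*^*$, and the $(3,3)$ entry a pure isometry $\cE$ on $G$. The only entry left is the $(3,1)$ corner; it is then \emph{forced} by the requirement that $\cU$ be unitary --- equating $\cU\cU^*=\bI$ (or $\cU^*\cU=\bI$) column by column and solving the resulting linear relations gives $-VT^*V_*^*$. This last step --- pinning down the corner entry and checking all the Gram relations close up --- is the part I expect to be the main bookkeeping obstacle, since one must juggle the identities $D\ci{T^*}T=TD\ci T$, $V^*V=P\ci{\fD\ci T}$, $V_*^*V_*=P\ci{\fD\ci{T^*}}$, $\cE^*\cE=\bI$, $\cE\cE^*=\bI-P\ci{\ker\cE^*}$ simultaneously.

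For the converse, I would start from arbitrary data $G,G_*,\cE,\cE_*,V,V_*$ with the stated dimension and initial/final-space conditions, \emph{define} $\cU$ by the matrix \eqref{MUDil}, and then (i) verify $\cU^*\cU=\cU\cU^*=\bI$ by direct block multiplication, again using $D\ci{T^*}T=TD\ci T$, the partial-isometry relations, and purity of $\cE,\cE_*$ (purity enters only through $\cE^*\cE=\bI$ together with $\dim\ker\cE^*=\dim\fD\ci T$, which guarantees the ranges match up); (ii) verify the dilation property \eqref{dila-01} by inducting on $n$ --- the lower-triangular-in-$G$/upper-triangular-in-$G_*$ structure means $P\ci H\cU^n|_H$ telescopes to $T^n$ because every path that leaves $H$ into $G$ (resp.\ $G_*$) can only move further into $G$ (resp.\ can only have entered from $G_*$) and hence never returns to $H$; and (iii) verify minimality, i.e.\ $\cspa\{\cU^n H:n\in\Z\}=\cH$, by showing $G\subset\cspa\{\cU^n H:n\ge0\}$ and $G_*\subset\cspa\{\cU^{-n}H:n\ge0\}$, which follows since $\cU^n|_H$ has a nonzero component $VD\ci T$ (resp.\ $\cU^{-n}$ has $D\ci{T^*}V_*^*$) hitting a generating set for $G$ under $\cE$ (resp.\ for $G_*$ under $\cE_*^*$), using purity of the isometries. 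I would present (i) and (ii) as short explicit computations and (iii) as a one-paragraph density argument, citing the Nikolski--Vasyunin sources for the precise form of the statement.
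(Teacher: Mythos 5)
The paper does not prove Theorem \ref{t:MUDil} at all --- it is quoted directly from Nikolski--Vasyunin --- and your outline is precisely the standard argument given in those references: the wandering-subspace decomposition $\cH=G_*\oplus H\oplus G$ with $G=\cspa\{\cU^nL:n\ge0\}$, $L=\clos(\cU-T)H$, the block-by-block identification via the isometry identity $\|P\ci{G}\cU h\|=\|D\ci{T}h\|$, the corner entry forced by unitarity together with $D\ci{T^*}T=TD\ci{T}$, and the Wold decomposition of the pure isometries to get minimality in the converse direction. The outline is correct; the only nit is the phrase calling the $(1,1)$ entry $\cE_*^*$ ``a pure isometry'' --- it is the adjoint of one, i.e.\ a backward shift on $G_*$ --- but you clearly intend the right object.
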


The construction of the model then goes as follows. We take auxiliary Hilbert spaces $\fD$ and $\fD_*$, $\dim \fD = \dim\fD\ci{T}$, $\dim \fD_* = \dim\fD\ci{T*}$, and construct operators $\cE$ and $\cE_*$ such that $\ker \cE^* =\fD$, $\ker\cE_*^* =\fD_*$. We can do that by putting $G=\ell^2(\fD)=\ell^2(\Z_+;\fD)$, and defining $\cE (x_0, x_1, x_2, \ldots ) = (0, x_0, x_1, x_2, \ldots)$, $x_k\in \fD$, and similarly for $\cE_*$. 

Picking arbitrary partial isometries $V$ and $V_*$ with initial and final spaces as in the above Theorem \ref{MUDil} we get a minimal unitary dilation $U$ of $T$ given by \eqref{MUDil}.

\begin{rem*}
Above,  we were speaking a bit informally, by identifying $x\in \fD$ with the sequence $(x, 0, 0, 0, \ldots) \in \ell^2(\fD)$, and $x_*\in \fD_*$ with $(x_*, 0, 0, 0, \ldots) \in \ell^2(\fD)$. 

To be absolutely formal, we need to define canonical embeddings $\be: \fD\to G=\ell^2(\fD)$, $\be_*:\fD_*\to G_*=\ell^2(\fD_*)$ with
\begin{align}
\label{eq:e}
\be(x) &:=(x, 0, 0, 0, \ldots), \qquad x\in \fD, \\
\label{eq:e_*}
\be_*(x_*) &:=(x_*, 0, 0, 0, \ldots), \qquad x\in \fD_*  .
\end{align}
Then, picking arbitrary unitary operators $V:\fD\ci{T}\to \fD$, $V_*:\fD\ci{T^*}\to \fD_*$, we rewrite \eqref{MUDil} to define the corresponding unitary dilation as 
\begin{align}
\label{MUDil-01}
\cU = \left( 
\begin{array}{ccc}
 \cE_*^* &  0 & 0  \\
 D\ci{T^*}V_*^* \be_*^* & T  &  0 \\
 -\be V T^* V_*^*\be_*^* &  \be V D\ci{T} &   \cE
\end{array}
\right)  \,.
\end{align}

The reason for being so formal is that if $\dim \fD\ci{T}=\dim\fD\ci{T^*}$it is often convenient to put $\fD=\fD_*$, but we definitely want to be able to distinguish between the cases when $\fD$ is identified with  $\ker \cE$ and when with $\ker\cE_*$. 
\end{rem*}

%
%
%

\medskip

We then  define \emph{functional embeddings} $\pi:L^2(\fD)\to \cH$ and $\pi_*: L^2(\fD_*)\to \cH$ by 
\begin{align*}
\pi \left(\sum_{k\in\Z} z^k e_k \right) & = \sum_{k\in\Z}  \cU^k  \be (e_k), \qquad e_k\in \fD, \\
\pi_* \left(\sum_{k\in\Z} z^k e_{k} \right) & = \sum_{k\in\Z}  \cU^{k+1}  \be_*(e_{k}), \qquad e_{k}\in \fD_* .
\end{align*}
We refer the reader to \cite[Section 1.6]{Nik-Vas_model_MSRI_1998} or to \cite[Section 1.2]{Nik-book-v2} for the details. Note that there $\fD$ and $\fD_*$ were abstract spaces, $\dim \fD = \dim\ker\cE^*$ and $\dim \fD_*=\dim\ker\cE_*^*$, and the unitary operators $v:\fD\to\ker \cE^*$, $v_* :\fD_* \to \ker \cE_*^*$ used in the formulas there are just the canonical embeddings $\be$ and $\be_*$ in our case. 

Note that $\pi$ and $\pi_*$ are isometries. 

Note also that for $k\ge0$
\begin{align*}
\cU^k \be(e) &= \cE^k e,  &&e\in \fD,\\
\cU^{-k} \be_*(e_*) & = \cE_*^k e_*, && e_*\in \fD_*, 
\end{align*}
so
\begin{align*}
\pi (H^2(\fD)) = G, \qquad \pi_* (H^2_-(\fD_*)) = G_* .
\end{align*}

The characteristic function is then defined as follows. We consider
the operator $\btheta=\pi_*^*\pi :L^2(\fD)\to L^2(\fD_*)$. It is easy to check that $M_z\btheta = \btheta M_z$, so the $\btheta$ is a multiplication by a function $\theta\in L^\infty(\fD\tto \fD_*)$. It is not hard to check that $\btheta $ is a contraction, so $\|\theta\|_\infty\le 1$.  Since 
\begin{align*}
\pi(H^2(\fD)) = G \perp G_* = \pi_*(H^2_-(\fD_*)),
\end{align*}
we can conclude that $\theta\in H^\infty(\fD\tto \fD_*)$. 

The characteristic function $\theta=\theta\ci T$ can be explicitly computed, see \cite[Theorem 1.2.10]{Nik-book-v2},
\begin{align}
\label{CharFunction-01}
\theta_T(z) = V_*\left( -T + z D_{T^*}\left(\I\ci\cH - z T^*\right)^{-1} D_T \right) V^* \Bigm|_{\fD}, \qquad z\in \D. 
\end{align}
Note that the particular representation of $\theta$ depends on the coordinate operators $V$ and $V_*$ identifying defect spaces $\fD\ci{T}$ and $\fD\ci{T^*}$ with the abstract spaces $\fD$ and $\fD_*$. 

To construct a model (more precisely its particular transcription) we need to construct a unitary map $\Psi$ between the space $\cH$ of the minimal unitary dilation $\cU$ and its spectral representation. 

Namely, we represent $\cU$ as a multiplication operator in some subspace $\wt\cK=\wt\cK_\theta$ of $L^2(\fD_*\oplus \fD)$ or its weighted version. 

We need to construct a unitary operator $\Psi:\cH\to \wt \cK$ intertwining $\cU$ and $M_z$ on $\wt\cK$, i.e. such that 
\begin{align}
\label{eq:Psi-01}
\Psi \cU = M_z \Psi . 
\end{align}

Note that if $T$ is a completely non-unitary contraction, then $\pi(L^2(\fD)) + \pi_* (L^2(\fD_*))$ is dense in $\cH$. 

So, for $\Psi$ to be unitary it is necessary and sufficient that $\Psi^*$ acts isometrically on  $\pi (L^2(\fD))$ and on $\pi_*( L^2(\fD_*))$, and that for all $f\in L^2(\fD)$, $g\in L^2(\fD_*)$
\begin{align}
\label{pi^pi*}
(\Psi^* \pi f , \Psi^*\pi g)\ci{\wt\cK} = (\pi f, \pi_* g)\ci{\cH}   =(\btheta f, g)\ci{L^2(\fD_*)} ;
\end{align}
the last equality here is just the definition of $\btheta$. 

Of course, we need $\Psi^*$ to be onto, but that can be easily accomplished by  restricting the target space $\wt\cK$ to $\Ran \Psi^*$.

Summing up, we have:

\[
\begin{array}{ccccccc}
\cH&=&G&\oplus& H &\oplus&G_*\vspace{.4cm}\\\vspace{.4cm}
\,\,\,\phantom{\hat{\varphi}}\Bigg\downarrow{\Psi^*}
&&\quad\,\,\phantom{\hat{\varphi}}\Bigg\downarrow{\Psi^*|\ci{G}}
&&\quad\quad\,\phantom{\hat{\varphi}}\Bigg\downarrow{\Psi^*|\ci{H}}
&&\,\,\quad\phantom{\hat{\varphi}}\Bigg\downarrow{\Psi^*|\ci{G_*}}\\
\widetilde\cK&=&\mathcal{G}&\oplus&\cK_\te&\oplus&\mathcal{G}_*
\end{array}
\]

\subsection{Pavlov transcription}
Probably the easiest way to construct the model is to take $\wt \cK$ to be the weighted space $L^2(\fD_*\oplus\fD , W)$ where the weight $W$ is picked to make the simplest operator $\Psi^*$ to an isometry, and  is given by
\begin{align}
\label{PavlovWeight}
W (z)= \left(\begin{array}{cc}\bI\ci{\fD_*} & \theta (z)\\ \theta(z)^* & \bI\ci{\fD} \end{array}\right).
\end{align}
Now operator $\Psi^*$ is defined on $\pi(L^2(\fD))$ and on $\pi_* (L^2(\fD_*))$ as 
\begin{align}
\notag
\Psi^*\Bigl(\sum_{k\in\Z}\cU^k \be(e_k) \Bigr) & = \sum_{k\in\Z} z^k 
\left(\begin{array}{c}0 \\e_k \end{array}\right), \qquad  e_k\in\fD,    \\
\label{Psi*Pavlov-02}
\Psi^*\Bigl(\sum_{k\in\Z}\cU^k \be_*(e_{k}) \Bigr) & = \sum_{k\in\Z} z^{k-1} 
\left(\begin{array}{c}e_k \\ 0\end{array}\right) , \qquad e_{k}\in\fD_*\,,
\end{align}
or equivalently
\begin{align*}
\Psi^* (\pi f ) & =  
\left(\begin{array}{c}0 \\ f \end{array}\right), \qquad  f\in L^2(\fD),    \\
\Psi^*(\pi_* f) & =  
\left(\begin{array}{c} f \\ 0\end{array}\right) , \qquad f \in L^2(\fD_*)\,,
\end{align*}
The incoming and outgoing spaces $\cG_*= \Psi^* G_*$, $\cG= \Psi^* G$ are given by 
\begin{align*}
\cG_* :=  \clos\ci{\wt\cK}\left\{\left(\begin{array}{c} f \\ 0 \end{array} \right): f\in H^2_-(\fD_*) \right\},  \qquad 
\cG :=  \clos\ci{\wt\cK}\left\{\left(\begin{array}{c} 0 \\ f \end{array} \right): f\in H^2(\fD) \right\}, 
\end{align*}
and the model space $\cK=\cK_\theta$ is defined as 
\[
\cK_\theta= \wt\cK \ominus (\cG_*\oplus\cG). 
\]

\subsection{Sz.-Nagy--\texorpdfstring{Foia\c{s}}{Foias} transcription}
This transcription appears when one tries to make the operator $\Psi^*$ to act into a non-weighted space $L^2(\fD_*\oplus\fD)$. We make the action of the operator $\Psi^*$ on $\pi_*(L^2(\fD_*))$ as simple as possible, 
\begin{align}
\label{Psi*N-F-01}
\Psi^*\Bigl(\sum_{k\in\Z}\cU^k \be_*(e_{k}) \Bigr) & = \sum_{k\in\Z} z^{k-1} 
\left(\begin{array}{c}e_k \\ 0\end{array}\right), \qquad e_{k}\in\fD_*
\end{align}
(this is exactly as in \eqref{Psi*Pavlov-02}). Action of $\Psi^*$ on $\pi(L^2(\fD))$ is defined as
\begin{align}
\label{Psi*N-F-02}
\Psi^*\Bigl(\sum_{k\in\Z}\cU^k \be(e_{k}) \Bigr) & = \sum_{k\in\Z} z^{k} 
\left(\begin{array}{c}\theta e_k \\ \Delta e_k\end{array}\right), \qquad e_{k}\in\fD\,,
\end{align}
where $\Delta(z) = (\bI - \theta(z)^*\theta(z))^{1/2}$. The equations \eqref{Psi*N-F-01} and \eqref{Psi*N-F-02} can clearly be rewritten as 
\begin{align}
\label{Psi*N-F-03}
\Psi^* (\pi f ) & =  
\left(\begin{array}{c}\theta f \\ \Delta f \end{array}\right), \qquad  f\in L^2(\fD),    \\
\label{Psi*N-F-04}
\Psi^*(\pi_* f) & =  
\left(\begin{array}{c} f \\ 0\end{array}\right) , \qquad f \in L^2(\fD_*)\, . 
\end{align}
Note, that $\theta$ in the top entry in \eqref{Psi*N-F-02} and \eqref{Psi*N-F-03} is necessary to get 
\eqref{pi^pi*}; after \eqref{Psi*N-F-01} (equivalently \eqref{Psi*N-F-04}) is chosen, one does not have any choice here. The term $\Delta$ in the bottom entry of \eqref{Psi*N-F-02} and \eqref{Psi*N-F-03} is there to make $\Psi^*$ act isometrically on $\pi(L^2(\fD))$. There is some freedom here; one can left multiply $\Delta$ by any operator-valued function $\phi$ such that $\phi(z)$ acts isometrically on $\Ran \Delta(z)$. However, picking just $\Delta$ is the canonical choice for   the Sz.-Nagy--Foia\c{s} transcription, and we will follow it. 

The incoming and outgoing spaces are given by 
\begin{align*}
\cG_* := \left(\begin{array}{c} H^2_-(\fD_*) \\ 0\end{array}\right), \qquad
\cG := \left(\begin{array}{c}\theta \\ \Delta\end{array}\right) H^2(\fD) 
. 
\end{align*}

The model space is given by 
\begin{align}
\label{N-F-K_theta}
\cK_\theta := 
\left(\begin{array}{c} L^2(\fD_*) \\ \clos \Delta L^2(\fD) \end{array}\right) \ominus 
(\mathcal{G}_*\oplus\mathcal{G})
=
\left(\begin{array}{c} H^2(\fD_*) \\ \clos \Delta L^2(\fD) \end{array}\right) \ominus 
\left(\begin{array}{c}\theta \\ \Delta\end{array}\right) H^2(\fD) . 
\end{align}

\begin{rem*}
While the orthogonal projection from
\[
\left(\begin{array}{c} L^2(\fD_*) \\ \clos \Delta L^2(\fD) \end{array}\right)
\quad\text{to}\quad
\left(\begin{array}{c} L^2(\fD_*) \\ \clos \Delta L^2(\fD) \end{array}\right)\ominus \mathcal{G}_*
\]
is rather simple, the one from
\[
\left(\begin{array}{c} L^2(\fD_*) \\ \clos \Delta L^2(\fD) \end{array}\right)
\quad\text{to}\quad
\left(\begin{array}{c} L^2(\fD_*) \\ \clos \Delta L^2(\fD) \end{array}\right)\ominus \mathcal{G}
\]
involves the range of a Toeplitz operator.
\end{rem*}

\subsection{De Branges--Rovnyak transcription} This transcription looks most complicated, but its advantage is that both coordinates are analytic functions. To describe this transcription, we use the auxiliary weight $W = W(z)$ as in the Pavlov transcription, see \eqref{PavlovWeight}. 
The model space is the subspace of $L^2(\fD_*\oplus \fD, W^{[-1]})$, where for a self-adjoint operator $A$ the symbol $A^{[-1]}$ denotes its Moore--Penrose (pseudo)inverse, i.e.~$A^{[-1]}=0$ on $\Ker A$ and $A^{[-1]}$ is the left inverse of $A$ on $(\Ker A)^\perp$. 

The operator $\Psi^*: \cH \to L^2(\fD_*\oplus \fD, W^{[-1]})$ is defined by 
\begin{align*}
\Psi^* (\pi f ) & =  W \left(\begin{array}{c} 0 \\ f \end{array}\right) =
\left(\begin{array}{c}  \theta f\\  f\end{array}\right), \qquad  f\in L^2(\fD),    \\
\Psi^*(\pi_* f) & =  W \left(\begin{array}{c} f \\ 0\end{array}\right) =
\left(\begin{array}{c}  f \\ \theta^* f\end{array}\right) , \qquad f \in L^2(\fD_*)\, . 
\end{align*}
The incoming and outgoing spaces are
\begin{align*}
\cG_*:= \left(\begin{array}{c}  \bI \\ \theta^* \end{array}\right) H^2(\fD_*), \qquad 
\cG:= \left(\begin{array}{c}  \theta \\ \bI \end{array}\right) H^2(\fD),  
\end{align*}
and the model space is defined as 
\begin{align*}
\cK_\theta := \left\{  \left(\begin{array}{c}  f \\ g \end{array}\right) : f\in H^2(\fD_*),\  g\in H^2_-(\fD), \ g-\theta^* f \in \Delta L^2(\fD) \right\},
\end{align*}
see \cite[Section 3.7]{Nik-Vas_model_MSRI_1998} for the details (there is a typo in \cite[Section 3.7]{Nik-Vas_model_MSRI_1998}, in the definition of $\cK_\theta$ on p.~251 it should be $f\in H^2(E_*)$, $g\in H^2(E)$) .

\subsection{Parametrizing operators for the model, agreeing with coordinate operators}
The parametrizing operators that agree with the coordinate operators $V$ and $V_*$ are described in the following lemma, which holds for any transcription of the model. 

Let $T$ be a c.n.u.~contraction, and let $V:\fD\ci{T} \to \fD$ and $V_*:\fD\ci{T^*} \to \fD_*$ be coordinate operators for the defect spaces of $T$. Let $\theta=\theta\ci{T} = \theta\ci{T, V, V_*}\in H^\infty(\fD\tto \fD_*)$ be the characteristic function of $T$, defined by \eqref{CharFunction-01}, and let  
$\cM_\theta$ be the corresponding model operator (in any transcription).

Recall that $\Psi $ is a unitary operator intertwining the minimal unitary dilation $\cU$ of $T$ and the multiplication operator $M_z$ in the corresponding function space, see \eqref{eq:Psi-01}. The operator $\Psi$ determines transcription of the model, so for any  particular transcription it is known. 

Define
\begin{align}
\label{eq:tilde e}
\wt \be := \Psi^*\be, \qquad \wt\be_* := \Psi^* \be_*, 
\end{align}
where the embedding $\be$ and $\be_*$ are defined by \eqref{eq:e}, \eqref{eq:e_*}. 

\begin{lem}
\label{l:C-C*}
Under the above assumptions the parametrizing operators $\bC_*:\fD_*\to\fD\ci{\cM_\te^*}$ and $\bC:\fD\to\fD\ci{\cM_\te}$ given by 
\begin{align}
\label{C*_CoFree}
\bC_* e_* &= \left(D\ci{\cM_\theta^*}\bigm|_{\fD_{\cM_\theta^*}}\right)^{-1} P\ci{\cK_\theta} M_z \wt\be_* (e_*) , \qquad e_*\in \fD_*, 
\\
\label{C_CoFree}
\bC e &= \left(D\ci{\cM_\theta}\bigm|_{\fD_{\cM_\theta}}\right)^{-1} P\ci{\cK_\theta} M_{\bar z} \wt\be (e) , \qquad e\in\fD,  
\end{align}
agree with the coordinate operators $V$ and $V_*$. 
\end{lem}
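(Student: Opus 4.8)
The plan is to verify that the operators $\bC$ and $\bC_*$ defined by \eqref{C*_CoFree} and \eqref{C_CoFree} satisfy the agreement conditions \eqref{agree-01}, i.e.\ that there is a Clark operator $\Phi$ (a unitary intertwiner of $T$ and $\cM_\theta$) with $\bC^* = V\Phi|_{\fD\ci{\cM_\theta}}$ and $\bC_*^* = V_*\Phi|_{\fD\ci{\cM_\theta^*}}$. The natural candidate for $\Phi^*$ is the restriction $\Psi^*|_{H}$, where $H\subset\cH$ sits inside the minimal unitary dilation and $\Psi$ is the unitary realizing the transcription; indeed the diagram at the end of the construction section shows $\Psi^*|_{H}$ maps $H$ onto $\cK_\theta$, and since $\Psi\cU = M_z\Psi$ and $\cM_\theta = P\ci{\cK_\theta}M_z|_{\cK_\theta}$ while $T = P\ci H\cU|_H$, the compression relation transfers to give $\cM_\theta(\Psi^*|_H) = (\Psi^*|_H)T$. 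So set $\Phi = (\Psi^*|_H)^* = P\ci H\Psi|_{\cK_\theta}$; one checks it is unitary because $H\perp G\oplus G_*$ in the relevant sense (complete non-unitarity).

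Next I would identify, inside the dilation space, the images of the defect spaces. The key computation is that for $e\in\fD$, the vector $\be(e)\in G$ and $\cU^{-1}\be(e)$ — or rather the appropriate combination — has a component in $H$ that equals (a unitary multiple of) $D\ci T V^* e$, by the explicit block form \eqref{MUDil-01} of $\cU$; likewise for $\be_*$, $\cU\be_*(e_*)$ has $H$-component $D\ci{T^*}V_*^* e_*$. Transporting through $\Psi^*$ and using \eqref{eq:tilde e}, this says $P\ci{\cK_\theta}M_z\wt\be_*(e_*) = \Phi^*D\ci{T^*}V_*^* e_*$ and $P\ci{\cK_\theta}M_{\bar z}\wt\be(e) = \Phi^*D\ci T V^* e$, up to checking that $\Phi^*$ indeed implements $P\ci H$ composed with $\Psi$. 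Because $D\ci{T^*}V_*^*$ maps $\fD_*$ isometrically onto a dense-in-$\fD\ci{T^*}$ subspace and $\Phi$ is unitary intertwining, one gets $\Phi^*\fD\ci{T^*}\subset\fD\ci{\cM_\theta^*}$ (this is where Lemma \ref{l:Tdefect} and the standard fact $\Phi^*D\ci{T^*} = D\ci{\cM_\theta^*}\Phi^*$ on defect spaces enter). Then applying $(D\ci{\cM_\theta^*}|_{\fD\ci{\cM_\theta^*}})^{-1}$ to both sides of the displayed identity and comparing with \eqref{C*_CoFree} gives $\bC_* e_* = (D\ci{\cM_\theta^*}|_{\fD\ci{\cM_\theta^*}})^{-1}\Phi^*D\ci{T^*}V_*^* e_* = \Phi^*V_*^* e_*$, since $D\ci{\cM_\theta^*}\Phi^* = \Phi^*D\ci{T^*}$ on $\fD\ci{T^*}$. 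That is exactly $\bC_* = \Phi^*V_*^*$, equivalently $\bC_*^* = V_*\Phi$, which is the second half of \eqref{agree-01}. The argument for $\bC$ is symmetric, using $M_{\bar z}$, $\be$, $V^*$, and $D\ci T$.

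I would organize the write-up as: (1) recall $\Phi := P\ci H\Psi|_{\cK_\theta}$ is unitary and intertwines as required; (2) compute the $H$-components of $\cU\be_*(e_*)$ and $\cU^{-1}\be(e)$ from \eqref{MUDil-01}, obtaining $D\ci{T^*}V_*^* e_*$ and $D\ci T V^* e$ respectively (the other components land in $G_*\oplus G$ and are killed by $P\ci H$, hence by the projection onto $\cK_\theta$ after applying $\Psi^*$); (3) translate via $\Psi^*$ and \eqref{eq:tilde e} into statements about $P\ci{\cK_\theta}M_z\wt\be_*$ and $P\ci{\cK_\theta}M_{\bar z}\wt\be$; (4) use $\Phi^*D\ci{T^*} = D\ci{\cM_\theta^*}\Phi^*$ (standard: unitary intertwiners commute with defect operators) to cancel $(D\ci{\cM_\theta^*}|_{\fD\ci{\cM_\theta^*}})^{-1}$ and conclude $\bC_* = \Phi^* V_*^*$, $\bC = \Phi^* V^*$.

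The main obstacle I expect is step (3)–(4): one must be careful that $P\ci{\cK_\theta}M_z\wt\be_*(e_*)$ really equals $\Phi^*$ applied to the $H$-component of $\cU\be_*(e_*)$, and not to all of $\cU\be_*(e_*)$. This requires knowing that $\Psi^*$ carries $G$ and $G_*$ into $\cG$ and $\cG_*$ (the outgoing/incoming subspaces that are annihilated by $P\ci{\cK_\theta}$), which is built into the transcription construction, and that $M_z\Psi^* = \Psi^*\cU$, so $M_z\wt\be_*(e_*) = \Psi^*\cU\be_*(e_*)$ before projecting. A secondary subtlety is that $D\ci{\cM_\theta^*}$ is only invertible on $\fD\ci{\cM_\theta^*}$, so one must first establish $\Phi^*\fD\ci{T^*}\subseteq\fD\ci{\cM_\theta^*}$ (via the intertwining of defect operators) before the pseudo-inverse-free cancellation is legitimate; the same care applies on the $\bC$ side.
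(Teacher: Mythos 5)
Your proposal is correct and follows essentially the same route as the paper: both arguments conjugate the block form \eqref{MUDil-01} of the minimal unitary dilation by the transcription map $\Psi$, identify the Clark operator with $\Psi|_{\cK_\theta}$, and read off $P\ci{\cK_\theta} M_z \wt\be_*(e_*) = D\ci{\cM_\theta^*}\Phi^* V_*^* e_*$ (and its counterpart for $\bC$) before cancelling the defect operator. The paper conjugates the entire matrix identity and compares the $(2,1)$ and $(3,2)$ blocks, whereas you apply $\cU^{\pm1}$ to the specific vectors $\be_*(e_*)$, $\be(e)$ and extract the $H$-component; this is the same computation, and your explicit use of $\Phi^* D\ci{T^*} = D\ci{\cM_\theta^*}\Phi^*$ is exactly what the paper uses implicitly when writing the conjugated block matrix.
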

\begin{rem*}It follows from the equation \eqref{MUDil-02}  below that $P\ci{\cK_\theta} M_z \wt\be_* (e_*)  \in \Ran D\ci{\cM_\theta^*}$ as well as $P\ci{\cK_\theta} M_{\bar z} \wt\be (e) \in \Ran D\ci{\cM_\theta}$, so everything in \eqref{C*_CoFree}, \eqref{C_CoFree} is well defined. 
\end{rem*}

\begin{proof}[Proof of Lemma \ref{l:C-C*}]
Right and left multiplying \eqref{MUDil-01} by $\Psi$ and $\Psi^*$ respectively, we get
\begin{align}
\label{MUDil-02}
\Psi^* \cU \Psi = \left( 
\begin{array}{ccc}
 \wt\cE_*^* &  0 & 0  \\
 D\ci{\cM_\theta^*}\bC_* \wt\be_*^* & \cM_\theta  &  0 \\
 -\wt\be \bC^* \cM_\theta^* \bC_*\wt\be_*^* &  \wt\be \bC^* D\ci{\cM_\theta} &   \wt\cE
\end{array}
\right)  \,, 
\end{align}
where $\wt \cE= \Psi^* \cE\Psi$, $\wt \cE_*=  \Psi \cE_*\Psi$,  $\bC^* =  V\Psi$, $\bC_*^* =  V_*\Psi$, $\wt \be= \Psi^*\be$, $\wt \be_*=\Psi^*\be_*$. 

The operators $\wt\be$ and $\wt\be_*$ are the canonical embeddings of $\fD$ and $\fD_*$ into $\cG$ and $\cG_*$ that agree with the canonical embeddings $\be$ and $\be_*$. The operators $\bC$ and $\bC_*$ are the parameterizing operators for the defect spaces of the model operator $\cM_\theta$ that agree with the coordinate operators $V$ and $V_*$ for the defect spaces of the operator $T$. 

In any particular transcription of the model, the operator $\Psi^*\cU\Psi$ is known (it is just the multiplication by $z$ in an appropriate function space), so we get from the decomposition  \eqref{MUDil-02} 
\begin{align*}
D\ci{\cM_\theta^*}\bC_* \wt\be_*^* = P\ci{\cK_\theta} M_z \Bigm|_{\cG_*} , \qquad
D\ci{\cM_\theta}\bC \wt\be^*  = P\ci{\cK_\theta} M_{\bar z} \Bigm|_{\cG_*} .
\end{align*}
Right and left multiplying the first identity by $\be_*$ and $\Bigl(D\ci{\cM_\theta^*}\bigm|_{\fD_{\cM_\theta^*}}\Bigr)^{-1}$ respectively, we get \eqref{C*_CoFree}. Similarly, to get \eqref{C_CoFree} we just  right and left multiply the second identity by $\be$ and $\left(D\ci{\cM_\theta}\bigm|_{\fD_{\cM_\theta}}\right)^{-1}$. 
%
%
%
\end{proof}

Applying the above Lemma \ref{l:C-C*} to a particular transcription of the model, we can get  more concrete formulas for $\bC$, $\bC_*$ just in terms of characteristic function $\theta$. 
For example, the following lemma gives formulas for $\bC$ and $\bC_*$ in the Sz.-Nagy--Foia\c{s} transcription.

\begin{lem}\label{l:C_N-F}
Let $T$ be a c.n.u.~contraction, and let $\cM_\theta $ be its model in Sz.-Nagy--Foia\c{s} transcription, with the characteristic function $\theta=\theta\ci{T, V, V_*}$, $\theta\in H^\infty(\fD\tto\fD_*)$. 

Then the maps $\bC_*:\fD_*\to\fD\ci{\cM_\te^*}$ and $\bC:\fD\to\fD\ci{\cM_\te}$ given by
\begin{align}\label{C*_N-F}
\bC_* e_* &=
\kf{ \OID-{\te} (z){\te}^\ast(0)}{ -\Delta (z){\te}^\ast(0) } \left( \OID- {\te}(0){\te}^\ast(0) \right)^{-1/2} e_*,& e_*&\in \fD_*,
\\
\label{C_N-F}
\bC e & =
\kf{ z^{-1} \left({\te}(z)- {\te}(0)\right)}{ z^{-1} \Delta (z)}\left( \OID- {\te}^\ast(0){\te}(0) \right)^{-1/2} e,&e&\in \fD,
\end{align}
agree with the coordinate operators $V$ and $V_*$. 
\end{lem}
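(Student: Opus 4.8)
The strategy is to derive \eqref{C*_N-F} and \eqref{C_N-F} from the coordinate-free formulas \eqref{C*_CoFree} and \eqref{C_CoFree} of Lemma~\ref{l:C-C*} by specializing to the Sz.-Nagy--Foia\c s transcription. The three ingredients I would assemble are: (i) explicit formulas for the embeddings $\wt\be=\Psi^*\be$ and $\wt\be_*=\Psi^*\be_*$ in this transcription, read off from \eqref{Psi*N-F-03}, \eqref{Psi*N-F-04}; (ii) a formula for the orthogonal projection $P\ci{\cK_\theta}$ onto the model space, or at least enough of it to compute $P\ci{\cK_\theta}M_z\wt\be_*(e_*)$ and $P\ci{\cK_\theta}M_{\bar z}\wt\be(e)$; and (iii) an identification of the defect operators $D\ci{\cM_\theta}$, $D\ci{\cM_\theta^*}$ restricted to their defect spaces, so that the factors $(D\ci{\cM_\theta^*}|_{\fD_{\cM_\theta^*}})^{-1}$ and $(D\ci{\cM_\theta}|_{\fD_{\cM_\theta}})^{-1}$ can be turned into the scalar-type factors $(\OID-\te(0)\te^*(0))^{-1/2}$ and $(\OID-\te^*(0)\te(0))^{-1/2}$.

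\textbf{Key steps.} First, using \eqref{Psi*N-F-04} with $f=$ the constant $\fD_*$-valued function $e_*$ (which lies in $H^2_-(\fD_*)$ only in the $k=0$ sense; more precisely $\wt\be_*(e_*)=\Psi^*\pi_*(\,\text{const }e_*\,)=\binom{e_*}{0}$ as a constant function), we get $M_z\wt\be_*(e_*)=\binom{z e_*}{0}$. Then I project onto $\cK_\theta$. Since $\binom{ze_*}{0}\in\binom{H^2(\fD_*)}{\clos\Delta L^2(\fD)}$, I only need to subtract its projection onto $\cG=\binom{\theta}{\Delta}H^2(\fD)$. Because $\binom{\theta}{\Delta}$ is an isometry pointwise (as $\theta^*\theta+\Delta^2=\bI$), the projection onto $\cG$ is $P_{\cG}F=\binom{\theta}{\Delta}P_+\!\bigl(\theta^*F_1+\Delta F_2\bigr)$ where $P_+$ is the Riesz projection onto $H^2(\fD)$; applied to $F=\binom{ze_*}{0}$ this gives $\binom{\theta}{\Delta}P_+(z\theta^*e_*)=\binom{\theta}{\Delta}(z\theta^*e_* - \te^*(0)e_*)/\!\!\ldots$ — here one uses that $P_+(z\te^*(z)e_*) = z\te^*(z)e_* - \te^*(0)e_*$ after dividing appropriately, more cleanly $P_+(z\,\te^*e_*)=z(\te^*e_*-\te^*(0)e_*)$ interpreting $\te^*$ via $\overline{\te(\bar z)}$-type conventions; the upshot is that $P\ci{\cK_\theta}M_z\wt\be_*(e_*)=\binom{ze_* - \theta(z)(z\theta^*(z)-\,\widetilde{\phantom{x}})e_*}{-\Delta(z)(\ldots)e_*}$, and after the dust settles the top entry becomes $(\OID-\te(z)\te^*(0))e_*$ and the bottom $-\Delta(z)\te^*(0)e_*$, matching the column vector in \eqref{C*_N-F} before the normalizing factor. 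The analogous computation with $M_{\bar z}\wt\be(e)$, $\wt\be(e)=\binom{\te e}{\Delta e}$ (constant-coefficient), projecting off $\cG_*=\binom{H^2_-(\fD_*)}{0}$, produces $\binom{z^{-1}(\te(z)-\te(0))}{z^{-1}\Delta(z)}e$.

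Second, I must show the normalizing operator $(D\ci{\cM_\theta^*}|_{\fD_{\cM_\theta^*}})^{-1}$ acts on the resulting vector as right multiplication by $(\OID-\te(0)\te^*(0))^{-1/2}$. The cleanest route: the vector computed above, call it $\bC_*^0 e_*$, should satisfy $\|\bC_*^0 e_*\|^2 = \bigl((\OID-\te(0)\te^*(0))e_*,e_*\bigr)$, so that $D\ci{\cM_\theta^*}\bC_*^0 = $ (that vector) and the needed inverse factor is exactly $(\OID-\te(0)\te^*(0))^{-1/2}$; one verifies the norm identity by a direct $L^2$ computation using $\te^*\te+\Delta^2=\bI$ and $\int|z^{\pm1}|=1$, together with the orthogonality of $\cK_\theta$ to $\cG_*\oplus\cG$ (which makes cross terms vanish). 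Alternatively, invoke \eqref{MUDil-02}: the $(2,1)$ block there literally says $D\ci{\cM_\theta^*}\bC_*\wt\be_*^*=P\ci{\cK_\theta}M_z|_{\cG_*}$, and since $\wt\be_*$ is a coisometry onto $\cG_*$ one reads off $D\ci{\cM_\theta^*}\bC_* e_* = P\ci{\cK_\theta}M_z\wt\be_*(e_*)=\bC_*^0 e_*$; combined with $\bC_*$ being required to be \emph{unitary} onto $\fD\ci{\cM_\theta^*}$, the polar-decomposition uniqueness forces $\bC_* = \bC_*^0 (\,\text{positive factor}\,)^{-1}$ with the positive factor equal to $((\bC_*^0)^*\bC_*^0)^{1/2}=(\OID-\te(0)\te^*(0))^{1/2}$, giving \eqref{C*_N-F}. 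The $\bC$ case is symmetric with $\te\leftrightarrow\te^*$, $z\leftrightarrow\bar z$, $\fD\leftrightarrow\fD_*$.

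\textbf{Main obstacle.} The delicate point is the Riesz/Toeplitz projection bookkeeping in step one: computing $P\ci{\cK_\theta}$ applied to $M_z\wt\be_*(e_*)$ and to $M_{\bar z}\wt\be(e)$ requires care because (as the remark after \eqref{N-F-K_theta} warns) projecting off $\cG$ involves the range of a Toeplitz operator and the second component lives in $\clos\Delta L^2(\fD)$ rather than an $H^2$ space. I would handle this by working at the level of the dense set of trigonometric polynomials and using the pointwise isometry of $\binom{\theta}{\Delta}$ to reduce $P_{\cG}$ to $\binom{\theta}{\Delta}P_+(\theta^*\cdot+\Delta\cdot)$, then checking that the output indeed lands in $\cK_\theta$ (i.e.\ is orthogonal to both $\cG$ and $\cG_*$), which simultaneously confirms correctness and lets me drop error terms. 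Everything else — the geometric-series / constant-term identities for $P_+$ and the norm computation — is routine once this reduction is in place.
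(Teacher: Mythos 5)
Your overall route is the same as the paper's: specialize the coordinate-free formulas \eqref{C*_CoFree}, \eqref{C_CoFree} of Lemma \ref{l:C-C*} to the Sz.-Nagy--Foia\c s transcription, compute the two projections onto $\cK_\theta$ explicitly, and then convert the abstract normalizing factor $\bigl(D\ci{\cM_\theta^{(*)}}\bigm|_{\fD_{\cM_\theta^{(*)}}}\bigr)^{-1}$ into right multiplication by $(\OID-\te(0)\te^*(0))^{-1/2}$ (resp.\ $(\OID-\te^*(0)\te(0))^{-1/2}$). Your handling of that last step differs from the paper's: the paper proves an intertwining identity $\f\bigl((\bI-\cM_\theta\cM_\theta^*)|_{\fD_{\cM_\theta^*}}\bigr)P\ci{\cK_\theta}\kf{e_*}{0}=\kf{\bI-\te\te(0)^*}{-\Delta\te(0)^*}\f(\bI-\te(0)\te^*(0))e_*$ for polynomials $\f$ and extends to $\f(x)=x^{-1/2}$, whereas you read the un-normalized map off the $(2,1)$ block of \eqref{MUDil-02}, compute $(\bC_*^0)^*\bC_*^0=\bI-\te(0)\te(0)^*$ by a direct $L^2$ norm calculation, and invoke uniqueness of the polar decomposition. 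That alternative is legitimate and arguably cleaner, provided you note that $\bC_*$ unitary onto $\fD\ci{\cM_\theta^*}$ forces $D\ci{\cM_\theta^*}\bC_*=\bC_*\,((\bC_*^0)^*\bC_*^0)^{1/2}$ by functional calculus, so that $\bC_*^0=\bC_*\,|\bC_*^0|$ really is the polar decomposition.

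There is, however, a concrete error in your first step for $\bC_*$. From \eqref{Psi*N-F-01} with $k=0$ one gets $\wt\be_*(e_*)=\Psi^*\be_*(e_*)=z^{-1}\kf{e_*}{0}$, not the constant function $\kf{e_*}{0}$; indeed $\be_*(e_*)\in G_*$ and $\Psi^*G_*=\cG_*=\kf{H^2_-(\fD_*)}{0}$, so $\wt\be_*(e_*)$ cannot be a constant. Consequently $M_z\wt\be_*(e_*)$ is the \emph{constant} $\kf{e_*}{0}$, and the projection is clean: $P_+(\te^*e_*)=\te(0)^*e_*$, giving $P\ci{\cK_\theta}\kf{e_*}{0}=\kf{\bI-\te\te(0)^*}{-\Delta\te(0)^*}e_*$. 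With your starting vector $\kf{ze_*}{0}$ one instead has $P_+(z\te^*(z)e_*)=z\te(0)^*e_*+\hat\te(1)^*e_*$, so the projection picks up the first Taylor coefficient of $\te$ and an extra factor of $z$, and does \emph{not} reduce to the column in \eqref{C*_N-F}; the passage where you write ``after dividing appropriately \ldots\ after the dust settles'' is exactly where this discrepancy is being swept under the rug. The fix is a one-line correction of $\wt\be_*$, after which your computation (and your norm identity $\|\bC_*^0e_*\|^2=((\bI-\te(0)\te(0)^*)e_*,e_*)$) goes through; the $\bC$ half, where you correctly take $\wt\be(e)=\kf{\te e}{\Delta e}$, is fine as written.
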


\begin{proof}
To prove \eqref{C*_N-F} we will use \eqref{C*_CoFree}. It follows from \eqref{Psi*N-F-01} that 
\[
\wt\be_*(e_*)=z^{-1} \left(\begin{array}{c} e_* \\0\end{array}\right), 
\]
so by \eqref{C*_CoFree}
\begin{align}
\label{C*_N-F-02}
\bC_* e_* = (\bI - \cM_\theta\cM_\theta^*)\bigm|_{\fD_{\cM_\theta^*}}^{-1/2} P\ci{\cK_\theta} \left(\begin{array}{c} e_* \\0\end{array}\right),\qquad e_*\in\fD_*. 
\end{align}
It is not hard to show that 
\begin{align}
\label{P_theta_N-F}
P\ci{\cK_\theta}  \left(\begin{array}{c} e_* \\0\end{array}\right) = \left(\begin{array}{c} \bI - \theta\theta(0)^* \\ -\Delta\theta(0)^*\end{array}\right) e_*\,.
\end{align}
One also can compute
\begin{align}
\label{comm-defects-00}
(\bI-\cM_\theta\cM_\theta^*)  \left(\begin{array}{c} f \\ g\end{array}\right) = \left(\begin{array}{c} \bI - \theta\theta(0)^* \\ -\Delta\theta(0)^*\end{array}\right) f(0), \qquad \left(\begin{array}{c} f \\ g\end{array}\right)\in\cK_\theta. 
\end{align}
Combining the above identities we get that 
\begin{align}
\label{comm-defects-01}
(\bI-\cM_\theta\cM_\theta^*) P\ci{\cK_\theta}  \left(\begin{array}{c} e_* \\0\end{array}\right) 
=\left(\begin{array}{c} \bI - \theta\theta(0)^* \\ -\Delta\theta(0)^*\end{array}\right) 
(e_*- \theta(0)\theta^*(0) e_*).
\end{align}
As we discussed above just after \eqref{C_CoFree}, $P\ci{\cK_\theta}  \left(\begin{array}{c} e_* \\0\end{array}\right)\in \Ran D\ci{\cM_\theta^*}$, so in \eqref{comm-defects-01}  we can replace 
$(\bI-\cM_\theta\cM_\theta^*)$ by its restriction onto $\fD\ci{\cM_\theta^*}$. 

Applying $(\bI-\cM_\theta\cM_\theta^*)\bigm|_{\fD\ci{\cM_\theta^*}}$ to \eqref{comm-defects-01} (with $(\bI-\cM_\theta\cM_\theta^*)$ replaced by its restriction onto $\fD\ci{\cM_\theta}$)  and using \eqref{comm-defects-00} we get
\begin{align*}
\bigl( (\bI-\cM_\theta\cM_\theta^*)\bigm|_{\fD_{\cM_\theta^*}} \bigr)^2 P\ci{\cK_\theta}  \left(\begin{array}{c} e_* \\0\end{array}\right) 
=\left(\begin{array}{c} \bI - \theta\theta(0)^* \\ -\Delta\theta(0)^*\end{array}\right) 
\bigl(\bI\ci{\fD_*}- \theta(0)\theta^*(0) \bigr)^2 e_*
\end{align*}
Applying $(\bI-\cM_\theta\cM_\theta^*)\bigm|_{\fD\ci{\cM_\theta^*}}$  to the above identity, and using again \eqref{comm-defects-00}, we get by induction that  
%
%
\begin{align}
\label{comm-defects-02}
\f\bigl( (\bI-\cM_\theta\cM_\theta^*)\bigm|_{\fD_{\cM_\theta^*}} \bigr) P\ci{\cK_\theta}  \left(\begin{array}{c} e_* \\0\end{array}\right) 
=\left(\begin{array}{c} \bI - \theta\theta(0)^* \\ -\Delta\theta(0)^*\end{array}\right) 
\f\bigl(\bI\ci{\fD_*}- \theta(0)\theta^*(0) \bigr) e_*
\end{align}
for any monomial $\f$, $\f(x) =x^n$, $n\ge0$ (the case $n=0$ is just the identity \eqref{P_theta_N-F}). 

 Linearity implies that \eqref{comm-defects-02} holds for any polynomial $\f$. 
Using standard approximation reasoning we get that $\f$ in \eqref{comm-defects-02} can be any measurable function. In particular, we can take $\f(x) =x^{-1/2}$, which together with \eqref{C*_N-F-02}   gives us \eqref{C*_N-F}.

To prove \eqref{C_N-F} we proceed similarly. Equation \eqref{Psi*N-F-02} implies that 
\begin{align*}
\wt\be(e) = \left(\begin{array}{c}\theta\\ \Delta \end{array}\right) e, 
\end{align*}
so by \eqref{C_CoFree}
\begin{align}
\label{C_N-F-02}
\bC e = \bigl((\bI - \cM\ci\theta^*\cM\ci\theta)\bigm|_{\fD_{\cM_\theta}}\bigr)^{-1/2} P\ci{\cK_\theta} M_{\bar z} \left(\begin{array}{c} \theta \\ \Delta \end{array}\right) e,\qquad e\in\fD.
\end{align}
One can see that 
\begin{align*}
P\ci{\cK_\theta} M_{\bar z} \left(\begin{array}{c} \theta \\ \Delta \end{array}\right) e =
M_{\bar z} \left(\begin{array}{c} \theta -\theta(0) \\ \Delta \end{array}\right) e, 
\end{align*}
so
\begin{align*}
\cM_\theta P\ci{\cK_\theta} M_{\bar z} \left(\begin{array}{c} \theta \\ \Delta \end{array}\right) e
=
P\ci{\cK_\theta} \left(\begin{array}{c} \theta -\theta(0) \\ \Delta \end{array}\right) e
=
-P\ci{\cK_\theta} \left(\begin{array}{c} \theta(0) \\ 0 \end{array}\right) e.
\end{align*}
Combining this with \eqref{P_theta_N-F}, we get
\begin{align*}
\cM_\theta P\ci{\cK_\theta} M_{\bar z} \left(\begin{array}{c} \theta \\ \Delta \end{array}\right) e =
\left(\begin{array}{c}   \theta\theta(0)^* -\bI \\ \Delta\theta(0)^*\end{array}\right) \theta(0) e .
\end{align*}
Using the fact that 
\begin{align*}
\cM_\theta^* \left(\begin{array}{c} f \\ g \end{array}\right) = M_{\bar z}\left(\begin{array}{c} f - f(0)\\ g \end{array}\right), 
\end{align*}
we arrive at
\begin{align*}
\cM_\theta^*\cM_\theta P\ci{\cK_\theta} M_{\bar z} \left(\begin{array}{c} \theta \\ \Delta \end{array}\right) e = M_{\bar z} \left(\begin{array}{c} \theta -\theta(0)\\ \Delta \end{array}\right) \theta(0)^*\theta(0) e, 
\end{align*}
so
\begin{align*}
(\bI - \cM_\theta^*\cM_\theta) P\ci{\cK_\theta} M_{\bar z} \left(\begin{array}{c} \theta \\ \Delta \end{array}\right)e = M_{\bar z} \left(\begin{array}{c} \theta -\theta(0)\\ \Delta \end{array}\right)(\bI -  \theta(0)^*\theta(0) )e.
\end{align*}

Using the same reasoning as in the above proof of \eqref{C*_N-F} we get that 
\begin{align}
\label{comm-defects-03}
\f\bigl( (\bI - \cM_\theta^*\cM_\theta)\bigm|_{\fD_{\cM_\theta}} \bigr)
P\ci{\cK_\theta}  & M_{\bar z} \left(\begin{array}{c} \theta \\ \Delta \end{array}\right)e 
\\
\notag
&= M_{\bar z} \left(\begin{array}{c} \theta -\theta(0)\\ \Delta \end{array}\right)\f\bigl( \bI -  \theta(0)^*\theta(0)  \bigr)e, 
\end{align}
first with $\f$ being a polynomial, and then any measurable function. 

Using \eqref{comm-defects-03} with $\f(x)=x^{-1/2}$ and taking \eqref{C_N-F-02} into account, we get \eqref{C_N-F}. 
\end{proof}

\subsection{An auxiliary lemma}
\label{s:bound-C}

We already used, and we will also need later the following simple Lemma. 

\begin{lem}
\label{l:bound-C}
Let $\cM=\cM_\theta$ be model operator on a model space $\cK_\theta\subset L^2(W;\fD_*\oplus \fD)$, and let $\bC: \fD_*\to \fD\ci{\cM_\theta}$, $\bC_*: \fD\to \fD\ci{\cM_\theta^*}$ be bounded operators.

If  $C$ and $C_*$ are the operator-valued functions, defined by 
\begin{align*}
C(z) e & = \bC e (z),  &&z \in \T, \ e\in \fD , \\
C_*(z) e_* & = \bC_* e_* (z), &&z \in \T, \ e_*\in \fD_*. 
\end{align*}
then the functions $W^{1/2} C$ and $W^{1/2}C^*$ are bounded, 
\begin{align*}
\| W^{1/2} C \|\ci{L^\infty} = \|\bC\|, \qquad \| W^{1/2} C_* \|\ci{L^\infty} = \|\bC_*\| . 
\end{align*}		
\end{lem}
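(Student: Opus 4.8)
The plan is to reduce everything to the defining property that $\bC$ (respectively $\bC_*$) is an operator into the model space $\cK_\theta \subset L^2(W;\fD_*\oplus\fD)$, and then to compute the relevant $L^\infty$-norm of the operator-valued symbol by testing against constant vectors. First I would recall that for a function $F$ in $L^2(W;\fD_*\oplus\fD)$ one has, by definition of the weighted norm,
\[
\|F\|^2_{L^2(W;\fD_*\oplus\fD)} = \int_\T (W(z)F(z),F(z))\,\dd m(z) = \int_\T \|W^{1/2}(z)F(z)\|^2\,\dd m(z) = \|W^{1/2}F\|^2_{L^2(\fD_*\oplus\fD)},
\]
so that $F\mapsto W^{1/2}F$ is an isometric identification of $L^2(W;\fD_*\oplus\fD)$ with (a subspace of) the unweighted $L^2(\fD_*\oplus\fD)$. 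Under this identification, the vector $\bC e$ corresponds to the function $z\mapsto W^{1/2}(z)C(z)e$.

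Next I would use the fact that $\bC$ is an isometry (or, in the generality stated, a bounded operator with the claimed norm) from $\fD$ into $\cK_\theta$, hence for every $e\in\fD$,
\[
\int_\T \|W^{1/2}(z)C(z)e\|^2\,\dd m(z) = \|\bC e\|^2_{\cK_\theta} \le \|\bC\|^2\|e\|^2,
\]
with equality of the norms when $\bC$ is isometric. This is an $L^2$ bound on the vector-valued function $W^{1/2}C e$; to upgrade it to the pointwise (i.e. $L^\infty$) bound on the operator-valued function $W^{1/2}C$ I would exploit the analytic structure: in any transcription the components of $\bC e$ are boundary values of functions in appropriate Hardy-type spaces (this is visible already from the explicit formulas \eqref{C*_N-F}, \eqref{C_N-F} in the Sz.-Nagy--Foia\c s transcription, and in general from \eqref{C*_CoFree}, \eqref{C_CoFree}, where the columns are given by applying $P\ci{\cK_\theta}M_z$ or $P\ci{\cK_\theta}M_{\bar z}$ to a fixed vector). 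Because $\cK_\theta$ is a reproducing-kernel-type space on which the shift $M_z$ acts, one checks that $z^n \bC e \in L^2(W;\fD_*\oplus\fD)$ with norm exactly $\|\bC e\|$ for every integer (or nonnegative integer) $n$; applying the $L^2$-bound to these and summing (equivalently, using that the relevant function, multiplied by $W^{1/2}$, has all its "wrong-side" Fourier coefficients controlled) forces $\|W^{1/2}(z)C(z)e\|$ to be essentially constant and equal to $\|\bC e\|$ for a.e.\ $z$. Polarizing in $e$ and taking the supremum over the unit ball of $\fD$ then yields $\|W^{1/2}C\|\ci{L^\infty} = \|\bC\|$, and symmetrically $\|W^{1/2}C_*\|\ci{L^\infty} = \|\bC_*\|$.

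The main obstacle is the passage from the $L^2$ bound to the pointwise $L^\infty$ bound: a priori, a bounded operator into $L^2(W)$ only controls an integral of $\|W^{1/2}(z)C(z)e\|^2$, not its essential supremum. The point to make rigorous is that the range of $\bC$ consists of functions that are, in a suitable sense, "analytic" (each $z^n\bC e$, $n\ge 0$, stays in the model space with the same norm), and hence the vector-valued map $z\mapsto W^{1/2}(z)C(z)e$ has constant norm a.e.; this is the higher-rank, weighted analogue of the elementary fact that a reproducing kernel of a shift-invariant subspace of $H^2$ has constant modulus on the circle when it is a one-dimensional wandering vector. Once this constancy is established the rest is routine: one writes $\|W^{1/2}C\|\ci{L^\infty} = \esssup_z \|W^{1/2}(z)C(z)\|$, bounds this above by $\|\bC\|$ via the isometric identification above applied to vectors of the form $\1\ci{A}(z)C(z)e$ (testing on small sets $A$ where $\|W^{1/2}(z)C(z)\|$ is nearly maximal), and bounds it below by exhibiting, for each $z$ in a set of positive measure, a unit vector $e$ nearly attaining the operator norm of $W^{1/2}(z)C(z)$.
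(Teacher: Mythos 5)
Your plan correctly isolates the real difficulty --- passing from the $L^2$ bound on the columns $W^{1/2}C(\cdot)e$ to a pointwise bound on the operator norm of $W^{1/2}(z)C(z)$ --- but the step you propose to bridge it does not work. You argue that $z^n\bC e$ lies in $L^2(W;\fD_*\oplus\fD)$ with norm $\|\bC e\|$ for every $n$ and that ``applying the $L^2$-bound to these and summing'' forces $\|W^{1/2}(z)C(z)e\|$ to be essentially constant. But on $\T$ multiplication by $z^n$ does not change the pointwise norm, $\|W^{1/2}(z)z^nC(z)e\|=\|W^{1/2}(z)C(z)e\|$, so each of these tests returns the identical integral inequality you started with; no amount of summing extracts a pointwise statement from it. Likewise, in your final step you test the multiplication operator on $\1\ci{A}(z)C(z)e$, but the bound $\|Cf\|\ci{L^2(W)}\le\|\bC\|\,\|f\|\ci{L^2(\fD)}$ for non-constant $f$ is exactly what has to be proved, so that part of the argument is circular as written.

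The missing ingredient is the wandering-subspace property of the defect space: the subspaces $z^n\fD\ci{\cM_\theta}$, $n\in\Z$, are \emph{mutually orthogonal} in $L^2(W;\fD_*\oplus\fD)$ (the transcription of the standard dilation-theoretic fact that the subspaces $\cU^n\fD\ci{T}$, $n\in\Z$, are mutually orthogonal for a unitary dilation $\cU$ of a contraction $T$). This is what the paper uses: writing $f=\sum_n z^n\hat f(n)$ one gets $Cf=\sum_n z^n\bC\hat f(n)$ as an orthogonal sum, hence $\|Cf\|^2\ci{L^2(W)}=\sum_n\|\bC\hat f(n)\|^2$, so $f\mapsto Cf$ is bounded from $L^2(\fD)$ to $L^2(W;\fD_*\oplus\fD)$ with norm exactly $\|\bC\|$; equivalently $f\mapsto W^{1/2}Cf$ is a multiplication operator between unweighted $L^2$ spaces of that norm, and the norm of a multiplication operator equals the $L^\infty$ norm of its symbol. (Incidentally, the constancy of $z\mapsto\|W^{1/2}(z)C(z)e\|$ that you are after is true, but it follows from this same orthogonality --- the Fourier coefficients $\int_\T z^k\bigl(W(z)\bC e(z),\bC e(z)\bigr)\,\dd m(z)$ vanish for $k\ne0$ --- not from the norm computation you give.) Once the orthogonality of the translates is supplied, either your route or the paper's goes through.
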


\begin{proof}
It is well-known and is not hard to show, that if $T$ is a contraction and $\cU $ is its unitary dilation, then then the subspaces $\cU^n \fD\ci T$, $n\in\Z$ (where recall $\fD\ci T$ is the defect space of $T$) are mutually orthogonal, and similarly for subspaces $\cU^n\fD\ci{T^*}$, $n\in\Z$. 	

Therefore, the subspaces $z^n \fD\ci{\cM}$, $n\in\Z$ are mutually orthogonal in $L^2(W; \fD_*\oplus \fD)$. and the same holds for the subspaces $z^n \fD\ci{\cM^*}$, $n\in\Z$. 

The subspaces $z^n \fD\subset L^2(\T;\fD)$ are mutually orthogonal, and since 
\[
C(z) \sum_{n\in\Z} z^n \hat f(n) = \sum_{n\in\Z} z^n \bC f_n , \qquad \hat f(n)\in \fD, 
\]
we conclude that the operator $f\mapsto Cf$ is a bounded operator acting $L^2(\fD)\to L^2(W;\fD_*\oplus\fD)$, and its norm is exactly $\|\bC\|$. 

But that means the multiplication  operator $f\mapsto W^{1/2}f$ between the non-weighted spaces $L^2(\fD)\to L^2(\fD_*\oplus\fD)$ is bounded with the same norm, which immediately implies that   $\| W^{1/2} C \|\ci{L^\infty} = \|\bC\|$.

The proof for $C_*$ follows similarly.  	
\end{proof}

\section{Characteristic function}\label{s-charfunc}


In this section we derive  formulas for the (matrix-valued) 
characteristic function $\theta\ci\Gamma$, see Theorem \ref{t-theta} below. 

\subsection{An inverse of a perturbation}
We begin with an auxiliary result.
\begin{lem}
\label{l:pert-01}
Let $D$ be an operator in an auxiliary Hilbert space ${\er}$ and let $B, C:{\er}\to \cH$. Then $\bI\ci\cH - CDB^*$ is invertible if and only if $\bI\ci {\er} - DB^*C$ is invertible, and if and only if $\bI\ci {\er} -B^*CD$ is invertible. 

Moreover, in this case
\begin{align}
\label{InvPert-01}
(\bI\ci\cH - CDB^*)^{-1} & = \bI\ci \cH + C (\bI\ci {\er} - DB^*C)^{-1} DB^*\\ \notag
                       & = \bI\ci \cH + C D (\bI\ci {\er} - B^*CD)^{-1}  B^*.
\end{align}
\end{lem}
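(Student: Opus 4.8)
The plan is to prove this by a direct verification (the Woodbury-type identity) together with a short algebraic observation that reduces the three invertibility statements to each other. The key algebraic fact is the pair of intertwining identities
\begin{align*}
B^*(\bI\ci\cH - CDB^*) &= (\bI\ci{\er} - B^*CD)B^*, \\
(\bI\ci\cH - CDB^*)C &= C(\bI\ci{\er} - DB^*C),
\end{align*}
which follow immediately by expanding and using associativity. From the first identity, if $\bI\ci\cH - CDB^*$ is invertible with a bounded inverse, then $(\bI\ci{\er} - B^*CD)$ is left-invertible; pairing with the analogous identity $B^*C D(\bI\ci{\er} - B^*CD)\cdots$ — more directly, one checks that $\bI\ci{\er} - B^*CD$ is both left and right invertible by exhibiting explicit one-sided inverses built from $(\bI\ci\cH - CDB^*)^{-1}$. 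The cleanest route is actually to prove the resolvent formula \eqref{InvPert-01} first as a formal identity (valid whenever the relevant inverse on the right exists) and then read off the equivalences.

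First I would establish: if $S := \bI\ci{\er} - DB^*C$ is invertible, then $R := \bI\ci\cH + C S^{-1} DB^*$ is a two-sided inverse of $\bI\ci\cH - CDB^*$. This is a routine multiplication:
\begin{align*}
(\bI\ci\cH - CDB^*)(\bI\ci\cH + CS^{-1}DB^*)
&= \bI\ci\cH + CS^{-1}DB^* - CDB^* - CDB^*CS^{-1}DB^* \\
&= \bI\ci\cH + C\bigl(S^{-1} - \bI\ci{\er} - DB^*CS^{-1}\bigr)DB^* \\
&= \bI\ci\cH + C\bigl((\bI\ci{\er} - DB^*C)S^{-1} - \bI\ci{\er}\bigr)DB^* = \bI\ci\cH,
\end{align*}
and symmetrically on the other side. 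The same computation with the roles of $DB^*C$ replaced by $B^*CD$ gives the second line of \eqref{InvPert-01}; alternatively the second form follows from the first by the (obvious) identity $C(\bI\ci{\er} - DB^*C)^{-1}D = CD(\bI\ci{\er}-B^*CD)^{-1}$, which itself is the intertwining relation $(\bI\ci{\er}-DB^*C)D = D(\bI\ci{\er} - B^*CD)$ inverted on both sides. This also shows $\bI\ci{\er} - DB^*C$ invertible $\iff$ $\bI\ci{\er} - B^*CD$ invertible.

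For the remaining direction — that invertibility of $\bI\ci\cH - CDB^*$ forces invertibility of $\bI\ci{\er} - DB^*C$ — I would use the intertwining identity $(\bI\ci\cH - CDB^*)C = C(\bI\ci{\er}-DB^*C)$ together with $DB^*(\bI\ci\cH - CDB^*) = (\bI\ci{\er}-DB^*C)DB^*$: setting $Q := \bI\ci{\er} + DB^*(\bI\ci\cH-CDB^*)^{-1}C$, a direct expansion shows $Q(\bI\ci{\er}-DB^*C) = \bI\ci{\er} = (\bI\ci{\er}-DB^*C)Q$, so $\bI\ci{\er}-DB^*C$ is invertible with inverse $Q$. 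I expect the main obstacle here to be purely bookkeeping: keeping track of which products land in $\cH$ versus in $\er$, and making sure the one-sided inverses are genuinely two-sided. No analytic subtlety is involved since $\er$ is finite-dimensional (indeed $d$-dimensional) in all applications, so "invertible" and "injective" coincide on that side, which could shorten the argument further; but I would present the ring-theoretic two-sided version as above since it needs no finite-dimensionality and is robust.
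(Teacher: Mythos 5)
Your proof is correct and follows essentially the same route as the paper's: both verify the Woodbury identity by direct multiplication and establish the equivalences of invertibility by exhibiting explicit inverses built from one another. The only cosmetic differences are that the paper first reduces to $D=\bI\ci{\er}$ by absorbing $D$ into $C$ or into $B^*$, and obtains the two-sidedness of the inverse by taking adjoints and interchanging $B$ and $C$ rather than by the symmetric computation you carry out directly.
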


We will apply this lemma for $D:\C^d\to \C^d$, so in this case the inversion of $\bI\ci\cH - CDB$ is reduced to inverting $(d\times d)$ matrix. 

This lemma can be obtained from the Woodbury inversion formula \cite{Wood}, although formally in \cite{Wood} only the matrix case was treated. 

\begin{proof}[Proof of Lemma \ref{l:pert-01}]
First let us note that it is sufficient to prove lemma with $D=\I\ci {\er}$, because $D$ can be incorporated either into $C$ or into $B^*$.  

One could guess the formula by writing the power series expansion of $\bI\ci\cH - CDB^*$, and we can get the result for the case when the series converges. This method can be made rigorous for finite rank  perturbations by considering the family $(\bI\ci\cH -\la CDB^*)^{-1}$, $\la\in\C$ and using  analytic continuation. 

However, the simplest way to prove the formula is just by performing multiplication, 
\begin{align*}
(\I\ci\cH - CB^*) \Bigl(\I\ci\cH & + C(\I\ci {\er} -B^*C)^{-1}B^*\Bigr)\\
& = \I\ci\cH -CB^* + C(\I\ci {\er} -B^*C)^{-1}B^* -CB^*C(\I\ci {\er} -B^*C)^{-1}B^* \\
&= \I\ci\cH  + C\Bigl( -\I\ci {\er} (\I\ci {\er} - B^*C) +\I\ci {\er} - B^*C \Bigr) (\I\ci {\er} - B^*C)^{-1}B^*  \\
&=\I\ci\cH. 
\end{align*}
Thus, when $\I\ci {\er} -B^*C$ is invertible, the operator $\I\ci\cH  + C(\I\ci {\er} -B^*C)^{-1}B^*$ is the right inverse of $\I\ci\cH - CB^*$. To prove that it is also a right inverse we even do not need to perform the multiplication: we can just take the adjoint of the above identity and then interchange $B$ and $C$. 

So, the invertibility of $\I\ci {\er} -B^*C$ implies the invertibility of $\I\ci\cH - CB^*$ and the formula for the inverse. To prove the ``if and only if'' statement we just need to change the roles of $\cH$ and ${\er}$ and express, using the just proved formula,  the inverse of $\I\ci {\er} -B^*C$ in terms of $(\I\ci\cH - CB^*)^{-1}$.
%
%
%
%
%
%
%
%
%
%
\end{proof}

\subsection{Computation of the characteristic function}
We turn to computing the characteristic function of $T=  U+ \bB(\Gamma-\bI\ci{\C^d}) \bB^*U$, $\|\Gamma\|<1$, where $U$ is the multiplication operator $M_\xi$ in $L^2(\mu;E)$.

We will use formula \eqref{CharFunction-01} with $V= {\bB}^*U$, $V_*={\bB}^*$, $\fD=\fD_*=\C^d$.

Let us first calculate for $|z|<1$:
\begin{align*}
(\bI\ci\cH - z T^*)^{-1} &= \left[ (\bI\ci\cH - zU^*) \left(\bI\ci\cH - z (\bI\ci\cH - zU^*)^{-1} U^*{\bB}(\Gamma^* -\bI\ci {\C^d}){\bB}^* \right)\right]^{-1} \\
& = \left[\bI\ci\cH - z (\bI\ci\cH - zU^*)^{-1} U^*{\bB}(\Gamma^* -\bI\ci {\C^d}){\bB}^* \right]^{-1} (\bI\ci\cH - zU^*)^{-1} \\
&=: X(z) (\bI\ci\cH - zU^*)^{-1}. 
\end{align*}
To compute the inverse  $X(z)$ we use Lemma \ref{l:pert-01} with $z (\bI\ci\cH - zU^*)^{-1} U^*{\bB}$ instead of $C$, $\Gamma^* -\bI\ci {\C^d}$ instead of $D$ and $\bB$ instead of $B$. Together with the first identity in \eqref{InvPert-01} we get 
\begin{align}
\label{X_1}
X(z)  = 
\bI\ci \cH + z(\bI\ci\cH -z U^*)^{-1} U^*{\bB} \Bigl(\bI\ci {\C^d} - z 
D {\bB}^*(\bI\ci \cH - z U^*)^{-1}U^*{\bB}\Bigr)^{-1} D 
{\bB}^*, 
\end{align}
where $D=\Gamma^*-\I\ci{\C^d}$. 

Now, let us express $z{\bB}^*(\bI\ci \cH - z U^*)^{-1}U^*{\bB}$ as a Cauchy integral of some matrix-valued measure. Recall that $U$ is a multiplication by the independent variable $\xi$ in $\cH\subset L^2(\mu;E)$. Recall that $b_1, b_2, \ldots , b_d\in \cH$ denote the ``columns'' of ${\bB}$ (i.e.~$b_k = {\bB}e_k$, where $e_1, e_2, \ldots, e_d$ is the standard basis in $\C^d$), and $B(\xi)= (b_1(\xi), b_2(\xi),\ldots, b_d(\xi))$ is the matrix with columns $b_k(\xi)$. Then
\begin{align*}
b_j^* (\bI\ci{\C^d} - zU^*)^{-1} U^* b_k = \int_\T \frac{\overline\xi}{1-z\overline\xi}\, {b_j(\xi)^*} b_k(\xi) \dd\mu(\xi), 
\end{align*}
so
\begin{align}
\label{MatrCauchy-01}
z {\bB}^*(\bI\ci \cH - z U^*)^{-1}U^*{\bB} = \int_\T \frac{z\overline\xi}{1-z\overline\xi} \,M(\xi)\dd\mu(\xi) =:  \cC_1[M\mu](z)=: F_1(z). 
\end{align}
where $M $ is the matrix-valued function $M(\xi)=B(\xi)^*B(\xi)$, or equivalently $M_{j,k}(\xi) = {b_j(\xi)^*} b_k(\xi)$, $1\le j, k \le d$.

Using \eqref{MatrCauchy-01}  and denoting $D:= \Gamma^* -\bI\ci{\C^d}$ we get from the above calculations that 
\begin{align*}
(\bI\ci\cH -  z T^*)^{-1}  =  \, & (\bI\ci\cH    - zU^*)^{-1} \\
& + z (\I\ci\cH - zU^*)^{-1} U^*{\bB} \Bigl(\I\ci{\C^d} -  D F_1(z) \Bigr)^{-1} D{\bB}^* (\bI\ci\cH - zU^*)^{-1} . 
\end{align*}

Applying formula \eqref{CharFunction-01}, with $V= {\bB}^*U$, $V_*={\bB}_*$, $\fD=\fD_*=\C^d$, we see that the characteristic function is an analytic function $\theta=\theta\ci T$, whose values are bounded linear operators acting on $\fD$,  defined by the 
formula 
\begin{align}
\label{CharFunction}
\theta_T(z) = {\bB}^*\left( -T + z D_{T^*}\left(\I\ci\cH - z T^*\right)^{-1} D_T \right) U^*{\bB} \Bigm|_{\fD}, \qquad z\in \D. 
\end{align}

We can  see from \eqref{BlDec-01} that the defect operators $D\ci T$ and $D\ci{T^*}$ are given by 
\begin{align*}
D\ci T= U^*{\bB} D\ci\Gamma {\bB}^*U, \qquad D\ci{T^*} = {\bB} D\ci{\Gamma^*} {\bB}^*.
\end{align*}

We can also see from \eqref{BlDec-01} that the term $-T$ in \eqref{CharFunction} contributes $-\Gamma$ to the matrix 
$\theta\ci T$. The rest can be obtained from the above representation formula for $(\I\ci\cH -zT^*)^{-1}$. 
Thus, recalling the definition \eqref{MatrCauchy-01} of $\cC_1M\mu$ 
we get, denoting $F_1(z):= (\cC_1M\mu)(z)$, that
\begin{align*}
\theta\ci T (z) & = -\Gamma +  D\ci{\Gamma^*}\Biggl[ F_1(z)  +  F_1(z) \Bigl( \I\ci{\fD} -(\Gamma^*-\I\ci{\fD})F_1(z)\Bigr)^{-1} (\Gamma^*-\I\ci{\fD}) F_1(z) \Biggr] D\ci\Gamma
\\
& = -\Gamma  +   D\ci{\Gamma^*} F_1(z) \Bigl( \I\ci{\fD} -(\Gamma^*-\I\ci{\fD})F_1(z)\Bigr)^{-1}  D\ci\Gamma. 
\end{align*}

In the above computation to compute $X(z)$ we can use the second formula in \eqref{InvPert-01}. We get instead of \eqref{X_1} an alternative representation
\begin{align*}
X(z)  = 
\bI\ci \cH + z(\bI\ci\cH -z U^*)^{-1} U^*{\bB} D \Bigl(\bI\ci{\fD} - z 
 {\bB}^*(\bI\ci \cH - z U^*)^{-1}U^*{\bB}D\Bigr)^{-1}   
{\bB}^*. 
\end{align*}
Repeating the same computations as above we get another formula for $\theta\ci T$, 
\begin{align*}
\theta\ci T (z) = -\Gamma  +   D\ci{\Gamma^*}  \Bigl( \I\ci{\fD} -F_1(z)(\Gamma^*-\I\ci{\fD})\Bigr)^{-1} F_1(z)  D\ci\Gamma.
\end{align*}

To summarize we have proved two representations of the 
characteristic operator-valued function.
\begin{theo}
\label{t-theta}
Let $T=T_\Gamma$ be the operator given in \eqref{BlDec-01}, with $\Gamma$ being a strict contraction. Then the characteristic function $\theta\ci{T}=\theta\ci{T_\Gamma}\in H^\infty(\fD\,\tto\fD_*)$,  with coordinate operators $V= {\bB}^*U$, $V_*={\bB}^*$ (and with  $\fD=\fD_*=\C^d$) is given by
\begin{align*}
\theta\ci{T_\Gamma} (z)  &= -\Gamma  +   D\ci{\Gamma^*} F_1(z) \Bigl( \I\ci{\fD} -(\Gamma^*-\I\ci{\fD})F_1(z)\Bigr)^{-1}  D\ci\Gamma
\\
&= -\Gamma  +   D\ci{\Gamma^*}  \Bigl( \I\ci{\fD} -F_1(z)(\Gamma^*-\I\ci{\fD})\Bigr)^{-1} F_1(z)  D\ci\Gamma,
\end{align*}
where $F_1(z)$ is the matrix-valued function given by \eqref{MatrCauchy-01}.
\end{theo}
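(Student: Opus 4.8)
The plan is to start from the general formula \eqref{CharFunction-01} for the characteristic function, specialized to the coordinate operators $V=\bB^*U$, $V_*=\bB^*$ and $\fD=\fD_*=\C^d$, which is exactly \eqref{CharFunction}. The computation has already been carried out in the text just before the statement, so the proof is essentially a matter of organizing it. First I would record the block-decomposition consequences: from \eqref{BlDec-01} we read off $D\ci T = U^*\bB D\ci\Gamma\bB^* U$ and $D\ci{T^*}=\bB D\ci{\Gamma^*}\bB^*$, and also that the term $-T$ in \eqref{CharFunction} contributes exactly $-\Gamma$ after pre/post-multiplication by $\bB^*$ and $U^*\bB$ (since $\bB^*U \cdot U^*\bB = \I\ci{\C^d}$ and the $T_1$ block is annihilated). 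This reduces everything to computing the middle term $z\,\bB^* D\ci{T^*}(\I\ci\cH - zT^*)^{-1}D\ci T\, U^*\bB$.

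Next I would compute $(\I\ci\cH - zT^*)^{-1}$ for $|z|<1$. Writing $T^* = U^* + U^*\bB(\Gamma^*-\I\ci{\C^d})\bB^*$ and factoring out $(\I\ci\cH - zU^*)$ on the left, one gets $(\I\ci\cH-zT^*)^{-1} = X(z)(\I\ci\cH - zU^*)^{-1}$ where $X(z) = \bigl[\I\ci\cH - z(\I\ci\cH-zU^*)^{-1}U^*\bB(\Gamma^*-\I\ci{\C^d})\bB^*\bigr]^{-1}$. Here is where Lemma \ref{l:pert-01} enters: apply it with $C = z(\I\ci\cH - zU^*)^{-1}U^*\bB$, $D = \Gamma^*-\I\ci{\C^d}$, $B=\bB$, using first \eqref{InvPert-01} to get \eqref{X_1}, and then the alternative form of \eqref{InvPert-01} to get the second formula for $X(z)$. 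Then I would substitute the identity $z\,\bB^*(\I\ci\cH - zU^*)^{-1}U^*\bB = F_1(z) = \cC_1[M\mu](z)$ from \eqref{MatrCauchy-01}, which follows by expanding $U=M_\xi$ and integrating $\tfrac{z\bar\xi}{1-z\bar\xi}$ against the matrix measure $M\,d\mu$ with $M(\xi)=B(\xi)^*B(\xi)$. Plugging the resulting expression for $(\I\ci\cH - zT^*)^{-1}$ into \eqref{CharFunction}, together with the formulas for $D\ci T$, $D\ci{T^*}$, and collapsing the various $\bB^*U U^*\bB = \I\ci{\C^d}$ products, the middle term becomes $D\ci{\Gamma^*}\bigl[F_1 + F_1(\I-(\Gamma^*-\I)F_1)^{-1}(\Gamma^*-\I)F_1\bigr]D\ci\Gamma$; the bracket simplifies via the resolvent identity $A + A(\I-CA)^{-1}CA = (\I - AC)^{-1}A$ (with $A=F_1$, $C=\Gamma^*-\I$) to $F_1(\I-(\Gamma^*-\I)F_1)^{-1} = (\I-F_1(\Gamma^*-\I))^{-1}F_1$, giving both stated forms.

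The only genuine obstacle I anticipate is bookkeeping: keeping straight which $\bB^*$, $U^*$, restriction-to-$\fD$, and adjoints appear where, and verifying that all the ``spurious'' factors of $\bB^*U U^*\bB$ really do cancel to $\I\ci{\C^d}$ (this uses that $\bB$ is an isometry and $U$ unitary, so $\bB^*U(U^*\bB) = \bB^*\bB = \I\ci{\C^d}$), together with checking that the defect operators in \eqref{CharFunction} are correctly placed so that the $U^*\bB$ on the far right of \eqref{CharFunction} pairs with the $\bB^*U$ hidden inside $D\ci T$. None of this is deep, but it is the step where sign errors or a misplaced adjoint would creep in. The two-line algebraic simplification of the bracket, and the fact that the second representation comes for free from the alternative Woodbury form in \eqref{InvPert-01}, should be routine. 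I would close by noting that analyticity and the $H^\infty$ membership of $\theta\ci{T_\Gamma}$ are automatic from the general theory (the characteristic function of a c.n.u.\ contraction lies in $H^\infty(\fD\tto\fD_*)$, established in the discussion following \eqref{CharFunction-01}), so nothing extra is needed there.
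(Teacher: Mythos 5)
Your proposal is correct and follows essentially the same route as the paper: specialize \eqref{CharFunction-01} to $V=\bB^*U$, $V_*=\bB^*$, factor $(\I\ci\cH - zT^*)^{-1}=X(z)(\I\ci\cH - zU^*)^{-1}$, invert $X(z)$ via Lemma \ref{l:pert-01}, substitute $F_1=\cC_1[M\mu]$ from \eqref{MatrCauchy-01}, read off the defect operators and the $-\Gamma$ contribution from \eqref{BlDec-01}, and collapse the bracket by the same resolvent identity (with the second representation coming from the alternative form in \eqref{InvPert-01}). No substantive difference from the paper's argument.
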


In these formulas, the inverse is taken of a $(d\times d)$ matrix-valued function, which is much simpler than computing the inverse in \eqref{CharFunction}.

\subsection{
Characteristic function and the Cauchy integrals of matrix-valued measures}

For a (possibly complex-valued) measure $\tau$ on $\T$ and $z\notin\T$ define the following Cauchy type transforms $\cC$, $\cC_1$ and $\cC_2$
\begin{align*}
\cC \tau (z) := \int_\T \frac{d\tau(\xi)}{1-\overline\xi z}, \qquad \cC_1 \tau ( z) := \int_\T \frac{\overline\xi  z d\tau(\xi)}{1-\overline\xi z}, \qquad \cC_2\tau( z):= \int_\T \frac{1+ \overline\xi  z }{1-\overline\xi z} d\tau(\xi).
\end{align*}

Performing the Cauchy transforms component-wise we can define them for matrix-valued measures as well. 

Thus $F_1$ from the above Theorem \ref{t-theta} is given by $F_1 =\cC_1[M\mu]$, where $M(\xi) = B^*(\xi) B(\xi)$. We would like to give the representation of $\theta\ci{T_\Gamma}$ in terms of function  $F_2:= \cC_2 [M\mu]$.


Slightly abusing notation we will write $\theta_\Gamma$ instead of 
$\theta\ci{T_\Gamma}$.

\begin{cor}
\label{c:theta_0}
For $\theta\ci\OZ :=\theta\ci{T_\OZ}$ we have
\begin{align}
\label{theta_0-01}
 \theta\ci\OZ(z) & = F_1(z) (\bI + F_1(z))^{-1} = (\bI + F_1(z))^{-1}F_1(z) \\
\label{theta_0-02}
  & = (F_2(z)-\bI) (F_2(z)+\bI)^{-1} = (F_2(z)+\bI)^{-1}  (F_2(z)-\bI) .
\end{align}
\end{cor}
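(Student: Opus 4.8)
The corollary is the specialization of Theorem \ref{t-theta} to the case $\Gamma=\OZ$, together with an algebraic rewriting that trades $F_1=\cC_1[M\mu]$ for $F_2=\cC_2[M\mu]$. So the proof splits into two independent, short pieces: first deduce \eqref{theta_0-01} from Theorem \ref{t-theta}, and then deduce \eqref{theta_0-02} from \eqref{theta_0-01} via the elementary relation between $F_1$ and $F_2$.

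\textbf{Step 1: Specialize Theorem \ref{t-theta}.} Set $\Gamma=\OZ$. Then $D\ci\Gamma = \bI\ci\fD$, $D\ci{\Gamma^*}=\bI\ci\fD$, and $\Gamma^*-\bI\ci\fD = -\bI\ci\fD$. Substituting into the first displayed formula of Theorem \ref{t-theta} gives
\[
\theta\ci\OZ(z) = \OZ + F_1(z)\bigl(\bI\ci\fD + F_1(z)\bigr)^{-1} = F_1(z)(\bI+F_1(z))^{-1},
\]
and the second displayed formula of Theorem \ref{t-theta} gives $\theta\ci\OZ(z) = (\bI + F_1(z))^{-1}F_1(z)$; this proves \eqref{theta_0-01}. (One should note for safety that $\bI+F_1(z)$ is invertible for $z\in\D$: since $T\ci\OZ$ is a strict contraction under our standing hypotheses, its characteristic function is well-defined and the inverses appearing in Theorem \ref{t-theta} are legitimate; alternatively, $\re F_1(z)$ has nonnegative real part because $F_1(z)+\bI = \tfrac12(F_2(z)+\bI)$ and $\cC_2$ of a positive matrix measure is an operator with nonnegative real part by the Herglotz/Riesz--Herglotz representation, so $\re(\bI+F_1(z)) = \tfrac12\re(\bI+F_2(z))\succeq \tfrac12\bI\succ 0$.)

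\textbf{Step 2: Rewrite in terms of $F_2$.} From the definitions,
\[
\cC_2\tau(z) = \int_\T\frac{1+\overline\xi z}{1-\overline\xi z}\,d\tau(\xi)
= \tau(\T) + 2\int_\T\frac{\overline\xi z}{1-\overline\xi z}\,d\tau(\xi),
\]
so, since $M(\xi)=B^*(\xi)B(\xi)$ and $\bB$ is an isometry so that $\int_\T M\,d\mu = \bB^*\bB = \bI\ci{\C^d}$, we obtain the key identity $F_2(z) = \bI + 2F_1(z)$, equivalently $F_1(z) = \tfrac12(F_2(z)-\bI)$ and $\bI + F_1(z) = \tfrac12(F_2(z)+\bI)$. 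Plugging this into \eqref{theta_0-01},
\[
\theta\ci\OZ(z) = F_1(z)(\bI+F_1(z))^{-1}
= \tfrac12(F_2(z)-\bI)\bigl(\tfrac12(F_2(z)+\bI)\bigr)^{-1}
= (F_2(z)-\bI)(F_2(z)+\bI)^{-1},
\]
and the same substitution in the other form of \eqref{theta_0-01} yields $(F_2(z)+\bI)^{-1}(F_2(z)-\bI)$; this is \eqref{theta_0-02}.

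\textbf{Main obstacle.} There is no serious obstacle: the content is the already-proven Theorem \ref{t-theta} plus the observation $\int_\T M\,d\mu = \bI$. The only point requiring a word of care is justifying that the relevant $(d\times d)$ matrices ($\bI+F_1(z)$ and $F_2(z)+\bI$) are invertible on $\D$, which follows either from Theorem \ref{t-theta} being applicable (as $T\ci\OZ$ is a strict contraction by Lemma \ref{l-cnu} together with the block structure \eqref{BlDec-01}, since $\OZ$ is a strict contraction) or directly from the Herglotz positivity of $F_2$; both routes are one line each.
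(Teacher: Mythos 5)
Your proof is correct and follows essentially the same route as the paper: specialize Theorem \ref{t-theta} to $\Gamma=\OZ$ to get \eqref{theta_0-01}, then use $F_2(z)=\bI+2F_1(z)$ (coming from $\int_\T M\,\dd\mu=\bB^*\bB=\bI$) to get \eqref{theta_0-02}. The extra remark on the invertibility of $\bI+F_1(z)$ is harmless added care that the paper omits.
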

\begin{proof}
The identity \eqref{theta_0-01} is a direct application of Theorem \ref{t-theta}. The identity \eqref{theta_0-02} follows immediately from the trivial relation
\[
F_2(z) = \int_\T M\dd \mu + 2F_1(z) = \bI\ci\fD + 2 F_1(z);
\]
the equality $\int_\T M\dd \mu =\bI\ci\fD=\bI\ci{\C^d}$ is just a re-statement of the fact that the functions $b_1, b_2, \ldots, b_d$ form an orthonormal basis in $\cH$. 
\end{proof}

\section{Relations between characteristic functions \texorpdfstring{$\theta\ci\Gamma$}{theta<sub>Gamma} }
\label{s:PropCharFunct}

\subsection{Characteristic functions and linear fractional transformations}\label{ss:LFT}

When $d=1$, it is known that the characteristic functions are related by a linear fractional transformation
\[
 \te_\gamma(z) = \frac{\te_0(z)-\gamma}{1-\overline\gamma \te_0(z)}\,,
 \]
see \cite[Equation (2.9)]{LT15}. 

It turns out that a similar formula holds for finite rank perturbations. 

\begin{theo}
\label{t-LFT}
Let $T$ be the operator given in \eqref{BlDec-01}, with $\Gamma$ being  a strict contraction. Then the characteristic functions $\theta\ci{\Gamma}:=\theta\ci{T_\Gamma}$ and $\te\ci\OZ = \theta\ci{T_\OZ} $ are related via linear fractional transformation
\[
\theta\ci{\Gamma}
=
D\ci{\Gamma^*}^{-1}(  \theta\ci\OZ-\Gamma) (\OID\ci{\fD} - \Gamma^*  \theta\ci\OZ)^{-1} D\ci{\Gamma}
=
D\ci{\Gamma^*}(\OID\ci{\fD} -  \theta\ci\OZ\Gamma^* )^{-1} (  \theta\ci\OZ-\Gamma) D\ci{\Gamma}^{-1}
.
\]
\end{theo}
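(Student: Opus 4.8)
The strategy is to start from the two representations of $\theta\ci\Gamma=\theta\ci{T_\Gamma}$ given in Theorem \ref{t-theta} in terms of the single function $F_1 = \cC_1[M\mu]$, and to eliminate $F_1$ in favor of $\theta\ci\OZ$ using Corollary \ref{c:theta_0}. Indeed, \eqref{theta_0-01} shows that $\theta\ci\OZ = F_1(\bI+F_1)^{-1}$, which can be inverted algebraically to express $F_1$ in terms of $\theta\ci\OZ$. Substituting this back into the formula for $\theta\ci\Gamma$ and simplifying the resulting rational expression in $\theta\ci\OZ$ and $\Gamma$ should produce exactly the claimed linear fractional transformation. This is the matrix analogue of the scalar manipulation behind \cite[Equation (2.9)]{LT15}, but care is needed because the factors do not commute, so one must keep track of left versus right inverses throughout.

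\textbf{Key steps.} First I would solve \eqref{theta_0-01} for $F_1$: from $\theta\ci\OZ(\bI+F_1) = F_1$ one gets $\theta\ci\OZ = (\bI-\theta\ci\OZ)F_1$, hence $F_1 = (\bI-\theta\ci\OZ)^{-1}\theta\ci\OZ$, and symmetrically $F_1 = \theta\ci\OZ(\bI-\theta\ci\OZ)^{-1}$ from the other equality in \eqref{theta_0-01}. (The invertibility of $\bI-\theta\ci\OZ(z)$ for $z\in\D$ is clear from $\bI+F_1$ being invertible, since $\|F_1\|$ grows no faster than allowed; alternatively $\theta\ci\OZ$ is a strict contraction in $\D$.) Next I substitute $F_1 = (\bI-\theta\ci\OZ)^{-1}\theta\ci\OZ$ into the first formula of Theorem \ref{t-theta},
\[
\theta\ci\Gamma = -\Gamma + D\ci{\Gamma^*}F_1\bigl(\bI - (\Gamma^*-\bI)F_1\bigr)^{-1}D\ci\Gamma,
\]
and simplify the inner factor: $\bI-(\Gamma^*-\bI)F_1 = \bI + (\bI-\Gamma^*)F_1$. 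Using $F_1 = (\bI-\theta\ci\OZ)^{-1}\theta\ci\OZ$, multiply through by $(\bI-\theta\ci\OZ)$ on an appropriate side to clear denominators; one finds $\bigl(\bI+(\bI-\Gamma^*)F_1\bigr) = (\bI-\theta\ci\OZ)^{-1}\bigl((\bI-\theta\ci\OZ) + (\bI-\Gamma^*)\theta\ci\OZ\bigr) = (\bI-\theta\ci\OZ)^{-1}(\bI - \Gamma^*\theta\ci\OZ)$. Likewise $F_1\bigl(\bI+(\bI-\Gamma^*)F_1\bigr)^{-1} = (\bI-\theta\ci\OZ)^{-1}\theta\ci\OZ(\bI-\Gamma^*\theta\ci\OZ)^{-1}(\bI-\theta\ci\OZ)$, so
\[
\theta\ci\Gamma = -\Gamma + D\ci{\Gamma^*}(\bI-\theta\ci\OZ)^{-1}\theta\ci\OZ(\bI-\Gamma^*\theta\ci\OZ)^{-1}(\bI-\theta\ci\OZ)D\ci\Gamma.
\]
Finally I would collect the $-\Gamma$ term into the fraction. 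Here one uses the defect commutation identities $D\ci{\Gamma^*}\Gamma = \Gamma D\ci\Gamma$ and $(\bI-\theta\ci\OZ)^{-1}$-algebra together with $D\ci{\Gamma^*}^2 = \bI-\Gamma\Gamma^*$, to rewrite $-\Gamma = D\ci{\Gamma^*}^{-1}(-\Gamma)D\ci{\Gamma^*}D\ci\Gamma^{-1}\cdots$ — more precisely, pull $D\ci{\Gamma^*}$ out on the left and $D\ci\Gamma$ out on the right and verify the bracketed middle term equals $D\ci{\Gamma^*}^{-1}(\theta\ci\OZ-\Gamma)(\bI-\Gamma^*\theta\ci\OZ)^{-1}D\ci\Gamma$; expanding and using $\Gamma^*(\bI-\theta\ci\OZ) + (\bI-\Gamma^*\theta\ci\OZ)^{-1}\cdots$ collapses the algebra. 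The second formula in the theorem is obtained identically, but starting from the \emph{second} representation of $\theta\ci\Gamma$ in Theorem \ref{t-theta} and the factorization $F_1 = \theta\ci\OZ(\bI-\theta\ci\OZ)^{-1}$; alternatively it follows from the first by taking adjoints and replacing $z$ by $\bar z$ (equivalently, by the symmetry $\theta\ci{T^*}$ versus $\theta\ci T$).

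\textbf{Main obstacle.} The genuinely delicate point is the noncommutative bookkeeping in the last step: in the scalar case one simply puts everything over a common denominator, but here one must repeatedly insert and cancel factors of $D\ci\Gamma$, $D\ci{\Gamma^*}$ and $(\bI-\theta\ci\OZ)$ on the correct side, using the intertwining relations $D\ci{\Gamma^*}\Gamma = \Gamma D\ci\Gamma$, $\Gamma^* D\ci{\Gamma^*} = D\ci\Gamma\Gamma^*$, and the identity $\bI - \Gamma\Gamma^* = D\ci{\Gamma^*}^2$. Verifying that the stray $-\Gamma$ combines with the fraction to yield precisely $D\ci{\Gamma^*}^{-1}(\theta\ci\OZ-\Gamma)(\bI-\Gamma^*\theta\ci\OZ)^{-1}D\ci\Gamma$ amounts to checking the algebraic identity
\[
-\Gamma(\bI-\Gamma^*\theta\ci\OZ) + D\ci{\Gamma^*}^2\,\theta\ci\OZ = \theta\ci\OZ - \Gamma
\]
after suitable conjugation, i.e. $\Gamma\Gamma^*\theta\ci\OZ + D\ci{\Gamma^*}^2\theta\ci\OZ = \theta\ci\OZ$, which is exactly $(\Gamma\Gamma^* + \bI - \Gamma\Gamma^*)\theta\ci\OZ = \theta\ci\OZ$ — trivially true. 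So once the right factors are pulled to the outside, the interior collapses cleanly; the only real work is being disciplined about left/right placement of the noncommuting inverses, and noting that all the indicated inverses exist for $z\in\D$ because $\theta\ci\OZ(z)$ and $\Gamma$ are strict contractions.
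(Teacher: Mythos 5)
Your strategy is exactly the paper's: solve \eqref{theta_0-01} for $F_1$, substitute into the formula of Theorem \ref{t-theta}, simplify the rational expression in $\theta\ci\OZ$, and then absorb the stray $-\Gamma$ using the intertwining $D\ci{\Gamma^*}^{-1}\Gamma = \Gamma D\ci{\Gamma}^{-1}$ together with $D\ci{\Gamma^*}^2 = \bI-\Gamma\Gamma^*$ (the paper merely packages the first part as Proposition \ref{p-LFT} and the commutation relations as Lemma \ref{l:G-D_G}). There is, however, a concrete error in one of your displayed intermediates, of precisely the left/right type you warn about: you pull $(\bI-\theta\ci\OZ)^{-1}$ out on the \emph{left}, writing $\bI+(\bI-\Gamma^*)F_1 = (\bI-\theta\ci\OZ)^{-1}(\bI-\Gamma^*\theta\ci\OZ)$. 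Since $(\bI-\Gamma^*)$ does not commute with $(\bI-\theta\ci\OZ)^{-1}$, this equality fails; the correct factorization uses $F_1=\theta\ci\OZ(\bI-\theta\ci\OZ)^{-1}$ and puts the factor on the \emph{right}: $\bI+(\bI-\Gamma^*)F_1 = (\bI-\Gamma^*\theta\ci\OZ)(\bI-\theta\ci\OZ)^{-1}$. With that, $F_1\bigl(\bI+(\bI-\Gamma^*)F_1\bigr)^{-1} = \theta\ci\OZ(\bI-\Gamma^*\theta\ci\OZ)^{-1}$, because the outer factors $(\bI-\theta\ci\OZ)^{\pm1}$ cancel against $F_1$ (as $\theta\ci\OZ$ commutes with functions of itself), and the intermediate formula is $\theta\ci\Gamma = -\Gamma + D\ci{\Gamma^*}\,\theta\ci\OZ(\bI-\Gamma^*\theta\ci\OZ)^{-1}D\ci\Gamma$ --- \emph{without} the spurious conjugation by $(\bI-\theta\ci\OZ)$ that appears in your display. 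Your final collapse identity $-\Gamma(\bI-\Gamma^*\theta\ci\OZ)+D\ci{\Gamma^*}^2\theta\ci\OZ = \theta\ci\OZ-\Gamma$ is correct and is exactly the paper's last step, but note that it finishes the proof only when applied to the corrected intermediate, not to the formula you actually wrote down.
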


\begin{rem*}
At first sight, this formula looks like a formula in \cite[p.~234]{Nik-Vas_model_MSRI_1998}. However, their result expresses the characteristic function in terms of a linear fractional transformation in $T$; whereas, here we have a linear fractional transformation in $\Gamma$.
\end{rem*}

\begin{theo}
\label{t:LFT-02}
Under assumptions of the above Theorem \ref{t-LFT}
\[
\theta_\OZ = D\ci{\Gamma^*} (\bI + \theta\ci\Gamma  \Gamma^*)^{-1} (\theta\ci\Gamma + \Gamma) D\ci\Gamma^{-1} 
=  D\ci{\Gamma^*}^{-1}  (\theta\ci\Gamma + \Gamma) (\bI + \Gamma^* \theta\ci\Gamma)^{-1} D\ci\Gamma.
\]
\end{theo}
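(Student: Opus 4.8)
The plan is to obtain Theorem \ref{t:LFT-02} as a direct algebraic inversion of the linear fractional transformation in Theorem \ref{t-LFT}, without reworking the characteristic-function computation. From Theorem \ref{t-LFT} we have
\[
\theta\ci{\Gamma}
=
D\ci{\Gamma^*}^{-1}(  \theta\ci\OZ-\Gamma) (\OID\ci{\fD} - \Gamma^*  \theta\ci\OZ)^{-1} D\ci{\Gamma}.
\]
First I would left-multiply by $D\ci{\Gamma^*}$ and right-multiply by $D\ci{\Gamma}^{-1}$ to isolate
\[
D\ci{\Gamma^*}\,\theta\ci\Gamma\, D\ci{\Gamma}^{-1}
=
(\theta\ci\OZ-\Gamma)(\OID\ci{\fD} - \Gamma^*\theta\ci\OZ)^{-1},
\]
and then clear the remaining inverse: with $X := \theta\ci\OZ$ and $Y := D\ci{\Gamma^*}\theta\ci\Gamma D\ci{\Gamma}^{-1}$ this reads $Y(\OID - \Gamma^* X) = X - \Gamma$, i.e. $Y - Y\Gamma^* X = X - \Gamma$, hence $Y + \Gamma = X + Y\Gamma^* X = (\OID + Y\Gamma^*)X$. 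Solving for $X$ gives $X = (\OID + Y\Gamma^*)^{-1}(Y+\Gamma)$. Substituting back $Y = D\ci{\Gamma^*}\theta\ci\Gamma D\ci{\Gamma}^{-1}$ yields
\[
\theta\ci\OZ
=
\bigl(\OID + D\ci{\Gamma^*}\theta\ci\Gamma D\ci{\Gamma}^{-1}\Gamma^*\bigr)^{-1}\bigl(D\ci{\Gamma^*}\theta\ci\Gamma D\ci{\Gamma}^{-1} + \Gamma\bigr),
\]
and the first displayed formula in Theorem \ref{t:LFT-02} follows after pulling $D\ci{\Gamma^*}$ out of both factors on the right, using the commutation identity $D\ci{\Gamma}^{-1}\Gamma^* = \Gamma^* D\ci{\Gamma^*}^{-1}$ (equivalently $\Gamma D\ci{\Gamma} = D\ci{\Gamma^*}\Gamma$, the standard intertwining of a contraction with its defect operators) to rewrite $D\ci{\Gamma^*}\theta\ci\Gamma D\ci{\Gamma}^{-1}\Gamma^* = D\ci{\Gamma^*}\theta\ci\Gamma\Gamma^* D\ci{\Gamma^*}^{-1}$, so that
\[
\OID + D\ci{\Gamma^*}\theta\ci\Gamma D\ci{\Gamma}^{-1}\Gamma^*
= D\ci{\Gamma^*}(\bI + \theta\ci\Gamma\Gamma^*)D\ci{\Gamma^*}^{-1},
\]
and likewise $D\ci{\Gamma^*}\theta\ci\Gamma D\ci{\Gamma}^{-1} + \Gamma = D\ci{\Gamma^*}(\theta\ci\Gamma + \Gamma)D\ci{\Gamma}^{-1}$.

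The second formula in Theorem \ref{t:LFT-02} is obtained the same way starting from the \emph{second} representation of $\theta\ci\Gamma$ in Theorem \ref{t-LFT}, namely $\theta\ci\Gamma = D\ci{\Gamma^*}(\OID - \theta\ci\OZ\Gamma^*)^{-1}(\theta\ci\OZ-\Gamma)D\ci{\Gamma}^{-1}$: rearranging to $D\ci{\Gamma^*}^{-1}\theta\ci\Gamma D\ci{\Gamma} = (\OID - \theta\ci\OZ\Gamma^*)^{-1}(\theta\ci\OZ - \Gamma)$, setting $Z := D\ci{\Gamma^*}^{-1}\theta\ci\Gamma D\ci{\Gamma}$ and clearing gives $(\OID - X\Gamma^*)Z = X - \Gamma$, i.e. $Z + \Gamma = X + X\Gamma^* Z = X(\OID + \Gamma^* Z)$, so $X = (Z+\Gamma)(\OID+\Gamma^* Z)^{-1}$; the same defect intertwining identities then move the $D\ci{\Gamma^*}^{\pm1}$, $D\ci{\Gamma}^{\pm1}$ factors to the outside and produce $\theta\ci\OZ = D\ci{\Gamma^*}^{-1}(\theta\ci\Gamma + \Gamma)(\bI + \Gamma^*\theta\ci\Gamma)^{-1}D\ci{\Gamma}$. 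Alternatively, one could simply verify a posteriori that the two claimed expressions for $\theta\ci\OZ$ are the composition inverse of the map in Theorem \ref{t-LFT}, but running the algebra forward is cleaner and automatically shows consistency of the two forms.

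There is no serious obstacle here; the content is entirely in Theorem \ref{t-LFT}, which is assumed. The only points requiring a little care are: (i) justifying that all the inverses appearing — $\OID + \theta\ci\Gamma\Gamma^*$, $\OID + \Gamma^*\theta\ci\Gamma$, $\OID - \theta\ci\OZ\Gamma^*$, $\OID - \Gamma^*\theta\ci\OZ$ — are genuinely invertible as $(d\times d)$ matrix-valued functions on $\D$; this follows because $\|\Gamma\|<1$ while $\|\theta\ci\Gamma(z)\|\le 1$ and $\|\theta\ci\OZ(z)\|\le 1$ on $\D$, so each is a strict contraction perturbation of the identity, and (ii) being consistent about left versus right placement of the defect operators, which is exactly the role of the identities $\Gamma D\ci\Gamma = D\ci{\Gamma^*}\Gamma$ and their adjoints/inverses. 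I expect the ``main obstacle,'' such as it is, to be bookkeeping: making sure the two stated forms of each formula are matched to the correct one of the two forms of Theorem \ref{t-LFT} and that the defect-operator shuffling is done in the right order.
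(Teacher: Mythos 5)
Your proposal is correct and follows essentially the same route as the paper: both arguments simply clear the inverse in Theorem \ref{t-LFT}, solve the resulting relation for $\theta\ci\OZ$, and use the defect-operator intertwining identities of Lemma \ref{l:G-D_G} to move the factors $D\ci\Gamma^{\pm1}$, $D\ci{\Gamma^*}^{\pm1}$ into the stated positions. The only difference is cosmetic (you abstract the LFT with $X$, $Y$, $Z$ before reinstating the defect operators, while the paper commutes first and solves second), and your added remark on the invertibility of $\bI+\theta\ci\Gamma\Gamma^*$ and $\bI+\Gamma^*\theta\ci\Gamma$ is a legitimate point the paper leaves implicit.
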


To prove Theorem \ref{t-LFT} we start with the following simpler statement. 
\begin{prop}\label{p-LFT}
The matrix-valued characteristic functions $\theta\ci{\Gamma}$ and $\te\ci\OZ$ are related via
\[
\theta\ci{\Gamma}
=
-\Gamma + D\ci{\Gamma^*}  \theta\ci\OZ \left(\OID\ci{\fD} - \Gamma^*  \theta\ci\OZ\right)^{-1} D\ci{\Gamma}
=
-\Gamma + D\ci{\Gamma^*}  \left(\OID\ci{\fD} -   \theta\ci\OZ\Gamma^*\right)^{-1}  \theta\ci\OZ D\ci{\Gamma}
.
\]
\end{prop}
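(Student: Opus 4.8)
The plan is to deduce Proposition \ref{p-LFT} from Theorem \ref{t-theta} by eliminating the Cauchy integral $F_1=\cC_1[M\mu]$ in favor of $\te\ci\OZ$. Everything below is an identity of $(d\times d)$ matrix-valued analytic functions on $\D$, carried out pointwise in $z$, so I suppress the variable $z$ throughout.

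First I would extract from Corollary \ref{c:theta_0} the consequences that make $F_1$ expressible through $\te\ci\OZ$. From $\te\ci\OZ = F_1(\bI+F_1)^{-1}$ (which in particular records that $\bI+F_1$ is invertible) one gets
\[
\bI - \te\ci\OZ = \bI - F_1(\bI+F_1)^{-1} = \big[(\bI+F_1)-F_1\big](\bI+F_1)^{-1}=(\bI+F_1)^{-1},
\]
so $\bI-\te\ci\OZ$ is invertible with $(\bI-\te\ci\OZ)^{-1}=\bI+F_1$, and hence
\[
F_1 = (\bI-\te\ci\OZ)^{-1}-\bI = (\bI-\te\ci\OZ)^{-1}\te\ci\OZ = \te\ci\OZ(\bI-\te\ci\OZ)^{-1},
\]
the last equality because $\te\ci\OZ$ commutes with $(\bI-\te\ci\OZ)^{-1}$. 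I will use repeatedly that $\te\ci\OZ$ commutes with both $\bI-\te\ci\OZ$ and $(\bI-\te\ci\OZ)^{-1}$.

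Next I would substitute this into the first formula of Theorem \ref{t-theta}, $\te\ci\Gamma = -\Gamma + D\ci{\Gamma^*}F_1\big(\bI-(\Gamma^*-\bI)F_1\big)^{-1}D\ci\Gamma$. Writing $\bI-(\Gamma^*-\bI)F_1=(\bI+F_1)-\Gamma^*F_1$ and inserting $\bI+F_1=(\bI-\te\ci\OZ)^{-1}$ together with $\Gamma^*F_1=\Gamma^*\te\ci\OZ(\bI-\te\ci\OZ)^{-1}$ gives the factorization
\[
\bI-(\Gamma^*-\bI)F_1 = (\bI-\Gamma^*\te\ci\OZ)(\bI-\te\ci\OZ)^{-1};
\]
since the left side is invertible (it is the matrix inverted in Theorem \ref{t-theta}), so is $\bI-\Gamma^*\te\ci\OZ$. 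Inverting and left-multiplying by $F_1=(\bI-\te\ci\OZ)^{-1}\te\ci\OZ$, the factors $(\bI-\te\ci\OZ)^{\pm1}$ cancel after commuting $\te\ci\OZ$ past $\bI-\te\ci\OZ$, leaving $F_1\big(\bI-(\Gamma^*-\bI)F_1\big)^{-1}=\te\ci\OZ(\bI-\Gamma^*\te\ci\OZ)^{-1}$, which is the first asserted identity. The second asserted identity comes out identically from the second formula of Theorem \ref{t-theta}: there one factors $\bI-F_1(\Gamma^*-\bI)=(\bI+F_1)-F_1\Gamma^*=(\bI-\te\ci\OZ)^{-1}(\bI-\te\ci\OZ\Gamma^*)$, inverts, and cancels in the same way to obtain $\big(\bI-F_1(\Gamma^*-\bI)\big)^{-1}F_1=(\bI-\te\ci\OZ\Gamma^*)^{-1}\te\ci\OZ$.

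The argument is short and essentially formal; the main (and only) obstacle is bookkeeping — invoking the commutativity of $\te\ci\OZ$ with $(\bI-\te\ci\OZ)^{\pm1}$ at the right moments and keeping the invertible factors on the side where they cancel. There is no analytic difficulty: the invertibility of $\bI-\te\ci\OZ$, and then of $\bI-\Gamma^*\te\ci\OZ$ and $\bI-\te\ci\OZ\Gamma^*$, is automatic from the relation $\bI-\te\ci\OZ=(\bI+F_1)^{-1}$ and from the factorizations above.
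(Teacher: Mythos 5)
Your proof is correct and follows essentially the same route as the paper: both solve the relation $\te\ci\OZ = F_1(\bI+F_1)^{-1}$ from Corollary \ref{c:theta_0} for $F_1=\te\ci\OZ(\bI-\te\ci\OZ)^{-1}$, substitute into Theorem \ref{t-theta}, and use the same factorization $\bI-(\Gamma^*-\bI)F_1=(\bI-\Gamma^*\te\ci\OZ)(\bI-\te\ci\OZ)^{-1}$ to cancel the $(\bI-\te\ci\OZ)^{\pm1}$ factors. The only cosmetic difference is that you place $(\bI-\te\ci\OZ)^{-1}$ on the left of $\te\ci\OZ$ and invoke commutativity to cancel, while the paper writes $F_1=\te\ci\OZ(\bI-\te\ci\OZ)^{-1}$ so the cancellation is immediate.
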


\begin{proof}

Solving \eqref{theta_0-01}   for $F_1$ we get that
\[
F_1(z) = \theta\ci\OZ(z)[\I-\theta\ci\OZ(z)]^{-1}.
\]
Substituting this expression into the formula for the characteristic function from Theorem \ref{t-theta}, we see that
\begin{align}
\label{theta_Gamma-03}
\theta\ci\Gamma   = -\Gamma  +   D\ci{\Gamma^*}  \theta\ci\OZ[\I\ci{\fD}- \theta\ci\OZ]^{-1} \Bigl\{ \I\ci{\fD} -(\Gamma^*-\I\ci{\fD}) \theta\ci\OZ[\I\ci{\fD}- \theta\ci\OZ]^{-1}\Bigr\}^{-1}  D\ci\Gamma.
\end{align}

We manipulate the term inside the curly brackets
\begin{align*}
\I\ci{\fD} -(\Gamma^*-\I\ci{\fD}) \theta\ci\OZ
[\I\ci{\fD}- \theta\ci\OZ]^{-1}
&=
\left(
\I\ci{\fD} -  \theta\ci\OZ -(\Gamma^*-\I\ci{\fD}) \theta\ci\OZ
\right)
[\I\ci{\fD}- \theta\ci\OZ]^{-1}
\\
&=
\left(
\I\ci{\fD} -\Gamma^*  \theta\ci\OZ
\right)
[\I\ci{\fD}- \theta\ci\OZ]^{-1},
\end{align*}
so that 
\[
 \Bigl\{ \I\ci{\fD} -(\Gamma^*-\I\ci{\fD}) \theta\ci\OZ[\I\ci{\fD}- \theta\ci\OZ]^{-1}\Bigr\}^{-1} 
 =
 [\I\ci{\fD}- \theta\ci\OZ]
 \left(
\I\ci{\fD} -\Gamma^*  \theta\ci\OZ
\right)
^{-1}.
\]
Substituting this back into \eqref{theta_Gamma-03}, we get the first equation the first equation in the proposition.

The second equation is obtained similarly.
\end{proof}

\begin{lem}
\label{l:G-D_G}
For $\|\Gamma\|<1$ we have for all $\alpha\in\R$
\begin{align}
\label{G-D_G-01}
D\ci{\Gamma^*}^{\alpha}\Gamma & = \Gamma  D\ci{\Gamma}^{\alpha} \,,\\
\label{G-D_G-02}
D\ci{\Gamma}^{\alpha}\Gamma^* & = \Gamma^*  D\ci{\Gamma^*}^{\alpha}\,, 
\end{align}
where, recall $D\ci \Gamma := (\bI - \Gamma^*\Gamma)^{1/2}$, $D\ci{\Gamma^*} := (\bI - \Gamma\Gamma^*)^{1/2}$ are the defect operators. 
\end{lem}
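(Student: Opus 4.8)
\textbf{Proof proposal for Lemma \ref{l:G-D_G}.}

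The plan is to reduce everything to a single elementary intertwining relation and then upgrade it by functional calculus. First I would observe the ``base case''
\[
\Gamma D\ci{\Gamma}^2 = \Gamma(\bI - \Gamma^*\Gamma) = \Gamma - \Gamma\Gamma^*\Gamma = (\bI - \Gamma\Gamma^*)\Gamma = D\ci{\Gamma^*}^2 \Gamma,
\]
which is just a rearrangement of parentheses. Iterating this gives $\Gamma D\ci{\Gamma}^{2n} = D\ci{\Gamma^*}^{2n}\Gamma$ for every $n\ge 0$, and hence by linearity $\Gamma\, p(D\ci{\Gamma}^2) = p(D\ci{\Gamma^*}^2)\,\Gamma$ for every polynomial $p$.

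Next I would pass from polynomials to the power function. Since $\|\Gamma\|<1$, we have $\bI - \Gamma^*\Gamma \ge (1-\|\Gamma\|^2)\bI > 0$ and likewise $\bI - \Gamma\Gamma^* \ge (1-\|\Gamma\|^2)\bI > 0$, so the spectra of the positive operators $D\ci{\Gamma}^2$ and $D\ci{\Gamma^*}^2$ are contained in the compact interval $[\,1-\|\Gamma\|^2,\,1\,]$, which is bounded away from $0$. On this interval the function $t\mapsto t^{\alpha/2}$ is continuous, so by the Weierstrass approximation theorem there are polynomials $p_k$ with $p_k(t)\to t^{\alpha/2}$ uniformly on $[\,1-\|\Gamma\|^2,\,1\,]$. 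By the continuous functional calculus (spectral theorem) this forces $p_k(D\ci{\Gamma}^2)\to (D\ci{\Gamma}^2)^{\alpha/2} = D\ci{\Gamma}^\alpha$ and $p_k(D\ci{\Gamma^*}^2)\to D\ci{\Gamma^*}^\alpha$ in operator norm. Applying the polynomial identity $\Gamma\, p_k(D\ci{\Gamma}^2) = p_k(D\ci{\Gamma^*}^2)\,\Gamma$ and letting $k\to\infty$ yields
\[
\Gamma D\ci{\Gamma}^\alpha = D\ci{\Gamma^*}^\alpha \Gamma,
\]
which is \eqref{G-D_G-01}.

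Finally, \eqref{G-D_G-02} follows by taking adjoints of \eqref{G-D_G-01}: since $D\ci{\Gamma}$ and $D\ci{\Gamma^*}$ are positive (in particular self-adjoint) and $\alpha$ is real, $D\ci{\Gamma}^\alpha$ and $D\ci{\Gamma^*}^\alpha$ are self-adjoint, so $(\Gamma D\ci{\Gamma}^\alpha)^* = D\ci{\Gamma}^\alpha \Gamma^*$ and $(D\ci{\Gamma^*}^\alpha\Gamma)^* = \Gamma^* D\ci{\Gamma^*}^\alpha$; equating gives $D\ci{\Gamma}^\alpha\Gamma^* = \Gamma^* D\ci{\Gamma^*}^\alpha$. (Alternatively, apply \eqref{G-D_G-01} with $\Gamma$ replaced by $\Gamma^*$, noting $D\ci{(\Gamma^*)} = D\ci{\Gamma^*}$ and $D\ci{(\Gamma^*)^*} = D\ci{\Gamma}$.) I do not anticipate a real obstacle here; the only point requiring care is that for non-integer or negative $\alpha$ one must work with polynomials in $D\ci{\Gamma}^2 = \bI - \Gamma^*\Gamma$ rather than in $D\ci{\Gamma}$ itself, and that the hypothesis $\|\Gamma\|<1$ is exactly what keeps $0$ out of the relevant spectra so that $t^{\alpha/2}$ is continuous there.
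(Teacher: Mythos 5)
Your proposal is correct and follows essentially the same route as the paper: the algebraic identity $\Gamma(\bI-\Gamma^*\Gamma)=(\bI-\Gamma\Gamma^*)\Gamma$, induction to even powers, uniform polynomial approximation of the power function on a spectral interval bounded away from $0$ (which is where $\|\Gamma\|<1$ is used), and then passing to \eqref{G-D_G-02} by applying \eqref{G-D_G-01} to $\Gamma^*$ (or, equivalently, taking adjoints). No gaps.
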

\begin{proof}
Let us prove \eqref{G-D_G-01}. It is trivially true for $\alpha=2$, and by induction we get that it is true for $\alpha= 2n$, $n\in\N$. Since $\|\Gamma\|<1$, the spectrum of $D\ci\Gamma$ lies in the interval $[ a, 1]$, $a=(1-\|\Gamma\|^2)^{1/2}>0$. 

Approximating $\f(x)= x^\alpha$ uniformly on $[a,1]$ by polynomials of $x^2$ we get \eqref{G-D_G-01}. 

Applying \eqref{G-D_G-01} to $\Gamma^*$ we get \eqref{G-D_G-02}. 
\end{proof}

%
%
%
%

\begin{proof}[Proof of Theorem \ref{t-LFT}]
From \eqref{G-D_G-01} we get that $D\ci{\Gamma^*}^{-1}\Gamma D\ci{\Gamma}^{-1} = D\ci{\Gamma^*}^{-2}\Gamma$, so 
\begin{align*}
\theta\ci{\Gamma}
&=
-\Gamma + D\ci{\Gamma^*}  \theta\ci\OZ \left(\OID\ci{\fD} - \Gamma^*  \theta\ci\OZ\right)^{-1} D\ci{\Gamma}
\\&
=
D\ci{\Gamma^*}\left[-D\ci{\Gamma^*}^{-2}\Gamma +  \theta\ci\OZ \left(\OID\ci{\fD} - \Gamma^*  \theta\ci\OZ\right)^{-1} \right]
D\ci{\Gamma}
\\&
=
D\ci{\Gamma^*}^{-1}\left[-\Gamma + D\ci{\Gamma^*}^{2} \theta\ci\OZ \left(\OID\ci{\fD} - \Gamma^*  \theta\ci\OZ\right)^{-1} \right]
D\ci{\Gamma}
\\&
=
D\ci{\Gamma^*}^{-1}\left[-\Gamma (\OID\ci{\fD} - \Gamma^*  \theta\ci\OZ)+(\bI -\Gamma\Gamma^*)  \theta\ci\OZ  \right]
\left(\OID\ci{\fD} - \Gamma^*  \theta\ci\OZ\right)^{-1}
D\ci{\Gamma}
\\&
=
D\ci{\Gamma^*}^{-1}\left[-\Gamma  +  \theta\ci\OZ  \right]
\left(\OID\ci{\fD} - \Gamma^*  \theta\ci\OZ\right)^{-1}
D\ci{\Gamma},
\end{align*}
which is exactly the first identity. 

The second identity is obtained similarly, using the formula $D\ci{\Gamma^*}^{-1}\Gamma D\ci{\Gamma}^{-1} = \Gamma D\ci{\Gamma}^{-2}$ and taking the factor $\left(\OID\ci{\fD} - \Gamma^*  \theta\ci\OZ\right)^{-1}$ out of brackets on the left. 
\end{proof}

\begin{proof}[Proof of Theorem \ref{t:LFT-02}]
Right multiplying the first identity in Theorem \ref{t-LFT} by $D\ci\Gamma^{-1} (\bI-\Gamma^* \theta\ci\OZ)$ we get 
\[
\theta\ci\Gamma D\ci\Gamma^{-1} - \theta\ci\Gamma D\ci\Gamma^{-1} \Gamma^* \theta\ci\OZ = D\ci{\Gamma^*}^{-1} \theta\ci\OZ - D\ci{\Gamma^*}^{-1} \Gamma. 
\]
Using identities $D\ci{\Gamma^*}^{-1}\Gamma =\Gamma D\ci\Gamma^{-1}$ and $D\ci\Gamma^{-1} \Gamma^* = \Gamma^* D\ci{\Gamma^*}^{-1}$, see Lemma \ref{l:G-D_G}, we rewrite the above equality as 
\[
\theta\ci\Gamma D\ci\Gamma^{-1} + \Gamma D\ci\Gamma^{-1} = 
\theta\ci\Gamma  \Gamma^* D\ci{\Gamma^*}^{-1}  \theta\ci\OZ + D\ci{\Gamma^*}^{-1} \theta\ci\OZ.
\]
Right multiplying both sides by $D\ci{\Gamma^*}(\theta\ci\Gamma \Gamma^* + \bI)^{-1} $ we get the first equality in the theorem. 

The second one is proved similarly. 
\end{proof}


\subsection{The defect functions \texorpdfstring{$\Delta\ci\Gamma$}{Delta<sub>Gamma} and relations between them}\label{s-Delta}
Recall that every strict contraction $\Gamma$ yields a characteristic matrix-valued function $\te\ci\Gamma$ through association with the c.n.u.~contraction $U\ci\Gamma$. The definition of the Sz.-Nagy--Foia\c s model space (see e.g.~formula \eqref{N-F-K_theta}) reveals immediately that the defect functions $\Delta\ci\Gamma= (\OID - \te^*\ci\Gamma \te\ci\Gamma)^{1/2}$ are central objects in model theory. We express defect function $\Delta\ci\Gamma$ in terms of $\Delta\ci\OZ$ (and $\Gamma$ and $\te\ci\OZ$).

\begin{theo}\label{t-Delta}
The defect functions of $\te\ci\Gamma$ and $\te\ci\OZ$ are related by
\[
\Delta\ci\Gamma^2
=
D\ci\Gamma (I-\te\ci\OZ^*\Gamma)^{-1} \Delta\ci\OZ^2(I-\Gamma^* \te\ci\OZ)^{-1}
D\ci\Gamma .
\]
\end{theo}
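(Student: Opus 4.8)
The plan is to derive everything from the linear fractional representation of $\te\ci\Gamma$ obtained in Theorem \ref{t-LFT}. Starting from
\[
\te\ci\Gamma = D\ci{\Gamma^*}^{-1}(\te\ci\OZ-\Gamma)(\bI - \Gamma^*\te\ci\OZ)^{-1} D\ci{\Gamma}
\]
(all identities understood pointwise in $z$; the matrix $\bI-\Gamma^*\te\ci\OZ(z)$ is invertible since $\|\Gamma\|<1$ and $\|\te\ci\OZ(z)\|\le1$, and $D\ci\Gamma$, $D\ci{\Gamma^*}$ are invertible for the same reason), I would take adjoints, using that $D\ci\Gamma$ and $D\ci{\Gamma^*}$ are self-adjoint, to get
\[
\te\ci\Gamma^* = D\ci\Gamma (\bI - \te\ci\OZ^*\Gamma)^{-1}(\te\ci\OZ^* - \Gamma^*) D\ci{\Gamma^*}^{-1},
\]
and hence, using $D\ci{\Gamma^*}^{-2} = (\bI-\Gamma\Gamma^*)^{-1}$,
\[
\te\ci\Gamma^*\te\ci\Gamma = D\ci\Gamma (\bI - \te\ci\OZ^*\Gamma)^{-1}(\te\ci\OZ^* - \Gamma^*)(\bI-\Gamma\Gamma^*)^{-1}(\te\ci\OZ - \Gamma)(\bI - \Gamma^*\te\ci\OZ)^{-1} D\ci\Gamma.
\]

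Next I would subtract this from $\bI = D\ci\Gamma (\bI-\Gamma^*\Gamma)^{-1} D\ci\Gamma$, cancel the outer factors $D\ci\Gamma$ (legitimate, $D\ci\Gamma$ being invertible), and then multiply the resulting identity on the left by $\bI-\te\ci\OZ^*\Gamma$ and on the right by $\bI-\Gamma^*\te\ci\OZ$. This reduces the claimed formula $\Delta\ci\Gamma^2 = D\ci\Gamma(\bI-\te\ci\OZ^*\Gamma)^{-1}\Delta\ci\OZ^2(\bI-\Gamma^*\te\ci\OZ)^{-1}D\ci\Gamma$, with $\Delta\ci\Gamma^2 = \bI-\te\ci\Gamma^*\te\ci\Gamma$ and $\Delta\ci\OZ^2 = \bI-\te\ci\OZ^*\te\ci\OZ$, to the purely algebraic identity
\[
(\bI - t^*G)(\bI - G^*G)^{-1}(\bI - G^*t) - (t^* - G^*)(\bI - GG^*)^{-1}(t - G) = \bI - t^*t,
\]
where $G:=\Gamma$ is any strict contraction and $t := \te\ci\OZ(z)$ (in fact $t$ arbitrary).

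To prove this identity I would set $P:=(\bI-G^*G)^{-1}$, $Q:=(\bI-GG^*)^{-1}$ and use the intertwining relations $GP=QG$ and $PG^*=G^*Q$, which are the $\alpha=-2$ instance of Lemma \ref{l:G-D_G}. Combining these with $QGG^* = Q-\bI$ (from $GG^* = \bI-Q^{-1}$) and $G^*QG = PG^*G = P-\bI$ (from $G^*G = \bI-P^{-1}$), one computes
\[
(\bI - t^*G)P(\bI - G^*t) = P - t^*QG - G^*Qt + t^*Qt - t^*t,
\]
\[
(t^* - G^*)Q(t - G) = t^*Qt - t^*QG - G^*Qt + P - \bI,
\]
and subtracting the second line from the first, every term except $-t^*t$ and $\bI$ cancels, yielding exactly $\bI - t^*t$.

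The only genuine obstacle is the noncommutative bookkeeping: one must avoid commuting $\Gamma$ past $\te\ci\OZ$, and must keep straight which defect operator, $D\ci\Gamma$ or $D\ci{\Gamma^*}$, occurs on which side of each expression. Beyond that, the argument is entirely elementary; the only analytic input is the trivial bound $\|\te\ci\OZ(z)\|\le 1$, used solely to guarantee that $\bI-\Gamma^*\te\ci\OZ(z)$ is invertible, and Lemma \ref{l:G-D_G}, which is already available.
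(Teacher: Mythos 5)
Your proposal is correct and follows essentially the same route as the paper: both start from the linear fractional formula of Theorem \ref{t-LFT}, strip the outer factors $D\ci\Gamma$ and $(\bI-\Gamma^*\te\ci\OZ)^{\pm1}$, and reduce the claim to the algebraic identity $(\bI - \te\ci\OZ^*\Gamma)D\ci\Gamma^{-2}(\bI - \Gamma^*\te\ci\OZ) - (\te\ci\OZ^*-\Gamma^*)D\ci{\Gamma^*}^{-2}(\te\ci\OZ-\Gamma) = \Delta\ci\OZ^2$, verified via the commutation relations of Lemma \ref{l:G-D_G}. Your bookkeeping with $P$, $Q$ and the cancellations checks out, so the argument is sound.
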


\begin{proof}
By Theorem \ref{t-LFT} 
\begin{align*}
\theta\ci{\Gamma}
=
D\ci{\Gamma^*}^{-1}(  \theta\ci\OZ-\Gamma) (\OID\ci{\fD} - \Gamma^*  \theta\ci\OZ)^{-1} D\ci{\Gamma}, 
\end{align*}
so $\theta\ci\Gamma^*\theta\ci\Gamma = A^*B A$, where 
\begin{align*}
A=(\bI -\Gamma^* \theta\ci\OZ) D\ci\Gamma, \qquad B = (\theta\ci\OZ^*-\Gamma^*)  
D\ci{\Gamma*}^{-2} (\theta\ci\OZ-\Gamma)  .
\end{align*}
Then
$
\Delta\ci\Gamma = \bI - \theta\ci\Gamma^* \theta\ci\Gamma = A^*XA$, 
where 
\begin{align*}
X &=(A^*)^{-1} A^{-1} - B
= (\bI -\theta\ci\OZ^*\Gamma ) D\ci\Gamma^{-2} (\bI -\Gamma^* \theta\ci\OZ) - 
(\theta\ci\OZ^*-\Gamma^*)  D\ci{\Gamma*}^{-2} (\theta\ci\OZ-\Gamma) \\
& = 
D\ci\Gamma^{-2} - \theta\ci\OZ^*\Gamma D\ci\Gamma^{-2} - D\ci\Gamma^{-2}\Gamma^* \theta\ci\OZ +
\theta\ci\OZ^*\Gamma D\ci\Gamma^{-2} \Gamma^* \theta\ci\OZ
\\ &
\qquad\qquad\qquad  -\theta\ci\OZ^* D\ci{\Gamma^*}^{-2} \theta\ci\OZ + \Gamma^* D\ci{\Gamma^*}^{-2} \theta\ci\OZ + 
\theta\ci\OZ^* D\ci{\Gamma^*}^{-2} \Gamma -\Gamma^* D\ci{\Gamma^*}^{-2} \Gamma
\end{align*}
It follows from Lemma \ref{l:G-D_G} that $ D\ci\Gamma^{-2} \Gamma^* = \Gamma^*D\ci{\Gamma^*}^{-2}$ and that $\Gamma^* D\ci\Gamma^{-2} = D\ci{\Gamma^*}^{-2} \Gamma$, so in the above identity we have cancellation of non-symmetric terms, 
\begin{align*}
- \theta\ci\OZ^*\Gamma D\ci\Gamma^{-2} - D\ci\Gamma^{-2}\Gamma^* \theta\ci\OZ 
+ \Gamma^* D\ci{\Gamma^*}^{-2} \theta\ci\OZ + 
\theta\ci\OZ^* D\ci{\Gamma^*}^{-2} \Gamma  = 0. 
\end{align*}
Therefore
\begin{align*}
X & = D\ci\Gamma^{-2} + \theta\ci\OZ^*\Gamma D\ci\Gamma^{-2} \Gamma^* \theta\ci\OZ
-\theta\ci\OZ^* D\ci{\Gamma^*}^{-2} \theta\ci\OZ  -\Gamma^* D\ci{\Gamma^*}^{-2} \Gamma
\\ 
& = D\ci\Gamma^{-2} + \theta\ci\OZ^* D\ci{\Gamma^*}^{-2} \Gamma\Gamma^* \theta\ci\OZ
-\theta\ci\OZ^* D\ci{\Gamma^*}^{-2} \theta\ci\OZ  - D\ci{\Gamma}^{-2} \Gamma^* \Gamma 
\\ & 
= D\ci\Gamma^{-2} ( \bI - \Gamma^*\Gamma) + \theta\ci\OZ^* D\ci\Gamma^{-2}( \Gamma^*\Gamma -\bI) \theta\ci\OZ =\bI - \theta\ci\OZ^*\theta\ci\OZ =\Delta\ci\OZ .
\end{align*}
Thus we get that $\Delta\ci\Gamma = A^* \Delta\ci\OZ A$, which is exactly the conclusion of the theorem. 
\end{proof}

\subsection{Multiplicity of the absolutely continuous spectrum}
It is well-known that the Sz.-Nagy--Foia\c s model space reduces to the familiar one-story setting with $\mathcal{K}_\te = H^2(\fD_*)\ominus \te H^2(\fD)$ when $\te$ is inner. Indeed, for inner $\te$ the non-tangential boundary values of the defect $\Delta(\xi) = (\OID - \te^*(\xi) \te(\xi))^{1/2}= 0$ Lebesgue a.e.~$\xi\in \T$. So, the second component of the Sz.-Nagy--Foia\c s model space collapses completely.

Here we provide a finer result that reveals the matrix-valued weight function and the multiplicity of $U$'s absolutely continuous part.

Before we formulate the statement, we recall some terminology. First, we Lebesgue decompose the (scalar) measure $d\mu = d\mu\ti{ac}+d\mu\ti{sing}$. The absolutely continuous part of $U$ is unitarily equivalent to the multiplication by the independent variable $\xi$ on the von Neumann direct integral $\cH\ti{ac}= \int_\T^\oplus E(\xi) \dd \mu\ti{ac}(\xi).$ Note that the dimension of $E(\xi)$ is the multiplicity function of the spectrum.

Let $w$ denote the density of the absolutely continuous part of $\mu$, i.e.~$\dd\mu\ti{ac}(\xi) = w(\xi)\dd m(\xi)$. Then the matrix-valued function $\xi\mapsto B^*(\xi)B(\xi)w(\xi)$ is the absolutely continuous part of the matrix-valued measure
$
B^*B\mu
$.

\begin{theo}\label{t-AC}
The defect function $\Delta\ci\OZ$ of $\theta\ci\OZ$ and the absolutely continuous part $B^*Bw$ of the matrix-valued  measure $B^*B\mu$ are related by 
\begin{align}
\label{e-NOT}
(\OID - \te\ci\OZ^*(\xi))B^*(\xi)B(\xi)w(\xi)
(\OID - \te\ci\OZ(\xi))=
(\Delta\ci\OZ(\xi))^2
\end{align}
for Lebesgue a.e.~$\xi\in \T$.

The function $\bI-\theta\ci\OZ$ is invertible a.e.~on $\T$, so
 the multiplicity of the absolutely continuous part of $\mu$ is given by
\begin{align}\label{e-3}
\dim E(\xi)
=
\rank (\OID - \te\ci\OZ^*(\xi)\te\ci\OZ(\xi))
=
\rank\bigtriangleup\ci\OZ(\xi),
\end{align}
of course, with respect to Lebesgue a.e.~$\xi\in \T$.
\end{theo}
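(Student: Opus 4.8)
The plan is to combine the linear-fractional representation of $\te\ci\OZ$ from Corollary \ref{c:theta_0} with the classical (matrix-valued) Fatou theorem for Herglotz transforms. Recall that $\te\ci\OZ(z) = (F_2(z)-\bI)(F_2(z)+\bI)^{-1}$, where $F_2 = \cC_2[M\mu]$ and $M(\xi)=B^*(\xi)B(\xi)\ge \bO$; since $M\mu$ is a positive matrix-valued measure, $\re F_2(z)\ge 0$ on $\D$, so $\re(\bI+F_2(z))\ge\bI$ and $\bI+F_2(z)$ is boundedly invertible there with $\|(\bI+F_2(z))^{-1}\|\le 1$. Writing $\te\ci\OZ(z) = \bI - 2(\bI+F_2(z))^{-1}$ one gets the identity $\bI-\te\ci\OZ(z) = 2(\bI+F_2(z))^{-1}$, and a one-line computation then yields, for every $z\in\D$,
\[
\bI - \te\ci\OZ(z)^*\te\ci\OZ(z) \;=\; \tfrac12\,\bigl(\bI-\te\ci\OZ(z)^*\bigr)\,\bigl(F_2(z)+F_2(z)^*\bigr)\,\bigl(\bI-\te\ci\OZ(z)\bigr).
\]

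Next I would pass to the boundary. The contraction-valued function $\te\ci\OZ\in H^\infty(\fD\tto\fD_*)$ has nontangential boundary values a.e.\ on $\T$, and $F_2(z)+F_2(z)^* = 2\re F_2(z)$ is the Poisson integral of the positive matrix-valued measure $M\mu = B^*B\mu$, so by Fatou's theorem its nontangential limit is the density $B^*(\xi)B(\xi)w(\xi)$ of the absolutely continuous part of $B^*B\mu$ for Lebesgue-a.e.\ $\xi\in\T$. Letting $z=r\xi\to\xi$ in the displayed identity, the left-hand side tends to $\Delta\ci\OZ(\xi)^2 = \bI - \te\ci\OZ(\xi)^*\te\ci\OZ(\xi)$ and the right-hand side to $(\bI-\te\ci\OZ^*(\xi))\,B^*(\xi)B(\xi)w(\xi)\,(\bI-\te\ci\OZ(\xi))$, which is exactly \eqref{e-NOT}.

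For the invertibility statement, note that the entries of $\bI - \te\ci\OZ$ lie in $H^\infty$, and $\bI - \te\ci\OZ$ is not identically singular: $F_2(0) = \int_\T B^*B\,\dd\mu = \bI\ci\fD$ by orthonormality of $b_1,\dots,b_d$, so $\bI-\te\ci\OZ(0)=\bI$. Hence $\det(\bI-\te\ci\OZ)$ is a nonzero $H^\infty$ function and is therefore nonzero a.e.\ on $\T$, i.e.\ $\bI-\te\ci\OZ(\xi)$ is invertible a.e. Combining this with \eqref{e-NOT} gives $\rank\Delta\ci\OZ(\xi) = \rank\Delta\ci\OZ(\xi)^2 = \rank\bigl(B^*(\xi)B(\xi)w(\xi)\bigr)$ a.e.; where $w(\xi)>0$ this equals $\rank B^*(\xi)B(\xi) = \rank B(\xi) = \dim\cspa\{b_k(\xi):1\le k\le d\} = \dim E(\xi)$ by Lemma \ref{l:cycl-02} (using star-cyclicity of $\Ran\bB$), and where $w(\xi)=0$ both sides vanish. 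This is \eqref{e-3}.

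The main obstacle is the boundary passage: one must invoke the matrix-valued Fatou theorem to identify the nontangential limit of $\re F_2$ with the absolutely continuous density $B^*Bw$, and make sure the three factors in the algebraic identity have compatible nontangential limits a.e.\ (the two factors involving $\te\ci\OZ$ are immediate from $\te\ci\OZ\in H^\infty$, the middle one is the Poisson-integral fact). Everything else is routine linear algebra together with the standard fact that a nonzero $H^\infty$ function has nonvanishing boundary values a.e.
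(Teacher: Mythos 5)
Your proof is correct and follows essentially the same route as the paper: both establish the on-disc identity $(\OID-\te\ci\OZ^*)\,\cP(B^*B\mu)\,(\OID-\te\ci\OZ)=\OID-\te\ci\OZ^*\te\ci\OZ$ via the relation between $\te\ci\OZ$ and $F_2$ from Corollary \ref{c:theta_0}, pass to nontangential boundary values using the (entrywise) Fatou theorem for the Poisson extension, and obtain a.e.~invertibility of $\bI-\te\ci\OZ$ from the fact that $\det(\bI-\te\ci\OZ)$ is a not-identically-zero bounded analytic function. Your writing $\bI-\te\ci\OZ=2(\bI+F_2)^{-1}$ is only a cosmetic reorganization of the paper's computation, and your explicit rank bookkeeping for \eqref{e-3} (via Lemma \ref{l:cycl-02}) fills in details the paper leaves implicit.
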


Combining \eqref{e-3} with Theorem \ref{t-Delta} we obtain:

\begin{cor}
For Lebesgue a.e.~$\xi\in\T$ we have
$\dim E(\xi)
=
\rank\bigtriangleup\ci\Gamma(\xi)
$ for all strict contractions $\Gamma$.
\end{cor}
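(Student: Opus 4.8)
The plan is to read the result off from \eqref{e-3} in Theorem~\ref{t-AC} together with the transformation rule of Theorem~\ref{t-Delta}; essentially no new analysis is needed, only elementary linear algebra applied pointwise a.e.\ on $\T$.

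First I would fix, once and for all, a full-measure set $S_0\subset\T$ on which the nontangential boundary values $\theta\ci\OZ(\xi)$ exist, satisfy $\|\theta\ci\OZ(\xi)\|\le 1$, and realize the conclusion $\dim E(\xi)=\rank\Delta\ci\OZ(\xi)$ of Theorem~\ref{t-AC}. The point is that $S_0$ depends only on $U$ (equivalently on $\mu$ and $\bB$) and not on any perturbation parameter $\Gamma$.

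Next I would fix an arbitrary strict contraction $\Gamma$ and a point $\xi\in S_0$ and record the relevant invertibility: since $\|\Gamma\|<1$, the defect operators $D\ci\Gamma$ and $D\ci{\Gamma^*}$ are invertible, and since $\|\Gamma^*\theta\ci\OZ(\xi)\|\le\|\Gamma\|<1$, the matrix $\bI-\Gamma^*\theta\ci\OZ(\xi)$ is invertible as well. In particular $\theta\ci\Gamma(\xi)$ and $\Delta\ci\Gamma(\xi)$ are well defined at $\xi$ --- Theorem~\ref{t-LFT} even exhibits $\theta\ci\Gamma(\xi)$ by an explicit formula in $\theta\ci\OZ(\xi)$ and $\Gamma$ --- and the identity of Theorem~\ref{t-Delta}, being a pointwise algebraic consequence of Theorem~\ref{t-LFT} and Lemma~\ref{l:G-D_G}, is valid at $\xi$:
\[
\Delta\ci\Gamma(\xi)^2=A(\xi)^*\,\Delta\ci\OZ(\xi)^2\,A(\xi),\qquad A(\xi):=\bigl(\bI-\Gamma^*\theta\ci\OZ(\xi)\bigr)^{-1}D\ci\Gamma,
\]
where $A(\xi)$ is invertible as a product of invertible matrices. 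Then I would invoke two trivial rank identities: conjugation by an invertible matrix preserves rank, so $\rank\Delta\ci\Gamma(\xi)^2=\rank\Delta\ci\OZ(\xi)^2$; and for a positive semidefinite matrix $P$ one has $\ker P=\ker P^{1/2}$, hence $\rank P=\rank P^{1/2}$, applied to $P=\Delta\ci\Gamma(\xi)^2$ and to $P=\Delta\ci\OZ(\xi)^2$. Chaining these with \eqref{e-3} gives $\rank\Delta\ci\Gamma(\xi)=\rank\Delta\ci\OZ(\xi)=\dim E(\xi)$ for every $\xi\in S_0$ and every strict contraction $\Gamma$, which is the assertion.

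The only subtlety, and the closest thing to an obstacle here, is making sure the exceptional null set is chosen uniformly in $\Gamma$ rather than being $\Gamma$-dependent; this is precisely what the paragraph above arranges, since at a point $\xi\in S_0$ all the quantities entering Theorem~\ref{t-Delta} are determined by $\theta\ci\OZ(\xi)$ (through the explicit linear-fractional formula of Theorem~\ref{t-LFT}) together with the matrix $\Gamma$, so the algebraic identity --- and therefore the rank count --- holds simultaneously at every $\xi\in S_0$ for all $\Gamma$. Beyond that the argument is purely linear-algebraic.
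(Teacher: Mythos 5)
Your proposal is correct and follows exactly the route the paper intends: the paper states the corollary as an immediate combination of \eqref{e-3} with Theorem \ref{t-Delta}, and your argument simply supplies the (routine) details --- invertibility of the conjugating factor, invariance of rank under conjugation by an invertible matrix, $\rank P=\rank P^{1/2}$ for positive semidefinite $P$, and the observation that the exceptional null set depends only on $\theta\ci\OZ$ and not on $\Gamma$. Nothing further is needed.
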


Another immediate consequence is the following:
\begin{cor}
Operator $U$ has no absolutely continuous part on a Borel set $B\subset \T$ if and only if $\te\ci\OZ(\xi)$ (or, equivalently, $\te\ci\Gamma(\xi)$ for all strict contractions $\Gamma$) is unitary for Lebesgue almost every $\xi\in B$.
\end{cor}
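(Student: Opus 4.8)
The plan is to reduce every condition in the statement to a pointwise rank of the defect function. Since $\fD=\fD_*=\C^d$, each $\te\ci\Gamma(\xi)$ is a \emph{square} contractive matrix, so it is unitary precisely when $\Delta\ci\Gamma(\xi)=(\OID-\te\ci\Gamma^*(\xi)\te\ci\Gamma(\xi))^{1/2}=\OZ$: for a square contraction $\te^*\te=\OID$ forces $\te$ to be an isometry, hence unitary. Thus the conditions ``$\te\ci\OZ(\xi)$ unitary'', ``$\Delta\ci\OZ(\xi)=\OZ$'', and ``$\rank\Delta\ci\OZ(\xi)=0$'' coincide for a.e.~$\xi$, and likewise with $\OZ$ replaced by any strict contraction $\Gamma$. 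All the work is in relating these to the absence of absolutely continuous spectrum.

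First I would settle the parenthetical equivalence, namely that $\te\ci\OZ(\xi)$ is unitary a.e.~on $B$ if and only if $\te\ci\Gamma(\xi)$ is unitary a.e.~on $B$ for every strict contraction $\Gamma$. By Theorem~\ref{t-Delta} one can write
\[
\Delta\ci\Gamma^2 = N^* \Delta\ci\OZ^2 N, \qquad N:=(\OID-\Gamma^*\te\ci\OZ)^{-1} D\ci\Gamma,
\]
where $D\ci\Gamma$ is invertible because $\|\Gamma\|<1$, and $\OID-\Gamma^*\te\ci\OZ(\xi)$ is invertible for a.e.~$\xi$ since $\|\Gamma^*\te\ci\OZ(\xi)\|<1$. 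Hence $N$ is invertible a.e., so $\Delta\ci\Gamma(\xi)=\OZ$ if and only if $\Delta\ci\OZ(\xi)=\OZ$ (and in fact $\rank\Delta\ci\Gamma(\xi)=\rank\Delta\ci\OZ(\xi)$ a.e.). This is exactly the asserted equivalence of the two forms of the right-hand condition, so it suffices to prove the statement for $\Gamma=\OZ$.

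The core is the equivalence with the spectral condition. I would read ``$U$ has no absolutely continuous part on $B$'' as: the multiplicity of the absolutely continuous part of $\mu$ vanishes Lebesgue-a.e.~on $B$. By the last assertion of Theorem~\ref{t-AC}, equation~\eqref{e-3}, this multiplicity equals $\rank\Delta\ci\OZ(\xi)$ Lebesgue-a.e. One may instead read this directly off the boundary identity~\eqref{e-NOT}: since $\OID-\te\ci\OZ(\xi)$ (and its adjoint) is invertible for a.e.~$\xi$, taking ranks in~\eqref{e-NOT} gives $\rank\bigl(B^*(\xi)B(\xi)w(\xi)\bigr)=\rank\Delta\ci\OZ(\xi)^2=\rank\Delta\ci\OZ(\xi)$, while the left-hand rank is precisely the absolutely continuous multiplicity at $\xi$ (using star-cyclicity, Lemma~\ref{l:cycl-02}, to identify $\rank B^*(\xi)B(\xi)=\dim E(\xi)$ where $w>0$). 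Chaining these: $U$ has no a.c.~part on $B$ iff $\rank\Delta\ci\OZ(\xi)=0$ a.e.~on $B$, iff $\te\ci\OZ(\xi)$ is unitary a.e.~on $B$; combined with the previous paragraph this yields the full claim.

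The main obstacle, and essentially the only delicate point, is pinning down the precise meaning of ``no a.c.~part on $B$'' and matching it to the Lebesgue-a.e.~rank in~\eqref{e-3}. One must be careful that the honest dimension $\dim E(\xi)$ is a $\mu$-a.e.~notion, whereas the relevant absolutely continuous multiplicity is $\mathbf{1}_{\{w>0\}}(\xi)\dim E(\xi)$, a Lebesgue-a.e.~quantity, and it is exactly the latter that $\rank\bigl(B^*(\xi)B(\xi)w(\xi)\bigr)$ computes through~\eqref{e-NOT}. Once this identification is made explicit, everything else is routine rank and kernel bookkeeping based on the invertibility of $\OID-\te\ci\OZ$ and of $N$.
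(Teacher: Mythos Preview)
Your argument is correct and follows precisely the route the paper intends: the corollary is presented there as ``another immediate consequence'' of Theorem~\ref{t-AC} (specifically \eqref{e-3}) together with Theorem~\ref{t-Delta}, and you have simply written out the two steps---unitarity of the square contraction $\te\ci\Gamma(\xi)$ is equivalent to $\rank\Delta\ci\Gamma(\xi)=0$, and this rank equals the a.c.~multiplicity---in full detail. Your care with the Lebesgue-a.e.\ versus $\mu$-a.e.\ distinction for $\dim E(\xi)$ is appropriate and is the only point that needs comment when unpacking the paper's one-line claim.
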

This corollary is closely related to the main result of \cite[Theorem 3.1]{DL2013}. Interestingly, it appears that the proof (in \cite{DL2013}) of that result cannot be refined to yield our current result (Theorem \ref{t-AC}).

\begin{cor}\label{c-TFAE}
In particular, we confirm that the following are equivalent:
\begin{enumerate}
\item $U$ is purely singular,
\item $\te\ci\Gamma(\xi)$ is inner for one (equivalently any) strict contraction $\Gamma$,
\item $\Delta\ci\Gamma \equiv \OZ$ for one (equivalently any) strict contraction $\Gamma$,
\item the second story of the Sz.-Nagy--Foia\c s model space collapses (and we are dealing with the model space $\cK_{\te\ci\Gamma} = H^2(\C^d) \ominus \te\ci\Gamma H^2(\C^d)$ for one (equivalently any) strict contraction $\Gamma$).
\end{enumerate}
\end{cor}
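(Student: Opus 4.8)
The plan is to observe that the analytic substance has already been supplied by Theorems~\ref{t-AC} and~\ref{t-Delta}, and that each of the four conditions is, after a routine translation, a restatement of ``$\dim E(\xi)=0$ for Lebesgue a.e.\ $\xi$''. I would organize the argument so that the equivalences holding for an \emph{individual} strict contraction $\Gamma$ are established first, and the independence from the choice of $\Gamma$ is extracted at the end.

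First I would dispose of the two equivalences that are valid verbatim for each fixed strict contraction $\Gamma$. By definition $\te\ci\Gamma$ is inner exactly when its boundary values are isometries a.e., i.e.\ when $\OID\ci{\C^d}-\te\ci\Gamma^*(\xi)\te\ci\Gamma(\xi)=\OZ$ a.e., i.e.\ when $\Delta\ci\Gamma\equiv\OZ$; this is (ii)$\Leftrightarrow$(iii). Inspecting \eqref{N-F-K_theta}, the second component of $\cK_{\te\ci\Gamma}$ is $\clos\Delta\ci\Gamma L^2(\C^d)$, which is trivial precisely when $\Delta\ci\Gamma(\xi)=\OZ$ a.e.; and in that case $\begin{pmatrix}\te\ci\Gamma\\ \Delta\ci\Gamma\end{pmatrix}H^2(\C^d)$ reduces to its first coordinate, so $\cK_{\te\ci\Gamma}=H^2(\C^d)\ominus\te\ci\Gamma H^2(\C^d)$; this is (iii)$\Leftrightarrow$(iv).

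Next I would link (iii) to (i) through the distinguished value $\Gamma=\OZ$. By \eqref{e-3} in Theorem~\ref{t-AC}, for Lebesgue a.e.\ $\xi$ we have $\dim E(\xi)=\rank\Delta\ci\OZ(\xi)$, so $\Delta\ci\OZ\equiv\OZ$ if and only if $\dim E(\xi)=0$ a.e., i.e.\ if and only if the absolutely continuous part of $\mu$ vanishes, i.e.\ if and only if $U$ is purely singular. (One may instead argue straight from \eqref{e-NOT}: its left-hand side vanishes a.e.\ once $w=0$ a.e., and conversely, since $\OID-\te\ci\OZ$ is invertible a.e.\ and, by star-cyclicity and Lemma~\ref{l:cycl-02}, $B^*(\xi)B(\xi)\ne\OZ$ wherever $w(\xi)>0$, vanishing of $\Delta\ci\OZ$ forces $w=0$ a.e.) To upgrade from $\Gamma=\OZ$ to an arbitrary strict contraction, I would use the first corollary to Theorem~\ref{t-AC} (itself a consequence of Theorem~\ref{t-Delta}): for \emph{every} strict contraction $\Gamma$ and Lebesgue a.e.\ $\xi$, $\rank\Delta\ci\Gamma(\xi)=\dim E(\xi)$. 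Hence ``$\Delta\ci\Gamma\equiv\OZ$ for one strict contraction $\Gamma$'' is equivalent to ``$\dim E(\xi)=0$ a.e.'', a condition not involving $\Gamma$; combining this with (ii)$\Leftrightarrow$(iii)$\Leftrightarrow$(iv) for each fixed $\Gamma$ and with (i)$\Leftrightarrow$(iii)$|_{\Gamma=\OZ}$ gives the full four-way equivalence together with the ``one (equivalently any)'' qualifications.

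I do not anticipate a real obstacle: the nontrivial work resides in Theorems~\ref{t-AC} and~\ref{t-Delta}, which are available. The only care needed is bookkeeping --- keeping the quantifier over $\Gamma$ uniform (this is precisely what the first corollary to Theorem~\ref{t-AC} delivers) and reading $\dim E(\xi)$ in \eqref{e-3} as the multiplicity of the absolutely continuous part of $\mu$, hence $0$ off its support.
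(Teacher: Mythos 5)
Your proposal is correct and follows the same route the paper intends: the paper states Corollary \ref{c-TFAE} without a separate proof, treating it as immediate from \eqref{e-3} in Theorem \ref{t-AC} (which ties $\rank\Delta\ci\OZ$ to the multiplicity of the a.c.\ part), the corollary combining \eqref{e-3} with Theorem \ref{t-Delta} (which makes $\rank\Delta\ci\Gamma$ independent of $\Gamma$), and the definitions of inner function and of the Sz.-Nagy--Foia\c s model space. Your bookkeeping of the quantifier over $\Gamma$ and the translation of each condition into ``$\dim E(\xi)=0$ a.e.'' is exactly the intended argument.
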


\begin{proof}[Proof of Theorem \ref{t-AC}]
Take $\Gamma\equiv\OZ$. Solving \eqref{theta_0-02} for $F_2$ we see
\[
F_2(z) = [\OID + \te\ci\OZ(z)][\OID - \te\ci\OZ(z)]^{-1}
.
\]

Let $\cP(B^*B\mu)$ denote the Poisson extension of the matrix-valued measure $B^*B\mu$ to the unit disc $\D$. Since $F_2= \cC_2 B^*B\mu$, we can see that $\cP(B^*B\mu)= \re F_2$ on $\D$, so  
\[
\cP(B^*B\mu)=\re F_2
=
\re[(\OID+\te\ci\OZ)(\OID-\te\ci\OZ)^{-1}].
\]
Standard computations yield
\begin{align*}
\cP(B^*B\mu)
&=
\re[(\OID+\te\ci\OZ)(\OID-\te\ci\OZ)^{-1}]
=
\frac{1}{2} [(\OID+\te\ci\OZ)(\OID-\te\ci\OZ)^{-1}+(\OID-\te\ci\OZ^*)^{-1}(\OID+\te\ci\OZ^*)]\\
&=
\frac{1}{2} (\OID-\te\ci\OZ^*)^{-1}\left[(\OID-\te\ci\OZ^*)(\OID+\te\ci\OZ)+(\OID+\te\ci\OZ^*)(\OID-\te\ci\OZ)\right](\OID-\te\ci\OZ)^{-1}\\
&=
\frac{1}{2} (\OID-\te\ci\OZ^*)^{-1}[\OID - \te\ci\OZ^*\te\ci\OZ](\OID-\te\ci\OZ)^{-1}
=
(\OID-\te\ci\OZ^*)^{-1}\re [\OID - \te\ci\OZ^*\te\ci\OZ](\OID-\te\ci\OZ)^{-1}\\
&=(\OID-\te\ci\OZ^*)^{-1}[\OID - \te\ci\OZ^*\te\ci\OZ](\OID-\te\ci\OZ)^{-1}
\end{align*}
on $\D$. Note that for any characteristic function $\theta$ and $z\in\D$ the matrix $\theta(z)$ is a strict contraction, so in our case $\bI -\theta\ci\OZ$ is invertible on $\D$, and all computations are justified. 

We can rewrite the above identity as 
\begin{align*}
(\bI-\theta\ci\OZ)^*\cP(B^*B\mu) (\bI -\theta\ci\OZ) = \OID - \te\ci\OZ^*\te\ci\OZ, 
\end{align*}
and taking the non-tangential boundary values we get \eqref{e-NOT}. Here we used the Fatou Lemma (see e.g.~\cite[Theorem 3.11.7]{NikERI}) which says that for a complex measure $\tau$ the non-tangential boundary values of its Poisson extension $\cP\tau$ coincide a.e.~with the density of the absolutely continuous part of $\tau$; applying this lemma entrywise we get what we need in the left hand side.

To see that the boundary values of $\bI-\te\ci\OZ$ are invertible a.e.~on $\T$ we notice that $z\mapsto \det (\bI- \te\ci\OZ(z)) $ is a bounded analytic function on $\D$, so its boundary values are non-zero a.e.~on $\T$. 
%
%
%
%
%
\end{proof}

\section{What is wrong with the universal representation formula and what to do about it?}\label{s-Explanations}

There are several things that are not completely satisfactory with the universal representation formula given by Theorem \ref{t-repr}.

First of all, it is defined only on functions of form $hb$, where $h\in C^1$ is a scalar function and $b\in\Ran\bB$.  Of course, one can than define it on a dense set, for example on the dense set of linear combinations $f=\sum_k h_k, b_k$, where $b_k$ are columns of the matrix $B$, $b_k=\bB e_k$, and $h_k\in C^1(\T)$. But the use of functions $b$ (or $b_k$) in the representation is a bit bothersome, especially taking into account that the representation $f=\sum_k h_k b_k$ is not always unique. So, it would be a good idea to get rid of the function $b$. 

The second thing is that while the representation formula looks like a singular integral operator (Cauchy transform), it is not represented as a classical singular integral operator, so it is not especially clear if the (well developed) theory of such operators apply in our case. So, we would like to represent the operator in more classical way.

Denoting
$C_1(z):=C_*(z)- z C(z)$ and using  the formal Cauchy-type expression
\[
(T^{B^* \mu} f )(z)
=
\int\ci{\T}
\frac{1}{1-z\bar\xi}\, B^*(\xi) f(\xi) d\mu(\xi),
\]
we can, performing formal algebraic manipulations, rewrite \eqref{e-repr} as
\begin{align}\label{e-reprSzNFExplanations}
(\Phi^* h b)(z)
=
C_1(z)
(T^{B^* \mu} h b)(z)
+h(z)
[C_*(z){\bB}^*b
-C_1(z)
(T^{B^* \mu} b)(z)]
,\quad z\in \T.
\end{align}

So, is it possible to turn these formal manipulations into meaningful mathematics? And the answer is ``yes'': the formula \eqref{e-reprSzNFExplanations} gives the representation of $\Phi^*$ if one interprets $T^{B^*\mu}f$ as the boundary values of the Cauchy Transform $\cC[B^*f\mu](z)$, $z\notin \T$, see the definition in the next section. 

In the next section (Section \ref{s-SIO}) we present necessary facts about (vector-valued) Cauchy transform and its regularization, that will allow us to interpret and justify the formal expression \eqref{e-reprSzNFExplanations}.  We will complete this justification in Section \ref{ss-PhiStarSNF}, see \eqref{form-repr-01}. This representation is a universal one, meaning that it works in any transcription of the model, but still involves the function $b\in \Ran\bB$.

The function $b$ is kind of eliminated Proposition \ref{p:repr2} below, and as it is usually happens in the theory of singular integral operators, the operator $\Phi^*$ splits into the singular integral part (weighted boundary values of the Cauchy transform)  and the multiplication part. The function $b$ becomes hidden in the multiplication part, and at the first glance it is not clear why this part is well defined. 

Thus the representation given by Proposition \ref{p:repr2} is still not completely satisfactory (the price one pays for the universality), but it is a step  to obtain a nice representations for a fixed transcription of a model. Thus we were able to obtain  a precise and unambiguous representation of $\Phi^*$ in the Sz.-Nagy--Foia\c{s} transcription, see Theorem \ref{t-repr3} which is the main result of Section \ref{ss-PhiStarSNF}.

\section{Singular integral operators}\label{s-SIO}

\subsection{Cauchy type integrals}

For a finite (signed or even complex-valued) measure $\nu$ on $\T$  its Cauchy Transform $\cC\nu$ is defined as 
\begin{align*}
\cC\nu(z) =\cC[\nu](z) = \int_\T\frac{\dd\nu(\xi)}{1-\bar\xi z} \,, \qquad z\in \C\setminus \T. 
\end{align*}
It is a classical fact that $\cC\nu(z) $ has non-tangential boundary values as $z\to z_0\in\T$ from the inside and from the outside of the disc $\D$. So, given a finite positive Borel measure $\mu$ one can define operators $T_\pm^\mu$ from $L^1(\mu; E)$ to the space of measurable functions on $\T$ as the non-tangential boundary values from inside and outside of the unit disc $\D$, 
\[
(T_{+}^\mu f)(z_0)
=
\text{n.t.-}\lim_{\substack{z\to z_0\\z\in\D}}
\cC [f\mu](z)\,,
 \qquad
 \qquad
(T_{-}^\mu f)(z_0)
=
\text{n.t.-}\lim_{\substack{z\to z_0\\ z\notin \overline{\D}}}
\cC [f\mu](z)
\,.
\]

One can also define the regularized operators $T^\mu_r$, $r\in (0,\infty)\setminus \{1\}$, and the restriction of $\cC [f\mu]$ to the circle of radius $r$, 
\begin{align*}
T^\mu_r f(z) = \cC [f \mu ](rz). 
\end{align*}

Everything can be extended to the case of vector and matrix valued measures; there are some technical details that should be taken care of in the infinite dimensional case, but in our case everything is finite dimensional ($\dim E\le d<\infty$), so the generalization is pretty straightforward. 

So, given a (finite, positive) scalar measure $\mu$ and a matrix-valued function $B^*$ (with entries in $L^2(\mu)$) and vector-valued function $f\in L^2(\mu; E)$ we can define  $T^{B^*\mu}_\pm f$ and $T_r^{B^*\mu} f$ as the non-tangential boundary values and the restriction to the circle of radius $r$ respectively of the Cauchy transform $\cC[ B^*f\mu](z)$. Modulo slight abuse of notation this notation agrees with the accepted notation for the scalar case.  

In what follows the function $B^*$ will be the function $B^*$ from Theorem \ref{t-repr}. 

\subsection{Uniform boundedness of the boundary Cauchy operator and its regularization}

For a finite Borel measure $\nu$ on $\T$ and $n\in \Z$ define 
\begin{align*}
P_n \nu (z) = \left\{ 
\begin{array}{ll} \sum_{k=0}^n \hat \nu (k) z^k \qquad & n\ge 0, \\
\sum_{k=n}^{-1} \hat \nu (k) z^k \qquad & n< 0 ; \end{array} \right. 
\end{align*}
here $\hat\nu(k)$ is the Fourier coefficient of $\nu$, $\hat\nu(k) = \int_\T \xi^{-k} \dd\nu(\xi)$. 

Recall that $C_1(z) := C_*(z) - zC(z)$ where $C_*$ and $C$ are from Theorem \ref{t-repr}. 

Recall that if $W$ is a matrix-valued weight (i.e.~a function whose values $W(\xi)$ are positive semidefinite operators on a finite-dimensional space $H$), then the norm in the weighted space $L^2(W;H)$ is defined as 
\[
\|f\|\ci{L^2(W; H)}^2 = \int_\T (W(\xi) f(\xi) , f(\xi))\ci H \dd m(\xi).
\]

We are working with the model space $\cK_\theta$ which is a subspace of a weighted space $L^2(W;\fD_*\oplus\fD)$ (the weight could be trivial, $W\equiv \bI$, as in the case of Sz.-Nagy--Foia\c{s} model). 

Define $\wt C_1:= W^{1/2} C_1$. 
The function $\wt C_1^*\wt C_1$ is a matrix-valued weight, whose values are operators on $\fD_*\oplus \fD$, so we can define the weighted space $L^2(\wt C_1^*\wt C_1) = L^2(\wt C_1^*\wt C_1;\fD_*\oplus\fD)$. Note that 
\[
\|f\|\ci{L^2(\wt C_1^*\wt C_1)} = \|\wt C_1 f\|\ci{L^2(\fD_*\oplus\fD)} = \| C_1 f\|\ci{L^2(W;\fD_*\oplus\fD)} . 
\]

\begin{lem}
\label{l:BdPn}
The operators $P_n^{B^*\mu}:\cH\subset L^2(\mu;E) \to L^2(\wt C_1^*\wt C_1;\fD_*\oplus\fD)$ defined by 
\begin{align*}
P_n^{B^*\mu} f := P_n (B^*f\mu), \qquad n\in \Z
\end{align*}
are uniformly in $n$ bounded with norm at most $2$, i.e.
\begin{align*}
\| \wt C_1P_n({B^*\mu} f)\|\ci{ L^2(\fD_*\oplus\fD)} \le 2\|f\|\ci{L^2(\mu;E)}. 
\end{align*}
\end{lem}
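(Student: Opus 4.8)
The plan is to reduce the bound for $P_n^{B^*\mu}$ to a statement about the adjoint Clark operator $\Phi^*$ (or rather about the partial sums of its Fourier/Taylor expansion), exploiting that $\Phi$ is unitary. First I would observe that the operators $P_n$ acting on a measure $\nu$ are, up to the factor $B^*$, just Riesz-type projections onto trigonometric polynomials of degrees in $[0,n]$ (for $n\ge 0$) or $[n,-1]$ (for $n<0$). So it suffices to test $P_n^{B^*\mu}$ against functions $f = \xi^m b$ with $b\in\Ran\bB$ and $m\in\Z$, and then to assemble the general estimate from the mutual orthogonality of the spaces $z^k\fD$ in $L^2(\mu;E)$ together with the orthogonality of the ranges $z^k\fD\ci{\cM}$, $z^k\fD\ci{\cM^*}$ inside $L^2(W;\fD_*\oplus\fD)$ (this last orthogonality is exactly what is recorded in the proof of Lemma \ref{l:bound-C}).

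Next I would connect $\wt C_1 P_n(B^*f\mu)$ to the action of $\Phi^*$. Recall from the proof of Theorem \ref{t-repr} (equations \eqref{e-PhiUn}, \eqref{e-application} and the geometric-sum computation \eqref{e-geometric}) that for $b\in\Ran\bB$ one has
\[
(\Phi^* \xi^n b)(z) = z^n C_*(z)\bB^* b + (C_*(z)-zC(z))\int_\T \frac{\xi^n-z^n}{1-z\bar\xi}B^*(\xi)b(\xi)\,d\mu(\xi),
\]
and the integral here is precisely $T^{B^*\mu}_r$-type data whose Taylor coefficients are the $\widehat{B^*b\mu}(k)$. Expanding the Cauchy kernel $\tfrac{1}{1-z\bar\xi} = \sum_{k\ge 0} z^k\bar\xi^k$ and matching coefficients, the partial sums $C_1\cdot P_n(B^*b\mu)$ appear as (parts of) the partial Fourier/Taylor sums of $\Phi^*(\text{polynomial}\cdot b)$, the remaining pieces being either $h(z)C_*(z)\bB^*b$-type terms or tail terms that telescope. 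Since $\Phi^*$ is an isometry from $\cK_\te\subset L^2(W;\fD_*\oplus\fD)$ to $\cH$, each such partial sum has norm controlled by $\|f\|\ci{L^2(\mu;E)}$, and careful bookkeeping of how many ``copies'' of a unit-norm piece of $\Phi^*f$ enter $\wt C_1 P_n(B^*f\mu)$ yields the constant $2$; the two comes from splitting $\tfrac{\xi^n-z^n}{1-z\bar\xi}$ into its two natural pieces (one analytic in $z$, one in $\bar\xi$), each contributing a norm-$1$ projection. This is the computation already foreshadowed in Section \ref{s-Explanations}, and I would use \eqref{e-reprSzNFExplanations} as the organizing identity, now read coefficient-by-coefficient.

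Concretely, the steps in order: (i) reduce to testing on $f=\sum_k \xi^{m_k} b_k$, $b_k$ the columns of $B$, and use orthogonality of $\{z^j\fD\}$ to reduce to a single monomial times a single $b$; (ii) write $P_n(B^*b\mu)$ in terms of the Taylor coefficients of the vector-valued Cauchy transform $\cC[B^*b\mu]$, and identify $\wt C_1\cdot(\text{these partial sums})$ with a bounded combination of truncations of $\Phi^*(\xi^m b)$ via the coefficient form of \eqref{e-repr}; (iii) invoke that $\Phi^*$ is an isometry and that truncation of a Fourier series to an interval of frequencies is a contraction in $L^2(W;\fD_*\oplus\fD)$ on the relevant (mutually orthogonal) pieces, which gives the bound by $2\|f\|$; (iv) pass from trigonometric polynomials to general $f\in\cH\subset L^2(\mu;E)$ by density, using that $B^*f\in L^1(\mu)$ so the Fourier coefficients and hence $P_n(B^*f\mu)$ depend continuously on $f$.

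The main obstacle I anticipate is step (iii), namely making precise ``truncation is a contraction'' in the weighted space: one must be careful that the frequency-truncation operator is being applied on a subspace where the relevant family of translates $\{z^k\fD\ci{\cM}\}$ (resp.\ $\{z^k\fD\ci{\cM^*}\}$) really is orthogonal — which is guaranteed by minimality of the unitary dilation, as in Lemma \ref{l:bound-C} — and to track the bookkeeping so that the two pieces of the split kernel do not interact and genuinely give $2$ rather than a larger constant. A secondary technical point is that $C_1$ itself need not be bounded (only $\wt C_1=W^{1/2}C_1$ is, by Lemma \ref{l:bound-C}), so all estimates must be carried out for $\wt C_1P_n(B^*f\mu)$ in $L^2(\fD_*\oplus\fD)$ directly, never for $C_1P_n$ alone.
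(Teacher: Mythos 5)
Your high-level instinct---that the bound must come from the unitarity of $\Phi^*$ combined with the representation formula of Theorem \ref{t-repr}---is correct, but the two load-bearing steps of your outline fail. Step (i): the subspaces $\xi^k\Ran\bB$, $k\in\Z$, are \emph{not} mutually orthogonal in $\cH\subset L^2(\mu;E)$ for a general measure $\mu$ (their pairwise inner products are Fourier coefficients of the matrix-valued measure $B^*B\mu$), so you cannot reduce to a single monomial $\xi^m b$ and reassemble by Pythagoras. Step (iii) is the more serious problem: $\wt C_1 P_n(B^*f\mu)$ is not a frequency truncation of $\Phi^*f$ inside $L^2(W;\fD_*\oplus\fD)$ (it is a polynomial of degree at most $n$ postmultiplied by the non-polynomial function $\wt C_1$), and ``truncation to a band of frequencies is a contraction'' is exactly the kind of statement that fails in weighted spaces. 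Worse, the unweighted partial sums are not even uniformly bounded: take $d=1$, $\mu=\delta_1$, $b\equiv1$, $f\equiv1$; then $\widehat{B^*f\mu}(k)=1$ for every $k$, so $\|P_n(B^*f\mu)\|^2_{L^2(m)}=n+1\to\infty$ while $\|f\|_{L^2(\mu)}=1$. The entire content of the lemma is a cancellation produced by the specific factor $\wt C_1$, and no argument that first controls the truncation and only afterwards applies $\|\wt C_1\|_\infty\le2$, or that treats the truncation as an orthogonal projection, can detect this cancellation.

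The paper captures the cancellation through one exact identity, which your ``tail terms that telescope'' gestures at but never produces. For $f=hb$ with $h\in C^1(\T)$ and $b\in\Ran\bB$, subtract from \eqref{e-repr} the same formula applied to $\bar\xi^{\,n}f$ and premultiplied by $z^n$: the $h(z)C_*(z)\bB^*b$ terms and all dependence on the particular $h$ collapse, leaving
\[
\Phi^*f - z^{n}\,\Phi^*(\bar\xi^{\,n}f)
= C_1(z)\int_\T\frac{1-(\bar\xi z)^n}{1-\bar\xi z}\,B^*(\xi)h(\xi)b(\xi)\,\dd\mu(\xi)
= \begin{cases} C_1 P_{n-1}(B^*f\mu), & n\ge 1,\\ -\,C_1 P_{n}(B^*f\mu), & n<0.\end{cases}
\]
Since $\Phi^*$ is unitary and multiplication by $z^n$ and by $\bar\xi^{\,n}$ are isometries, the triangle inequality gives $\|\wt C_1P_m(B^*f\mu)\|\le 2\|f\|$ immediately; the constant $2$ is ``one plus one'' from two full copies of $\Phi^*$, not two norm-one projections obtained by splitting the Cauchy kernel. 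The identity extends by linearity to the dense set of finite sums $\sum_k h_kb_k$, and the passage to all of $\cH$ needs only that each individual operator $f\mapsto\wt C_1P_n^{B^*\mu}f$ is bounded (not uniformly), which follows from $B^*f\mu\in L^1$ together with $\|\wt C_1\|_\infty\le 2$ from Lemma \ref{l:bound-C}. Without this identity your argument does not close.
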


\begin{proof}
The  columns $b_k$ of $B$ are in $\cH\subset L^2(\mu;E)$, so $B^*f\mu \in L^1(\mu;\fD)$, and therefore operators $P_n^{B^*\mu}$ are bounded operators $\cH\to L^2(\fD)$. It follows from Lemma \ref{l:bound-C} that $\| \wt C_1\|_\infty \le 2$, so operator $f\mapsto \wt C_1 P_n^{B^*\mu} f$ are bounded operators $\cH\to L^2(\fD_*\oplus\fD)$ (notice that we do not claim the uniform in $n$ bounds here). 
Therefore, it is sufficient to check the  uniform boundedness on a dense set. 

Take $f=hb$ where $b\in \Ran\bB$ and  $h\in C^1(\T)$ is scalar-valued. Then for $n\in\Z$ we have by Theorem  \ref{t-repr}
\begin{align*}
\Phi^* f & - z^{n} \Phi^*(\bar\xi^{n} f ) \\
& = C_1(z)\int_\T \frac{h(\xi) - h(z)}{1-\overline \xi z} B^*b\dd\mu(\xi) - 
z^nC_1(z)\int_\T \frac{\bar\xi^{n}h(\xi) - \bar z^{n}h(z)}{1-\overline \xi z} B^*b\dd\mu(\xi) 
\\
& = C_1(z)\int_\T \frac{ 1- (\bar\xi z)^{n} }{1-\overline \xi z} B^*hb\dd\mu(\xi) 
\end{align*}
Expressing $\frac{ 1- (\bar\xi z)^{n} }{1-\overline \xi z}$ as a sum of geometric series we get that for $f=hb$, $h\in\C^1(\T)$
\begin{align*}
\Phi^* f  - z^{n} \Phi^*(\bar\xi^{n} f ) =
\left\{ \begin{array}{ll} C_1 P_{n-1}(B^*f\mu), \qquad & n\ge 1, \\ -C_1 P_n(B^*f\mu) , & n<0. \end{array}\right.
\end{align*}
By linearity the above identity holds for a dense set of linear combinations $f= \sum_k h_k b_k$, $h_k\in C^1(\T)$.  The operators $\Phi^*:\cH \to \cK_{\theta}\subset L^2(W; \fD_*\oplus \fD)$ are bounded (unitary) operators, so the desired estimate holds on the above dense set. 
\end{proof}

For a measure $\nu$ on $\T$ let $T_r\nu$ be the restriction of the Cauchy transform of $\nu$ to the circle of radius $r\ne 1$, 
\[
T_r\nu(z) = \int_\T \frac{\dd\nu(\xi)}{1- r\bar \xi z}, \qquad z\in \T. 
\]
Define operators $T_r^{B^*\mu} $ on $L^2(\mu;E)$ as 
\[
T_r^{B^*\mu} f = T_r(B^*f\mu). 
\] 

The lemma below is an immediate corollary of the above Lemma \ref{l:BdPn}. 

\begin{lem}
\label{l:BdT_r}
The operators $T_r^{B^*\mu}:\cH\subset L^2(\mu; E) \to L^2(\wt C_1^*\wt C_1;\fD_*\oplus\fD)$ are uniformly in $r$ bounded with norm at most $2$, i.e.
\begin{align*}
\|  \wt C_1 T_r^{B^*\mu} f\|\ci{L^2(\fD_*\oplus\fD)} \le 2 \|f\|\ci{L^2(\mu;E)}
\end{align*}
\end{lem}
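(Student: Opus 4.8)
The plan is to derive Lemma~\ref{l:BdT_r} directly from Lemma~\ref{l:BdPn} by realizing the regularized Cauchy kernel $\frac{1}{1-r\bar\xi z}$ (for fixed $r\ne 1$) as a norm-convergent series of the monomials already controlled by the partial-sum operators $P_n$. Concretely, for $|r|<1$ write
\[
\frac{1}{1-r\bar\xi z}=\sum_{k=0}^\infty r^k (\bar\xi z)^k,
\]
so that for a measure $\nu=B^*f\mu$ on $\T$ one has $T_r\nu(z)=\sum_{k=0}^\infty r^k z^k \hat\nu(k)$; the key observation is that the Abel-type partial sum $\sum_{k=0}^n r^k z^k\hat\nu(k)$ is obtained from $P_n\nu$ by a single summation by parts, namely
\[
\sum_{k=0}^{n} r^k z^k\hat\nu(k)
=(1-r)\sum_{m=0}^{n-1} r^m (P_m\nu)(z) + r^n (P_n\nu)(z).
\]
Since each $\widetilde C_1 P_m^{B^*\mu}$ has norm at most $2$ by Lemma~\ref{l:BdPn}, the right-hand side has $L^2(\fD_*\oplus\fD)$-norm at most $2\big((1-r)\sum_{m=0}^{n-1}r^m + r^n\big)\|f\|\le 2\|f\|$ uniformly in $n$. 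Letting $n\to\infty$ and using that the series for $T_r\nu$ converges (uniformly on $\T$, hence in $L^2$, because $r<1$ and $\hat\nu(k)$ is bounded), we get $\|\widetilde C_1 T_r^{B^*\mu}f\|\le 2\|f\|$. The case $r>1$ is handled identically after replacing $\frac{1}{1-r\bar\xi z}$ by the geometric expansion in $\frac{1}{r\bar\xi z}$, which involves the negatively-indexed $P_n$'s (again covered by Lemma~\ref{l:BdPn}); one must be slightly careful that the constant-in-$n$ bound survives, but the summation-by-parts identity and the bound $(1/r-1)\sum_m (1/r)^m + (1/r)^n\le 1$ for $r>1$ give exactly the same factor $2$.

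The main technical point to check is the interchange of the limit in $n$ with the norm estimate, i.e.\ that $\widetilde C_1 T_r^{B^*\mu} f$ is genuinely the $L^2$-limit of $\widetilde C_1$ applied to the truncated Abel sums. This is where one uses that $f\in\cH\subset L^2(\mu;E)$ so that the columns of $B$ lie in $L^2(\mu)$ and hence $B^*f\mu\in L^1(\mu;\fD)$ with $|\hat\nu(k)|\le \|B^*f\|_{L^1(\mu)}$ bounded; combined with $r\ne 1$ this forces $\sum_k r^k z^k\hat\nu(k)$ to converge absolutely and uniformly on $\T$, so the truncations converge in $L^2(\fD)$, and since $\widetilde C_1$ is a bounded multiplier $L^2(\fD)\to L^2(\fD_*\oplus\fD)$ (Lemma~\ref{l:bound-C}, giving $\|\widetilde C_1\|_\infty\le 2$) the image converges in $L^2(\fD_*\oplus\fD)$ to $\widetilde C_1 T_r^{B^*\mu}f$. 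After that the uniform bound passes to the limit and the lemma follows.

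An alternative, even shorter route — which I would mention as a remark rather than as the main proof — is to observe that $T_r\nu$ is literally a Poisson-type average of the partial sums: since $T_r^{B^*\mu}$ can be written as $T_r^{B^*\mu} f(z) = \int_\T \frac{1}{1-r\bar\xi z}B^*(\xi)f(\xi)\,d\mu(\xi)$ and the kernel $\frac{1}{1-rw}$ for $|r|<1$ is a convex-type combination of the Dirichlet kernels $\sum_{k=0}^n w^k$ with nonnegative weights summing to $\le 1$, the operator $T_r^{B^*\mu}$ is an average of the $P_n^{B^*\mu}$, and averages do not increase the supremum of the norm. Either way, no new estimate is needed; the content of the lemma is precisely the reduction of the regularized Cauchy operator to the already-established uniform bound for $P_n^{B^*\mu}$, and the only obstacle worth flagging is the bookkeeping that ensures the numerical prefactor stays exactly $2$ and that the limiting argument is legitimate for both $r<1$ and $r>1$.
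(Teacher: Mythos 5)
Your argument is correct and is essentially the paper's own proof: the summation-by-parts identity with coefficients $(1-r)r^m$ plus the boundary term $r^nP_n\nu$ is exactly the finite-truncation form of the paper's representation $T_r^{B^*\mu}=\sum_{n\ge0}(r^n-r^{n+1})P_n^{B^*\mu}$ (and its analogue with negatively indexed $P_n$ for $r>1$), i.e.\ $T_r^{B^*\mu}$ as a convex-type average of the uniformly bounded operators from Lemma~\ref{l:BdPn}. The ``alternative shorter route'' you mention as a remark is precisely the argument the paper gives; your version merely adds the limiting justification explicitly.
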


\begin{proof}
The result follows immediately from Lemma \ref{l:BdPn}, since the operators $T_r^{B^*\mu}$ can be represented as averages of operators $P_n^{B^*\mu}$, 
\[
P_r^{B^*\mu} = \left\{
\begin{array}{ll} \displaystyle\sum_{n=0}^\infty (r^n - r^{n+1}) P_n^{B^*\mu} , \qquad & 0<r<1, \\
\displaystyle\sum_{n=1}^\infty (r^{-n} - r^{-n-1}) P_{-n}^{B^*\mu} , \qquad & r>1.  \end{array}\right.
\]
\end{proof}

Using uniform boundedness of the operators $\wt C_1 T_r^{ B^* \mu}$ (Lemma \ref{l:BdT_r}) and existence of non-tangential boundary values $T_\pm^{B^*\mu}f$ we can get the convergence of operators  $\wt C_1 T_r^{ B^* \mu}$ in the weak operator topology. 

%
%
%

\begin{prop}\label{d-TPlusMinus}
The operators $\wt C_1 T_\pm^{ B^* \mu}:\cH\subset L^2(\mu;E)\to L^2(W;\fD_*\oplus\fD)$ are bounded and
\begin{align*}
C_1 T_\pm^{B^* \mu}
=
\text{\rm w.o.t.-}\lim_{r\to 1^\mp} C_1 T_r^{ B^* \mu}.
\end{align*}
\end{prop}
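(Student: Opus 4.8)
The statement asserts two things: first, that the operators $\wt C_1 T_\pm^{B^*\mu}$ are bounded from $\cH$ into $L^2(W;\fD_*\oplus\fD)$, and second, that they are the weak-operator-topology limits of $\wt C_1 T_r^{B^*\mu}$ as $r\to 1^\mp$. The natural strategy is to combine the uniform boundedness from Lemma \ref{l:BdT_r} with the pointwise (non-tangential) convergence of the Cauchy transform, and then invoke the standard fact that a uniformly bounded net of operators which converges weakly on a dense set converges in the weak operator topology to a bounded limit.

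\textbf{Step 1: Uniform bound.} By Lemma \ref{l:BdT_r} the operators $\wt C_1 T_r^{B^*\mu}\colon\cH\to L^2(\fD_*\oplus\fD)$ are bounded with norm at most $2$, uniformly in $r\in(0,\infty)\setminus\{1\}$. (Recall $\|\wt C_1 f\|\ci{L^2(\fD_*\oplus\fD)} = \|C_1 f\|\ci{L^2(W;\fD_*\oplus\fD)}$, so this is exactly a uniform bound for $C_1 T_r^{B^*\mu}\colon\cH\to L^2(W;\fD_*\oplus\fD)$.)

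\textbf{Step 2: Identify the limit on a dense set and pass to w.o.t.} For $f\in L^1(\mu;E)$ the Cauchy transform $\cC[B^*f\mu](z)$ has non-tangential boundary values from inside and outside $\T$ a.e., which by definition are $T_+^{B^*\mu}f$ and $T_-^{B^*\mu}f$; in particular $T_r^{B^*\mu}f(z)\to T_\pm^{B^*\mu}f(z)$ for a.e.\ $z\in\T$ as $r\to 1^\mp$, and these boundary functions are measurable. To upgrade this to convergence in the sense required, fix $f\in\cH$ and $g\in L^2(W;\fD_*\oplus\fD)$, and consider $\langle C_1 T_r^{B^*\mu}f, g\rangle\ci{L^2(W)}$. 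One can either work directly with truncated (smooth) $f$ of the form $\sum_k h_k b_k$, $h_k\in C^1(\T)$ — for which the explicit representations in the proof of Lemma \ref{l:BdPn} give $\Phi^* f - z^n\Phi^*(\bar\xi^n f) = \pm C_1 P_{n}(B^*f\mu)$ and hence, summing geometric averages as in Lemma \ref{l:BdT_r}, an explicit identity $C_1 T_r^{B^*\mu}f = \Phi^* f - (\text{something})$ exhibiting convergence — or one can simply use the dominated/bounded convergence of the scalar pairings together with the a.e.\ pointwise limit and the uniform $L^2$ bound from Step 1. In either case, on the dense set of such $f$ the limit $\lim_{r\to 1^\mp}\langle C_1 T_r^{B^*\mu}f,g\rangle$ exists and equals $\langle C_1 T_\pm^{B^*\mu}f,g\rangle$; since the net is uniformly bounded (Step 1), a standard $\varepsilon/3$ argument extends the weak convergence to all $f\in\cH$ and shows the limit operator $C_1 T_\pm^{B^*\mu}$ is bounded with norm at most $2$.

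\textbf{Main obstacle.} The delicate point is justifying that $C_1 T_\pm^{B^*\mu}f$ genuinely lies in $L^2(W;\fD_*\oplus\fD)$ (equivalently that $\wt C_1 T_\pm^{B^*\mu}f\in L^2(\fD_*\oplus\fD)$) and that the pairings converge: the Cauchy transform $T_\pm^{B^*\mu}f$ itself need not be in any $L^2$ space — only after multiplication by the (unbounded-looking but $W^{1/2}$-regularized) factor $C_1$ does one gain integrability, and this is precisely what the uniform bound of Lemma \ref{l:BdT_r} buys, via Fatou's lemma applied to $\|\wt C_1 T_r^{B^*\mu}f\|^2$ as $r\to 1^\mp$. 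Making sure the a.e.\ pointwise boundary convergence of $T_r^{B^*\mu}f$ combines cleanly with the Fatou estimate and the uniform bound to give honest weak convergence (rather than merely weak-$*$ subsequential limits) is the step that requires care; everything else is routine functional analysis.
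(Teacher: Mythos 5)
Your Step 1 and your identification of the two essential ingredients (the uniform bound of Lemma \ref{l:BdT_r} and the a.e.\ non-tangential boundary convergence of the Cauchy transform) match the paper's proof. But the step you defer to your ``Main obstacle'' paragraph --- passing from ``$\wt C_1 T_r^{B^*\mu}f \to \wt C_1 T_\pm^{B^*\mu}f$ a.e.\ with $L^2$ norms uniformly bounded'' to honest weak convergence to that a.e.\ limit --- is the entire content of the proposition, and you name it without proving it. Neither of the two routes you sketch for the dense set closes it: dominated convergence of the pairings fails because the uniform bound controls integrals rather than a pointwise majorant (no maximal-function estimate is available here), and the ``explicit identity'' from the proof of Lemma \ref{l:BdPn} only controls the difference $C_1T_r^{B^*\mu}(hb)-h\,C_1T_r^{B^*\mu}b$ for $h\in C^1(\T)$, so it leaves you with the same convergence question for $C_1T_r^{B^*\mu}b$ --- i.e.\ that route is circular.

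The paper closes the gap directly, for every $f\in\cH$ and with no density argument: if the pairing against some $h\in L^2(\fD_*\oplus\fD)$ failed to converge, one extracts (using the uniform bound of Lemma \ref{l:BdT_r} and weak compactness of balls) a sequence $r_k\to 1$ along which $\wt C_1T_{r_k}^{B^*\mu}f$ converges weakly to some $g\ne \wt C_1T_+^{B^*\mu}f$; since the same sequence converges a.e.\ on $\T$ to $\wt C_1T_+^{B^*\mu}f$, the fact that an a.e.\ limit and a weak $L^2$ limit must coincide (\cite[Lemma 3.3]{LT09}, applied componentwise) yields a contradiction. Boundedness of $\wt C_1T_\pm^{B^*\mu}$ then follows from weak lower semicontinuity of the norm together with the uniform bound. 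If you replace your Step 2 by this subsequence argument (or by a proof of the standard fact that a.e.\ convergence plus a uniform $L^2$ bound implies weak convergence to the a.e.\ limit), your proof is complete; the density reduction and the $\varepsilon/3$ extension then become unnecessary.
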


\begin{proof}
We want to show that for any $f\in\cH\subset  L^2(\mu;E)$ 
\[
C_1 T_\pm^{B^*\mu} f = \text{w-}\lim_{r\to 1^{\mp}} C_1 T_r^{B^*\mu} f,  
\]
where the limit is in the weak topology of $L^2(W;\fD_*\oplus\fD)$. This is equivalent to 
\[
\wt C_1 T_\pm^{B^*\mu} f = \text{w-}\lim_{r\to 1^{\mp}} \wt C_1 T_r^{B^*\mu} f, 
\]
with the limit being in the weak topology of $L^2(\fD_*\oplus\fD)$. 

Let us prove this identity for $\wt C_1T^{B^*\mu}_+f$. Assume that for some $f\in L^2(\mu;E)$
\[
\wt C_1 T_+^{B^*\mu} f \ne \text{w-}\lim_{r\to 1^{-}} \wt C_1 T_r^{B^*\mu} f.
\]
Then for some $h\in L^2(\fD_*\oplus\fD)$
\begin{align}
\label{w-ne}
 \Bigl(\wt C_1 T_r^{B^*\mu} f, h\Bigr)\ci{L^2(\fD_*\oplus\fD)} \nrightarrow \Bigl(\wt C_1 T_+^{B^*\mu} f, h\Bigr)\ci{L^2(\fD_*\oplus\fD)} \qquad\text{as } r\to 1^-, 
\end{align}
so there exists a sequence $r_k\nearrow 1$ such that 
\[
\lim_{k\to \infty} \Bigl(\wt C_1 T_{r_k}^{B^*\mu} f, h\Bigr)\ci{L^2(\fD_*\oplus\fD)} \ne \Bigl(\wt C_1 T_+^{B^*\mu} f, h\Bigr)\ci{L^2(\fD_*\oplus\fD)}; 
\]
note that taking a subsequence we can assume without loss of generality that the limit in the left hand side exists. 

Taking a subsequence again, we can assume without loss of generality that \linebreak $\wt C_1 T_{r_k}^{B^*\mu} f \to g$ the weak topology, and \eqref{w-ne} implies that $g\ne \wt C_1 T_+^{B^*\mu} f$. 

The existence of non-tangential boundary values and the definition of $T^{B^*\mu}_+$ implies that $\wt C_1 T_{r_k}^{B^*\mu} f \to \wt C_1 T_+^{B^*\mu} f$ a.e.~on $\T$. But as \cite[Lemma 3.3]{LT09} asserts, if $f_n\to f$ a.e.~and $f_n\to g$ in the weak topology of $L^2$, then $f=g$, so we arrived at a contradiction. 

Note, that in \cite[Lemma 3.3]{LT09} everything was stated for scalar functions, but applying this scalar lemma componentwise we immediately get the same result for $L^2(\mu; E)$ with values in a separable Hilbert space.  
%
%
%
%
\end{proof}

\section{Adjoint Clark operator in Sz.-Nagy--\texorpdfstring{Foia\c s}{Foias} transcription}\label{ss-PhiStarSNF}
The main result of this section is Theorem \ref{t-repr3} below,  giving a formula for the adjoint Clark operator $\Phi^*$. 

Denote by $F$ the Cauchy transform of the matrix-valued measure $B^*B\mu$, 
\begin{align}
\label{F-02}
F(z) =\cC[B^*B\mu](z) = \int_\T \frac{1}{1-z\overline\xi} B^*(\xi) B(\xi) \dd\mu(\xi), \qquad z\in\D, 
\end{align}
and let us use the same symbol  for its non-tangential boundary values, which exist a.e. on $\T$. Using the operator $T_+^{B^*\mu}$ introduced in the previous section, we give the following formula for $\Phi^*$. 

\begin{theo}\label{t-repr3}
The adjoint Clark operator in Sz.-Nagy--Foia\c s transcription reduces to
\begin{align}
\label{Phi*-03}
\Phi^*f
=
\kf{0}{\Psi_2}  f
+
\kf{(\OID+\te\ci\Gamma \Gamma^*)D\ci{\Gamma^*}^{-1}F^{-1}}{\Delta\ci\Gamma D\ci{\Gamma}^{-1}
( \Gamma^* - \OID)}
T_+^{B^* \mu} f, \qquad f\in\cH, 
\end{align}
with $\Psi_2(z)= \wt \Psi_2(z) R(z)$, where  
\begin{align}
\label{Psi_2-alt}
\wt \Psi_2(z) & = \Delta\ci\Gamma D\ci{\Gamma}^{-1}
(\Gamma^*
+
(\OID - \Gamma^*) F(z))
\\ \notag
& =  \Delta\ci\Gamma D\ci{\Gamma}^{-1}(\bI -\Gamma^* \theta\ci\OZ(z)) F(z) \qquad \text{a.e.~on }\T,  
\end{align}
and $R$ is a measurable right inverse for the matrix-valued function $B$. 
\end{theo}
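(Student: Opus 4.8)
The plan is to feed the universal formula \eqref{e-repr} of Theorem \ref{t-repr} into the Sz.-Nagy--Foia\c s transcription using the explicit parametrizing operators of Lemma \ref{l:C_N-F}, and then to use the formula for $\theta\ci\Gamma$ from Theorem \ref{t-theta}, the relations $F=\I\ci{\fD}+F_1$ and $F^{-1}=\I\ci{\fD}-\theta\ci\OZ$ (Corollary \ref{c:theta_0}), and the commutation identities of Lemma \ref{l:G-D_G} to recognize the outcome as the right-hand side of \eqref{Phi*-03}. The regularized Cauchy integral in \eqref{e-repr} will be read as the boundary value $T_+^{B^*\mu}$ of Section \ref{s-SIO}, and the passage from functions of the form $hb$ to arbitrary $f\in\cH$ will be carried out exactly as in the proof of Theorem \ref{t-repr}, using Lemma \ref{l:bound-C} and Proposition \ref{d-TPlusMinus}.

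First I would fix $f=hb$ with $h\in C^1(\T)$ scalar and $b\in\Ran\bB$, and set $a:=\bB^*b$, so that $b(\xi)=B(\xi)a$ and $B^*(\xi)b(\xi)=M(\xi)a$. For $z\notin\T$ linearity gives $\int\ci\T\frac{h(\xi)-h(z)}{1-z\bar\xi}B^*(\xi)b(\xi)\,\dd\mu(\xi)=\cC[B^*hb\,\mu](z)-h(z)\cC[B^*b\,\mu](z)$, and the last term equals $h(z)F(z)a$ since $B^*b\,\mu=Ma\,\mu$. Taking non-tangential boundary values --- legitimate for the left-hand integral because $h\in C^1$ keeps the kernel bounded, and for the Cauchy transforms by Section \ref{s-SIO} --- turns \eqref{e-repr} into
\[
(\Phi^* hb)(z)=C_1(z)\,\bigl(T_+^{B^*\mu}(hb)\bigr)(z)+h(z)\bigl(C_*(z)-C_1(z)F(z)\bigr)a
\]
for a.e.\ $z\in\T$, with $C_1=C_*-zC$; this is the rigorous form of the heuristic identity \eqref{e-reprSzNFExplanations}.

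Next I would identify the coefficients. Since $F_1(0)=\OZ$, Theorem \ref{t-theta} gives $\theta\ci\Gamma(0)=-\Gamma$, hence $(\OID-\theta\ci\Gamma(0)\theta\ci\Gamma^*(0))^{-1/2}=D\ci{\Gamma^*}^{-1}$ and $(\OID-\theta\ci\Gamma^*(0)\theta\ci\Gamma(0))^{-1/2}=D\ci\Gamma^{-1}$, so Lemma \ref{l:C_N-F} produces closed forms for $C_*(z)$ and $zC(z)$. A short computation with these, with the expression for $\theta\ci\Gamma$ in Theorem \ref{t-theta}, and with the relations listed above then shows that $C_1$ is exactly the matrix coefficient $\kf{(\OID+\te\ci\Gamma\Gamma^*)D\ci{\Gamma^*}^{-1}F^{-1}}{\Delta\ci\Gamma D\ci\Gamma^{-1}(\Gamma^*-\OID)}$ of $T_+^{B^*\mu}$ in \eqref{Phi*-03} (the lower block follows at once from $\Gamma^*D\ci{\Gamma^*}^{-1}=D\ci\Gamma^{-1}\Gamma^*$, and the upper block amounts to the identity $C_1^{\,\mathrm{top}}=(\OID+\te\ci\Gamma\Gamma^*)D\ci{\Gamma^*}^{-1}F^{-1}$), while the upper block of $C_*-C_1F$ vanishes and its lower block equals $\wt\Psi_2$ in both of the forms \eqref{Psi_2-alt}. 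Thus the display above becomes $(\Phi^*hb)(z)=C_1(z)(T_+^{B^*\mu}(hb))(z)+\kf{0}{h(z)\wt\Psi_2(z)a}$.

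It then remains to replace $h(z)\wt\Psi_2(z)a$ by $\Psi_2(z)f(z)=\wt\Psi_2(z)R(z)f(z)$; since $f(z)=h(z)B(z)a$, this is equivalent to $\wt\Psi_2(\xi)$ annihilating $\ker B(\xi)$ for a.e.\ $\xi\in\T$, which at the same time shows that $\Psi_2=\wt\Psi_2R$ does not depend on the choice of right inverse $R$. \emph{I expect this to be the crux of the argument.} On $\{w=0\}$ the measure $\mu$ has no absolutely continuous part, so $\theta\ci\OZ(\xi)$ is unitary by Theorem \ref{t-AC}, whence $\Delta\ci\OZ(\xi)=\OZ$, $\Delta\ci\Gamma(\xi)=\OZ$ by Theorem \ref{t-Delta}, and $\wt\Psi_2(\xi)=\OZ$. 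On $\{w>0\}$, rewriting Theorem \ref{t-AC} via $\I-\theta\ci\OZ=F^{-1}$ gives $B^*Bw=F^*\Delta\ci\OZ^2F$, so $F(\xi)\ker B(\xi)\subseteq\ker\Delta\ci\OZ(\xi)$; combining this with the form $\wt\Psi_2=\Delta\ci\Gamma D\ci\Gamma^{-1}(\I-\Gamma^*\theta\ci\OZ)F$ of \eqref{Psi_2-alt} and with $\Delta\ci\Gamma^2=A^*\Delta\ci\OZ^2A$, $A=(\I-\Gamma^*\theta\ci\OZ)^{-1}D\ci\Gamma$ invertible a.e.\ (Theorem \ref{t-Delta}), one checks $\wt\Psi_2(\xi)x=0$ for $x\in\ker B(\xi)$. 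This establishes \eqref{Phi*-03} on the dense set of finite sums $\sum_k h_kb_k$ with $h_k\in C^1(\T)$ and $b_k=\bB e_k$. Finally, as $W\equiv\I$ here, Lemma \ref{l:bound-C} gives $C_1\in L^\infty$ and Proposition \ref{d-TPlusMinus} makes $f\mapsto C_1T_+^{B^*\mu}f$ a bounded operator on $\cH$; since $\Phi^*$ is unitary, $f\mapsto\kf{0}{\Psi_2}f=\Phi^*f-C_1T_+^{B^*\mu}f$ extends to a bounded operator, and the a.e.-convergence argument from the proof of Theorem \ref{t-repr} identifies it with the multiplication by $\kf{0}{\Psi_2}$, giving \eqref{Phi*-03} for all $f\in\cH$.
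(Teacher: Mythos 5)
Your proposal is correct and follows essentially the same route as the paper: feed the universal formula of Theorem \ref{t-repr} into the Sz.-Nagy--Foia\c s parametrizing operators of Lemma \ref{l:C_N-F}, pass to boundary values of the Cauchy transform, verify that the top entry of $C_*-C_1F$ vanishes and the bottom entry is $\wt\Psi_2$, and reduce $\Psi_2=\wt\Psi_2R$ to the inclusion $\ker B(\xi)\subset\ker\wt\Psi_2(\xi)$, which you establish via Theorems \ref{t-AC} and \ref{t-Delta} exactly as the paper does through the identity $\wt\Psi_2^*\wt\Psi_2=F^*\Delta\ci\OZ^2F=B^*Bw$ of Proposition \ref{p:Psi^*Psi}. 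The only cosmetic difference is that you justify the limit $r\to1^-$ pointwise a.e.\ by dominated convergence where the paper uses the weak-operator-topology convergence of the regularized operators (Proposition \ref{d-TPlusMinus}); both are adequate, and your density argument at the end matches the paper's.
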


\begin{rem*}
When $d=1$, this result reduces to \cite[Equation (4.5)]{LT15}.
\end{rem*}

\begin{rem}
\label{r:Psi_well-defined}
As one should expect, the matrix-valued function $\Psi_2$ does not depend on the choice of the right inverse $R$. To prove this it is sufficient to show that $\ker B(z) \subset\ker \wt\Psi_2(z)$ a.e., which follows from the proposition below. 
\end{rem}

\begin{prop}
\label{p:Psi^*Psi}
For $\wt\Psi_2$ defined above in \eqref{wtPsi_2} and $w$ being the density of $\mu\ti{ac} $ we have 
\begin{align}
\label{Psi^*Psi}
\wt\Psi_2(\xi)^*\wt\Psi_2(\xi)= F(\xi)^* \Delta\ci\OZ(\xi)^2 F(\xi) &=  B(\xi)^*B(\xi) w(\xi)\qquad && \mu\ti{ac}\text{-a.e.,}
\intertext{and so}
\label{Psi^*Psi-01}
\Psi_2(\xi)^*\Psi_2(\xi) & = w(\xi) \bI\ci{E(\xi)} && \mu\ti{ac}\text{-a.e.}
\end{align}
\end{prop}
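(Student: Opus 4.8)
The plan is to compute $\wt\Psi_2(\xi)^*\wt\Psi_2(\xi)$ starting from the second representation of $\wt\Psi_2$ in \eqref{Psi_2-alt}, i.e.\ from $\wt\Psi_2 = \Delta\ci\Gamma D\ci\Gamma^{-1}(\bI-\Gamma^*\te\ci\OZ)F$. Taking adjoints — using that $\Delta\ci\Gamma$, $D\ci\Gamma$ are self-adjoint and $D\ci\Gamma$ is invertible since $\|\Gamma\|<1$ — one gets
\[
\wt\Psi_2^*\wt\Psi_2 = F^*(\bI-\te\ci\OZ^*\Gamma)\,D\ci\Gamma^{-1}\Delta\ci\Gamma^2 D\ci\Gamma^{-1}\,(\bI-\Gamma^*\te\ci\OZ)\,F .
\]
The key algebraic step is to substitute Theorem \ref{t-Delta}, which gives $D\ci\Gamma^{-1}\Delta\ci\Gamma^2 D\ci\Gamma^{-1} = (\bI-\te\ci\OZ^*\Gamma)^{-1}\Delta\ci\OZ^2(\bI-\Gamma^*\te\ci\OZ)^{-1}$; then the two outer linear-fractional factors cancel and one is left with $\wt\Psi_2^*\wt\Psi_2 = F^*\Delta\ci\OZ^2 F$ a.e.\ on $\T$. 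Along the way I would record that all inverses occurring here exist a.e.\ on $\T$: on $\D$ this is clear because $\te\ci\OZ(z)$ and $\Gamma$ are strict contractions, and on $\T$ one argues that $\det(\bI-\Gamma^*\te\ci\OZ)$ and $\det(\bI-\te\ci\OZ^*\Gamma)$ are nonzero bounded analytic functions.

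Next I would identify $F^*\Delta\ci\OZ^2 F$ with $B^*Bw$. From Corollary \ref{c:theta_0} together with $F = \cC[B^*B\mu] = \bI + F_1$ (valid because $\int_\T B^*B\,\dd\mu = \bI$, the $b_k$ being orthonormal) one solves $\te\ci\OZ = F_1(\bI+F_1)^{-1}$ to obtain $F = (\bI-\te\ci\OZ)^{-1}$ on $\D$, hence a.e.\ on $\T$ by passing to non-tangential boundary values (with $\bI-\te\ci\OZ$ invertible a.e.\ by the $\det$ argument already used in the proof of Theorem \ref{t-AC}). Substituting, $F^*\Delta\ci\OZ^2 F = (\bI-\te\ci\OZ^*)^{-1}\Delta\ci\OZ^2(\bI-\te\ci\OZ)^{-1}$ a.e.\ on $\T$. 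On the other hand, \eqref{e-NOT} reads $(\bI-\te\ci\OZ^*)\,B^*Bw\,(\bI-\te\ci\OZ) = \Delta\ci\OZ^2$ a.e.\ on $\T$; solving for $B^*Bw$ gives precisely the same expression. This yields $\wt\Psi_2^*\wt\Psi_2 = F^*\Delta\ci\OZ^2 F = B^*Bw$ a.e.\ on $\T$, and in particular $\mu\ti{ac}$-a.e., which is \eqref{Psi^*Psi}.

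For \eqref{Psi^*Psi-01} I would substitute $\Psi_2 = \wt\Psi_2 R$ with $R$ a measurable right inverse of $B$, so $B(\xi)R(\xi)=\bI\ci{E(\xi)}$ wherever defined (in particular $\mu$-a.e., hence $\mu\ti{ac}$-a.e.); since $w$ is scalar,
\[
\Psi_2^*\Psi_2 = R^*\,\wt\Psi_2^*\wt\Psi_2\,R = w\,R^*B^*B R = w\,(BR)^*(BR) = w\,\bI\ci{E(\xi)} \qquad \mu\ti{ac}\text{-a.e.}
\]
I would also note that \eqref{Psi^*Psi}, valid a.e.\ on all of $\T$, immediately gives $\ker B(\xi)\subset\ker\wt\Psi_2(\xi)$ a.e.\ (if $B(\xi)x=0$ then $\|\wt\Psi_2(\xi)x\|^2 = w(\xi)\|B(\xi)x\|^2 = 0$), which is the input needed for Remark \ref{r:Psi_well-defined}. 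I expect the only genuine obstacle to be the two ``boundary value'' subtleties — that $F=(\bI-\te\ci\OZ)^{-1}$ survives from $\D$ to a.e.\ point of $\T$, and that $\bI-\te\ci\OZ$ and the $\Gamma$-twisted factors are invertible a.e.\ on $\T$ — but these are of exactly the same nature as the arguments already carried out in the proof of Theorem \ref{t-AC}; everything else is bookkeeping with the linear-fractional formulas of Theorems \ref{t-LFT} and \ref{t-Delta}.
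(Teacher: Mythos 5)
Your proposal is correct and follows essentially the same route as the paper: take adjoints of the second form of $\wt\Psi_2$ in \eqref{Psi_2-alt}, cancel the $\Gamma$-twisted factors via Theorem \ref{t-Delta} to reach $F^*\Delta\ci\OZ^2 F$, identify this with $B^*Bw$ using $F=(\bI-\te\ci\OZ)^{-1}$ and \eqref{e-NOT}, and deduce \eqref{Psi^*Psi-01} from $\Psi_2=\wt\Psi_2 R$. The only cosmetic difference is that the paper first treats $\Gamma=\OZ$ and then reduces the general case to it, whereas you handle the general case directly; the key lemmas invoked are identical.
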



\begin{proof}
Since $\Psi_2 =\wt \Psi_2 R$, \eqref{Psi^*Psi-01} follows immediately from \eqref{Psi^*Psi}. 

To prove \eqref{Psi^*Psi},  
consider first the case $\Gamma = \OZ$. In this case $\wt\Psi = \Delta\ci\OZ F$, so
\begin{align}
\notag
\wt \Psi^*_2 \wt\Psi_2 & = F^* \Delta\ci\OZ^2 F  = (\bI - \theta\ci\OZ^*)^{-1} \Delta\ci\OZ^2 (\bI - \theta\ci\OZ)^{-1} && 
\\
\label{Psi^*Psi-02}
& = B^*Bw . && \text{by \eqref{e-NOT} }
\end{align}

Consider now the case of general $\Gamma$. We get 
\begin{align*}
\wt \Psi_2^* \wt\Psi_2 & = F^*(\bI - \theta\ci\OZ^*\Gamma) D\ci{\Gamma}^{-1}\Delta\ci\Gamma^2 D\ci{\Gamma}^{-1}(\bI -\Gamma^* \theta\ci\OZ) F && 
\\
& = F^* \Delta\ci\OZ^2 F  && \text{by Theorem \ref{t-Delta}} 
\\
& = B^*Bw  && \text{by \eqref{Psi^*Psi-02}. }
\end{align*}
\end{proof}

%

\subsection{A preliminary formula}
\label{s:Phi^*-prelim}

We start proving Theorem \ref{t-repr3} by first proving this preliminary result, that holds for any transcription of the model. Below the matrix-valued functions $C_*$ and $C$ are from Theorem \ref{t-repr}, 
and $C_1(z):= C_*(z)-zC(z)$. 

\begin{prop}\label{p:repr2}
The adjoint  Clark operator represented for $f\in \cH\subset L^2(\mu;E)$ by
\begin{align}
\label{Phi^*NF-02}
(\Phi^* f)(z) = 
C_1(z)
(T_\pm^{B^* \mu} f)(z)
+\Psi_\pm(z) f(z)
,\quad z\in \T,
\end{align}
where the matrix-functions $\Psi_\pm$, $\Psi_\pm(z): E(z) \to \C^{2d}=\fD_*\oplus\fD$ are defined via the identities 
\begin{align}
\label{Psi}
\Psi_\pm(z)b(z) :=C_*(z){\bB}^*b-C_1(z)(T_\pm^{B^* \mu} b)(z), \qquad b\in \Ran \bB;
\end{align}
here two choices of sign (the same sign for all terms) gives two different representation formulas. 
\end{prop}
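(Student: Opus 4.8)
The plan is to derive \eqref{Phi^*NF-02} from the universal representation Theorem \ref{t-repr} by converting the regularized Cauchy kernel $\tfrac{h(\xi)-h(z)}{1-z\bar\xi}$ into honest boundary values of a Cauchy transform and then peeling off the non-singular (multiplication) part. Fix $b=\bB\beta\in\Ran\bB$ and a scalar $h\in C^1(\T)$ (note $hb\in\cH$). For $\zeta\notin\T$ I would split
\[
\cC[B^*(hb)\mu](\zeta)=h(z)\,\cC[B^*b\mu](\zeta)+\int_\T\frac{h(\xi)-h(z)}{1-\zeta\bar\xi}\,B^*(\xi)b(\xi)\,\dd\mu(\xi).
\]
Writing the kernel in the last integral as $\tfrac{h(\xi)-h(z)}{\xi-z}\cdot\tfrac{\xi-z}{1-\zeta\bar\xi}$, using $|1-\zeta\bar\xi|=|\xi-\zeta|$ and the elementary Stolz-angle bound $|\xi-\zeta|\gtrsim|\xi-z|$ for non-tangential $\zeta\to z$, together with $|h(\xi)-h(z)|\le\|h'\|_\infty|\xi-z|$, dominated convergence lets me pass $\zeta\to z$ non-tangentially (from inside and from outside) and obtain, for a.e.\ $z\in\T$ and for both signs simultaneously,
\[
\int_\T\frac{h(\xi)-h(z)}{1-z\bar\xi}\,B^*(\xi)b(\xi)\,\dd\mu(\xi)=(T^{B^*\mu}_\pm(hb))(z)-h(z)\,(T^{B^*\mu}_\pm b)(z);
\]
this is the matrix analogue of the scalar argument in \cite{LT09,LT15}. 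Substituting into Theorem \ref{t-repr}, writing $f=hb$ (so $f(z)=h(z)b(z)$), and extending by linearity, I get on the dense set $\cD$ of finite sums $f=\sum_k h_k b_k$, $h_k\in C^1(\T)$, the identity $(\Phi^*f)(z)=C_1(z)(T^{B^*\mu}_\pm f)(z)+(\Lambda_\pm f)(z)$ with $(\Lambda_\pm f)(z)=\sum_k h_k(z)\bigl[C_*(z)\bB^*b_k-C_1(z)(T^{B^*\mu}_\pm b_k)(z)\bigr]$. Here $\cD$ is dense in $\cH$ because $\cH=\overline{\spa}\{U^k\Ran\bB:k\in\Z\}$ and $U^k b_j=\xi^k b_j\in\cD$.

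Next I would show $\Lambda_\pm$ is a multiplication operator. Since on $\cD$ we have $\Lambda_\pm=\Phi^*-C_1T^{B^*\mu}_\pm$, with $\Phi^*$ unitary onto $\cK_\theta$ and $C_1T^{B^*\mu}_\pm$ bounded by Proposition \ref{d-TPlusMinus}, $\Lambda_\pm$ extends to a bounded operator $\cH\to L^2(W;\fD_*\oplus\fD)$ and the displayed identity persists on all of $\cH$. Using \eqref{e-PhiUn-01} and the kernel identity $\tfrac{\xi}{1-\zeta\bar\xi}=\xi+\tfrac{\zeta}{1-\zeta\bar\xi}$ one computes $(\Phi^*M_\xi f)(z)=z(\Phi^*f)(z)+C_1(z)\bB^*M_\xi f$ and $(T^{B^*\mu}_\pm M_\xi f)(z)=z(T^{B^*\mu}_\pm f)(z)+\bB^*M_\xi f$, so the two extra terms cancel and $\Lambda_\pm M_\xi=M_z\Lambda_\pm$; symmetrically, using \eqref{e-mxi} and $\tfrac{\bar\xi}{1-\zeta\bar\xi}=\tfrac1\zeta\bigl(\tfrac1{1-\zeta\bar\xi}-1\bigr)$, one gets $\Lambda_\pm M_{\bar\xi}=M_{\bar z}\Lambda_\pm$. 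Hence $\Lambda_\pm$ commutes with multiplication by every trigonometric polynomial, and a routine approximation (Fej\'er means plus dominated convergence) upgrades this to commutation with multiplication by every bounded scalar function. A bounded operator between the direct integrals $\cH$ and $L^2(W;\fD_*\oplus\fD)$ that commutes with all such scalar multiplications is a measurable field of operators, so $(\Lambda_\pm f)(z)=\Psi_\pm(z)f(z)$ for a measurable $\Psi_\pm(z)\colon E(z)\to\fD_*\oplus\fD$. Evaluating $\Lambda_\pm$ at $b=\bB\beta$ gives $\Psi_\pm(z)b(z)=C_*(z)\bB^*b-C_1(z)(T^{B^*\mu}_\pm b)(z)$, and since $\{b_k(z)\}$ spans $E(z)$ for a.e.\ $z$ by Lemma \ref{l:cycl-02}, these relations are consistent and pin $\Psi_\pm$ down uniquely. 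Combined with the identity on $\cH$ this is precisely \eqref{Phi^*NF-02}.

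I expect the well-definedness of $\Psi_\pm$ to be the main obstacle: a priori the right-hand side of the defining relation uses the function $b$ rather than only the vector $b(z)\in E(z)$, and the $b_k(z)$ are genuinely linearly dependent wherever $\dim E(z)<d$. The device above --- recognizing that $\Lambda_\pm=\Phi^*-C_1T^{B^*\mu}_\pm$ is a \emph{local} operator because the two singular operators on the right carry ``the same'' non-local part, made precise through the intertwining relations \eqref{e-PhiUn-01} and \eqref{e-mxi} --- is the crux. By comparison, the boundary-value identity of the first paragraph is technical but standard, and the intertwining computation is short once the elementary kernel identities are written down.
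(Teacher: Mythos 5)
Your proof is correct and follows essentially the same route as the paper's: start from Theorem \ref{t-repr}, convert the difference-kernel integral into $(T_\pm^{B^*\mu}(hb))(z)-h(z)(T_\pm^{B^*\mu}b)(z)$, and then recognize $\Phi^*-C_1T_\pm^{B^*\mu}$ as a multiplication operator because it is bounded and intertwines $M_\xi$ with $M_z$ on the dense set of sums $\sum_k h_k b_k$. The only (harmless) deviation is in the middle step: you pass to non-tangential boundary values by dominated convergence on the difference kernel, while the paper goes through the radial regularizations $T_r^{B^*\mu}$, uniform convergence in $z$, and the weak-operator-topology limit of Proposition \ref{d-TPlusMinus} --- which you still need, and correctly invoke, for the boundedness of $C_1T_\pm^{B^*\mu}$.
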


%

\begin{rem*}
When $d=1$ and $b\equiv 1$ this alternative representation formula reduces to a formula that occurs in the proof of \cite[Theorem 4.7]{LT15}.
\end{rem*}

\begin{rem*}
It is clear that relations \eqref{Psi} with $b=b_k$, $k=1,2,\ldots, d$, completely defines 
the matrix-valued function $\Psi$.  However, it is not immediately clear why such function $\Psi$ exists; the existence of $\Psi$ will be shown in the proof. 

Recalling the definition \eqref{F-02} of the function $F$, 
we can see that $\Psi (z) b_k(z)$ can be given as the (non-tangential) boundary values of the vector-valued  function 
\begin{align}
\label{Psi-01}
C_*(z) e_k - C_1(z) F(z) e_k, \qquad z\in\D,  
\end{align}
where $e_1, e_2, \ldots, e_d$ is the standard orthonormal basis in $\C^d$. 
\end{rem*}
%

%
%

%

\begin{proof}[Proof of Proposition \ref{p:repr2}]
Let us first show the result for functions of the form $f=hb\in L^2(\mu; E)$, where $b\in \Ran\bB$ and $h$ is a scalar function. We want to show that  
\begin{align}\label{t-reprL2}
(\Phi^* hb)(z)
=
C_1(z)
(T_\pm^{B^* \mu} hb )(z)
+h(z)\psi_b^\pm(z)
,\quad z\in \T,
\end{align}
where
\[
\psi_b^\pm(z)
:=
C_*(z){\bB}^*b
-C_1(z)
(T_\pm^{B^* \mu} b)(z).
 \]

First note that \eqref{e-repr} implies that for $b\in \Ran \bB$ 
\begin{align*}
\Phi^* b (z) = C_*(z) \bB^* b.
\end{align*}
Observe that for (scalar) $h\in C^1$ we have uniform on $z\in\T$ convergence as $r\to 1^\mp$:
\begin{align}
\label{uniform_conv-01}
\int\ci{\T}
 \frac{h(\xi)-h(z)}{1-rz\bar\xi}B^*(\xi)b(\xi)
d\mu(\xi)
&\rightrightarrows 
 \int\ci{\T}
 \frac{h(\xi)-h(z)}{1-z\bar\xi}B^*(\xi)b(\xi)
d\mu(\xi).
\end{align}
Multiplying both sides by $C_1(z)$ we get in
the left hand side exactly $C_1(z) (T_r^{B^* \mu} hb)(z) - h(z) C_1(z) (T_r^{B^* \mu} b)(z)$, 
and in the right hand side  the part with the integral  in the representation \eqref{e-repr}. 

Recall that the model space $\cK\ci{\theta_\Gamma}$ is a subspace of a weighted space $L^2(W, \fD_*\oplus\fD)$. 
Uniform convergence in \eqref{uniform_conv-01} implies the convergence in $L^2(\fD_*\oplus\fD)$, and by Lemma \ref{l:bound-C} the multiplication by $C_*$ and $C_1$ are bounded operators $L^2(\fD)\to L^2(W;\fD_*\oplus\fD)$.  Thus (because $h$ is bounded) 
\[
h C_*\bB^* b + C_1 T_r^{B^* \mu} hb - h C_1T_r^{B^* \mu} b \to \Phi^* hb
\]
as $r\to 1^\mp$ in the norm of $L^2(W;\fD_*\oplus\fD)$. By Proposition \ref{d-TPlusMinus} the operators $C_1 T_r^{B^*\mu} \to C_1 T_\pm^{B^*\mu}$ in weak operator topology as $r\to 1^\mp$, so
\begin{align}
\label{form-repr-01}
\Phi^* hb = C_1 T_\pm^{B^* \mu} hb + h C_*\bB^* b - h C_1T_\pm^{B^* \mu} b ,
\end{align}
which immediately implies \eqref{t-reprL2}. Thus, \eqref{t-reprL2} is proved for $h\in C^1(\T)$.

To get \eqref{form-repr-01}, and so \eqref{t-reprL2} for for general $h$ such that $hb\in L^2(\mu;E)$ (recall that $b\in\Ran \bB$) we use the standard approximation argument: the operators $\Phi^*, C_1 T_\pm^{B^*\mu}:\cH\to 
 L^2(W;\fD_*\oplus \fD)$ are bounded, and therefore for a fixed $b\in\Ran\bB$ the operators $hb \mapsto h\psi_b^\pm$ (which are defined initially on a submanifold of $\cH$ consisting of functions of form $hb$, $h\in C^1(\T)$) are bounded (as a difference of two bounded operators). Approximating  in $L^2(\mu;E)$ the function $hb$ by functions $h_n b$, $h_n\in C^1(\T)$ we get  \eqref{form-repr-01} and \eqref{t-reprL2} for general $h$. 

Let us now proof existence of $\Psi$. 
Consider the (bounded) linear operator  $\Phi^* - C_1 T^{B^*\mu}$. We know that for $f=hb\in L^2(\mu;E)$ with $b\in\Ran \bB$ and scalar $h$ 
\[
(\Phi^* - C_1 T_\pm^{B^*\mu})hb = h\psi_b^\pm, 
\]
so on functions $f=hb$ the operators $\Phi^* - C_1 T_\pm^{B^*\mu}$ intertwine the multiplication operators $M_\xi$ and $M_z$. Since linear combinations of functions $h_kb_k$ are dense in $\cH$, we conclude that the operators $\Phi^* - C_1 T_\pm^{B^*\mu}$ intertwine $M_\xi$ and $M_z$ on all $\cH$, and so these operators are  the multiplications by some matrix functions $\Psi_\pm$. 

Using  \eqref{form-repr-01} with $h=1$ we can see that 
\[
\Psi_\pm b = \Phi^* b -C_1 T^{B^*\mu}_\pm b = \bC_* B^* b - C_1 T^{B^*\mu}_\pm b, 
\]
so $\Psi_\pm$ are defined exactly as stated in the proposition. 
%
%
%
\end{proof}

\subsection{Some calculations}\label{ss-repr3}

Let us start with writing more detailed formulas for the matrix functions $C_*$ and $C_1$ from  Proposition \ref{p:repr2}. 

\begin{lem}\label{l-C1simple}
We have
\begin{align*}
C_*(z)
&=
\kf{ \OID+ {\te}\ci\Gamma (z)\Gamma^\ast}{ \Delta\ci\Gamma (z)\Gamma^\ast} 
D\ci{\Gamma^*}^{-1},
\qquad
C_1(z)
&=
\kf{\bI}{\OZ}D\ci{\Gamma^*}^{-1}(\OID - \Gamma)+\kf{\te\ci\Gamma(z)}{\Delta\ci\Gamma(z)}D\ci{\Gamma}^{-1}(\Gamma^*-\OID).
\end{align*}
\end{lem}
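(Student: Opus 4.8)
The plan is to specialize Lemma~\ref{l:C_N-F} to the characteristic function $\theta=\theta\ci\Gamma$ of $T=T\ci\Gamma$ computed in Theorem~\ref{t-theta} with the coordinate operators $V=\bB^*U$, $V_*=\bB^*$ (these are exactly the coordinate operators relevant to the Clark model, so the $\bC$, $\bC_*$ produced by Lemma~\ref{l:C_N-F} are the parametrizing operators of Theorem~\ref{t-repr}, and the associated functions $C$, $C_*$ are the ones in that theorem). After substituting, the only work left is algebraic simplification using the value of $\theta\ci\Gamma$ at the origin together with the commutation relations of Lemma~\ref{l:G-D_G}.

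First I would record that $\theta\ci\Gamma(0)=-\Gamma$. Indeed, by Theorem~\ref{t-theta} we have $\theta\ci\Gamma(z)=-\Gamma+D\ci{\Gamma^*}F_1(z)\bigl(\OID-(\Gamma^*-\OID)F_1(z)\bigr)^{-1}D\ci\Gamma$ with $F_1=\cC_1[M\mu]$, and $\cC_1\tau(0)=0$ for any $\tau$, so $F_1(0)=0$ and the second summand vanishes at $z=0$. Hence $\theta\ci\Gamma(0)\theta\ci\Gamma(0)^*=\Gamma\Gamma^*$ and $\theta\ci\Gamma(0)^*\theta\ci\Gamma(0)=\Gamma^*\Gamma$, so $\bigl(\OID-\theta\ci\Gamma(0)\theta\ci\Gamma(0)^*\bigr)^{-1/2}=D\ci{\Gamma^*}^{-1}$ and $\bigl(\OID-\theta\ci\Gamma(0)^*\theta\ci\Gamma(0)\bigr)^{-1/2}=D\ci\Gamma^{-1}$. (The factor $z^{-1}$ in \eqref{C_N-F} is harmless, since $\theta\ci\Gamma(z)-\theta\ci\Gamma(0)$ vanishes at the origin; and on $\T$, where $C$ and $C_*$ are ultimately used, $z=0$ plays no role anyway.)

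Substituting $\theta\ci\Gamma(0)=-\Gamma$ into \eqref{C*_N-F} and \eqref{C_N-F} then gives at once
\[
C_*(z)=\kf{\OID+\theta\ci\Gamma(z)\Gamma^*}{\Delta\ci\Gamma(z)\Gamma^*}D\ci{\Gamma^*}^{-1},
\qquad
zC(z)=\kf{\theta\ci\Gamma(z)+\Gamma}{\Delta\ci\Gamma(z)}D\ci\Gamma^{-1},
\]
the first of which is already the asserted formula for $C_*$. For $C_1=C_*-zC$ one subtracts the two columns entrywise. The bottom entry is $\Delta\ci\Gamma\bigl(\Gamma^*D\ci{\Gamma^*}^{-1}-D\ci\Gamma^{-1}\bigr)$, which equals $\Delta\ci\Gamma D\ci\Gamma^{-1}(\Gamma^*-\OID)$ by the identity $\Gamma^*D\ci{\Gamma^*}^{-1}=D\ci\Gamma^{-1}\Gamma^*$ of Lemma~\ref{l:G-D_G}. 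The top entry is $D\ci{\Gamma^*}^{-1}+\theta\ci\Gamma\Gamma^*D\ci{\Gamma^*}^{-1}-\theta\ci\Gamma D\ci\Gamma^{-1}-\Gamma D\ci\Gamma^{-1}$; applying Lemma~\ref{l:G-D_G} in the forms $\Gamma^*D\ci{\Gamma^*}^{-1}=D\ci\Gamma^{-1}\Gamma^*$ and $\Gamma D\ci\Gamma^{-1}=D\ci{\Gamma^*}^{-1}\Gamma$ rewrites it as $D\ci{\Gamma^*}^{-1}(\OID-\Gamma)+\theta\ci\Gamma D\ci\Gamma^{-1}(\Gamma^*-\OID)$. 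Assembling the two entries yields
\[
C_1(z)=\kf{\bI}{\OZ}D\ci{\Gamma^*}^{-1}(\OID-\Gamma)+\kf{\theta\ci\Gamma(z)}{\Delta\ci\Gamma(z)}D\ci\Gamma^{-1}(\Gamma^*-\OID),
\]
which is the claim. There is no genuine obstacle here; the only thing that requires care is the bookkeeping with the non-commuting defect operators $D\ci\Gamma$ and $D\ci{\Gamma^*}$, and this is handled completely by the two intertwining identities of Lemma~\ref{l:G-D_G}.
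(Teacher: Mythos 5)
Your proposal is correct and follows essentially the same route as the paper: specialize the Sz.-Nagy--Foia\c{s} formulas \eqref{C*_N-F} and \eqref{C_N-F} using $\theta\ci\Gamma(0)=-\Gamma$, then simplify $C_1=C_*-zC$ with the intertwining identities of Lemma~\ref{l:G-D_G}. The only (harmless) difference is that you explicitly justify $\theta\ci\Gamma(0)=-\Gamma$ from $F_1(0)=0$, whereas the paper simply invokes that identity.
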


\begin{proof}
The formula for $C_*(z)$ is just \eqref{C*_N-F} and the identity $\theta\ci\Gamma(0)=-\Gamma$. 
Similarly, equation \eqref{C_N-F} gives us
\[
C(z)
=
\kf{z^{-1}(\theta\ci\Gamma (z)+\Gamma)}{z^{-1}\Delta\ci\Gamma(z)}D\ci{\Gamma}^{-1}.
\]
Substituting these expressions into $C_1(z) = C_*(z) - z C(z)$ and applying the commutation relations from Lemma \ref{l:G-D_G} we see
\begin{align*}
C_1(z)
&
=
\kf{D\ci{\Gamma^*}^{-1}+ \theta\ci\Gamma \Gamma^* D\ci{\Gamma^*}^{-1}-\theta\ci\Gamma  D\ci{\Gamma}^{-1}-\Gamma D\ci{\Gamma}^{-1}}{\Delta\ci\Gamma \Gamma^*D\ci{\Gamma^*}^{-1}-\Delta\ci\Gamma D\ci{\Gamma}^{-1}}
\\
& =
\kf{D\ci{\Gamma^*}^{-1}+ \theta\ci\Gamma  D\ci{\Gamma}^{-1}\Gamma^*-\theta\ci\Gamma  D\ci{\Gamma}^{-1}- D\ci{\Gamma^*}^{-1}\Gamma}{\Delta\ci\Gamma D\ci{\Gamma}^{-1}\Gamma^*-\Delta\ci\Gamma D\ci{\Gamma}^{-1}}
\\&
=
\kf{D\ci{\Gamma^*}^{-1}(\OID-\Gamma)+ \theta\ci\Gamma  D\ci{\Gamma}^{-1}(\Gamma^*-\OID)}{\Delta\ci\Gamma D\ci{\Gamma}^{-1}(\Gamma^*-\OID)}
\\ & =
\kf{\bI}{\OZ}D\ci{\Gamma^*}^{-1}(\OID-\Gamma)
+
\kf{\theta\ci\Gamma }{\Delta\ci\Gamma} D\ci{\Gamma}^{-1}(\Gamma^*-\OID)
,
\end{align*}
and the second statement in the lemma is verified.
\end{proof}

Recall that $F(z)$, $z\in\D$ is the matrix-valued Cauchy transform of the measure  $B^*B\mu$, see  \eqref{F-02}, and that for $z\in\T$ the symbol $F(z)$ denotes the non-tangential boundary values of $F$. We need the following simple relations between $F$ and $\theta\ci\OZ$. 

\begin{lem}
\label{l:theta-F}
For all $z\in\D$ and a.e.~on $\T$
\begin{align*}
F(z)= ( \bI - \theta\ci\OZ(z) )^{-1};
\end{align*}
note that for all $z\in\D$ the matrix $\theta\ci\OZ(z)$ is a strict contraction, so $\bI -\theta\ci\OZ(z) $ is invertible.
\end{lem}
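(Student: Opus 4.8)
The plan is to reduce everything to the already-computed formula for $\theta\ci\OZ$ from Corollary \ref{c:theta_0}, via the simple observation that $F$ differs from the function $F_1$ of \eqref{MatrCauchy-01} only by the identity. First I would use inside the integral \eqref{F-02} the elementary identity $\frac{1}{1-z\bar\xi}=1+\frac{z\bar\xi}{1-z\bar\xi}$. Since the integrand is $M(\xi)=B^*(\xi)B(\xi)$ and, as noted in the proof of Corollary \ref{c:theta_0}, $\int_\T M\,\dd\mu=\bI\ci{\fD}$ (the columns $b_1,\dots,b_d$ form an orthonormal system), this splits $F$ as
\[
F(z)=\int_\T M\,\dd\mu+\int_\T\frac{z\bar\xi}{1-z\bar\xi}\,M(\xi)\,\dd\mu(\xi)=\bI\ci{\fD}+F_1(z),\qquad z\in\D.
\]

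Next I would plug this into \eqref{theta_0-01}. From $\theta\ci\OZ=F_1(\bI+F_1)^{-1}=(\bI+F_1)^{-1}F_1$ and $F=\bI+F_1$, a one-line computation using only that $F_1$ commutes with functions of itself gives
$F(\bI-\theta\ci\OZ)=(\bI+F_1)-(\bI+F_1)F_1(\bI+F_1)^{-1}=(\bI+F_1)-F_1=\bI$,
and symmetrically $(\bI-\theta\ci\OZ)F=\bI$. Hence $F(z)$ is invertible on $\D$ with $F(z)=(\bI-\theta\ci\OZ(z))^{-1}$; the invertibility of $\bI-\theta\ci\OZ(z)$ on $\D$ is in any case automatic, as $\theta\ci\OZ(z)$ is a strict contraction for $z\in\D$.

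Finally I would pass to the boundary by taking non-tangential limits in the two-sided identity $F(z)(\bI-\theta\ci\OZ(z))=\bI=(\bI-\theta\ci\OZ(z))F(z)$: the Cauchy transform of the finite matrix-valued measure $B^*B\mu$ has non-tangential boundary values a.e.\ on $\T$ (the classical fact recalled at the start of Section \ref{s-SIO}), and $\theta\ci\OZ\in H^\infty$ has non-tangential boundary values a.e., so the identity survives the limit and yields $F(\xi)=(\bI-\theta\ci\OZ(\xi))^{-1}$ for a.e.\ $\xi\in\T$. There is essentially no real obstacle here; the only point needing a word of justification is the a.e.\ existence of the boundary values of the Cauchy transform, which is standard, while the substance of the argument is the bookkeeping identity $F=\bI+F_1$ combined with Corollary \ref{c:theta_0}.
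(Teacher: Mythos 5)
Your proposal is correct and follows essentially the same route as the paper: the paper's proof likewise rests on the identity $F=\bI+F_1$ (a consequence of $\int_\T B^*B\,\dd\mu=\bI$) combined with \eqref{theta_0-01}, writing $\theta\ci\OZ=(F-\bI)F^{-1}$ and solving for $F$, which is the same algebra as your direct verification of $F(\bI-\theta\ci\OZ)=\bI$. Your explicit passage to non-tangential boundary values is a detail the paper leaves implicit, but it is handled exactly as you describe.
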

\begin{proof}
Recall that the function $F_1$ was defined by $F_1(z)=\cC_1[B^*B\mu](z)$. Since $F(z)=\bI+ F_1(z)$, we get from \eqref{theta_0-01} that 
\[
\theta\ci\OZ (z) = F_1(z) (\bI + F_1(z))^{-1} = \bigl( F(z) - \bI \bigr) F(z)^{-1}. 
\]
Solving for $F$ we get the conclusion of the lemma. 
\end{proof}

\subsection{Proof of Theorem \ref{t-repr3}}
Let us first prove the second identity in \eqref{Psi_2-alt}. Using the identity $F=(\bI - \theta\ci\OZ)^{-1}$ we compute
\begin{align*}
\Gamma^* + (\bI -\Gamma^* ) F = (\Gamma^* (\bI -\theta\ci\OZ ) + \bI -\Gamma^*) F = (\bI - \Gamma^* \theta\ci\OZ)F,  
\end{align*}
which is exactly what we need. 

Let us now prove that $\Psi$ from Proposition \ref{p:repr2} if given by $\Psi =\kf{\OZ}{\Psi_2}$ with $\Psi_2$ defined above in Theorem \ref{t-repr3}. Since $R(z) b_k(z)= e_k$, it is sufficient to show that $\Psi =\kf{\OZ}{\Psi_2}$ and that 

\begin{align}
\label{Psi_2}
\Psi_2(z) b_k(z) =\Delta\ci\Gamma D\ci{\Gamma}^{-1}
(\Gamma^*
+
(\OID - \Gamma^*)
F(z))e_k, \qquad k=1, 2, \ldots, d. 
\end{align}
%
%

Using the formulas for $C_*$ and $C_1$ provided in Lemma \ref{l-C1simple} we get from \eqref{Psi-01} 
\begin{align*}
\Psi(z) b_k(z) & = C_*(z) e_k - C_1(z) F(z) e_k 
\\
&= 
\kf{
(\OID+{\te}\ci\Gamma \Gamma^\ast)D\ci{\Gamma^*}^{-1}
-
[D\ci{\Gamma^*}^{-1}(\OID- \Gamma)+ {\te}\ci\Gamma D\ci{\Gamma}^{-1} (\Gamma^\ast-\OID)]
F
}
{
\Delta\ci\Gamma \Gamma^* D\ci{\Gamma^*}^{-1}
-\Delta\ci\Gamma D\ci{\Gamma}^{-1}
( \Gamma^*-\OID)
F
}
e_k
.
\end{align*}

Note that it is clear from the representation \eqref{Phi^*NF-02} that the top entry of $\Psi$ should disappear, i.e.~that 
\begin{align}
\label{Psi_1=0}
(\OID+{\te}\ci\Gamma \Gamma^\ast)D\ci{\Gamma^*}^{-1} = [D\ci{\Gamma^*}^{-1}(\OID- \Gamma)+ {\te}\ci\Gamma D\ci{\Gamma}^{-1} (\Gamma^\ast-\OID)] F.
\end{align}
Indeed, by the definition of $\cK\ci\theta$ in the Sz.-Nagy--Foia\c{s} transcription the top entry of $\Phi^* f$ belongs to $H^2(\fD_*)$. One can see from Lemma  \ref{l-C1simple}, for example, that the top entry of $C_1$ belongs to matrix-valued $H^\infty$, so the top entry of $C_1 T_+^{B^*\mu} f$ is also in $H^2(\fD_*)$. Therefore the top entry of $\Psi f$ must be in $H^2(\fD_*)$ for all $f$. But that is impossible, because $f$ can be any function in $L^2(\mu;E)$. 

For a reader that is not comfortable with such ``soft'' reasoning, we present a ``hard'' computational proof of \eqref{Psi_1=0}. This computation also helps to assure the reader that the previous computations were correct. 

To do the computation, consider the term in the square brackets in the right hand side of \eqref{Psi_1=0}. 
Using the commutation relations from Lemma \ref{l:G-D_G} in the second equality, we get
\begin{align*}
D\ci{\Gamma^*}^{-1}(\OID- \Gamma) + {\te}\ci\Gamma D\ci{\Gamma}^{-1}( \Gamma^\ast-\OID)
&=
D\ci{\Gamma^*}^{-1}+ \te D\ci{\Gamma}^{-1}\Gamma^*-\te D\ci{\Gamma}^{-1}- D\ci{\Gamma^*}^{-1}\Gamma
\\
&=
D\ci{\Gamma^*}^{-1}+ \te\Gamma^* D\ci{\Gamma^*}^{-1}-\te D\ci{\Gamma}^{-1}-\Gamma D\ci{\Gamma}^{-1}
\\
&=
(\OID+\te\ci\Gamma \Gamma^*)D\ci{\Gamma^*}^{-1}
\{\bI - D\ci{\Gamma^*}(\OID+\te\ci\Gamma \Gamma^*)^{-1}(\te\ci\Gamma+\Gamma)D\ci{\Gamma}^{-1}\}
\\
& = (\OID+\te\ci\Gamma \Gamma^*)D\ci{\Gamma^*}^{-1} \{\bI-\theta\ci\OZ\};
\end{align*}
the last equality holds by Theorem \ref{t:LFT-02}. 

By Lemma \ref{l:theta-F} we have $\OID - \te\ci\OZ = F^{-1}$, 
so we have for the term in the square brackets 
\[
[D\ci{\Gamma^*}^{-1}(\OID- \Gamma)+ {\te}\ci\Gamma D\ci{\Gamma}^{-1} (\Gamma^\ast-\OID)] = 
(\OID+\te\ci\Gamma \Gamma^*)D\ci{\Gamma^*}^{-1} F^{-1},  
\]
which proves \eqref{Psi_1=0}.

To deal with the bottom entry of $\Psi$ we use the commutation relations from Lemma \ref{l:G-D_G},
\begin{align*}
\Delta\ci\Gamma \Gamma^* D\ci{\Gamma^*}^{-1} -\Delta\ci\Gamma D\ci{\Gamma}^{-1} ( \Gamma^*-\OID) F & =
\Delta\ci\Gamma D\ci{\Gamma}^{-1} \Gamma^* - \Delta\ci\Gamma D\ci{\Gamma}^{-1}  \Gamma^* F + \Delta\ci\Gamma D\ci{\Gamma}^{-1}  F
\\
& = \Delta\ci\Gamma D\ci{\Gamma}^{-1} \left( \Gamma^* + (\bI - \Gamma^*) F \right) , 
\end{align*}
which gives the desired formula \eqref{Psi_2} for $\Psi_2$.

Finally,  let us deal with the second term in the right had side of \eqref{Phi*-03}. We know from Proposition \ref{p:repr2} that the term in front of $T^{B^*\mu}_+ f$ is given by $C_1$. From Lemma \ref{l-C1simple} we get 
\begin{align*}
C_1 = 
\kf{
 D\ci{\Gamma^*}^{-1}(\OID- \Gamma)+ {\te}\ci\Gamma D\ci{\Gamma}^{-1} (\Gamma^\ast-\OID)
}
{
\Delta\ci\Gamma D\ci\Gamma^{-1} (\Gamma^*-\bI)
}.
\end{align*}
But the top entry of $C_1$ here is the expression in brackets in the right hand side of \eqref{Psi_1=0}, so it is equal to $(\OID+{\te}\ci\Gamma \Gamma^\ast)D\ci{\Gamma^*}^{-1}F^{-1}$. Therefore 
\[
C_1 = 
\kf{
 (\OID+{\te}\ci\Gamma \Gamma^\ast)D\ci{\Gamma^*}^{-1}F^{-1}
}
{
\Delta\ci\Gamma D\ci\Gamma^{-1} (\Gamma^*-\bI)
}, 
\]
which is exactly what we have in \eqref{Phi*-03}. 
\  \hfill \qed

\subsection{Representation of \texorpdfstring{$\Phi^*$}{Phi*} using matrix-valued measures}
The above Theorem \ref{t-repr3} is more transparent if we represent the direct integral $\cH$ as the weighted $L^2$ space with a matrix-valued measure. 

Namely, consider the weighted space $L^2(B^*B\mu)$ 
\[
\|f\|\ci{L^2(B^*B\mu)}^2 = \int_\T \bigl(B(\xi)^*B(\xi) f(\xi), f(\xi) \bigr)\ci{\C^d} \dd\mu(\xi) =\int_\T \| B(\xi) f(\xi)\|^2\ci{\C^d} \dd\mu(\xi) 
\]
(of course one needs to take the quotient space over the set of function with norm $0$). 

Then for all scalar functions $\f_k$ we have 
\[
\biggl\|\sum_{k=1}^d \f_k e_k \biggr\|_{L^2(B^*B\mu)} = \biggl\|\sum_{k=1}^d \f_k b_k \biggr\|_{L^2} ;
\]
recall that $e_1, e_2, \ldots, e_d$ is the standard basis in $\C^d$ and $b_k(\xi) = B(\xi)e_k$.
Then the map $\cU$
\[
\cU \biggl( \sum_{k=1}^d \f_k e_k \biggr) =  \sum_{k=1}^d \f_k b_k, \qquad \text{or, equivalently }\quad \cU f = Bf, 
\]
defines a unitary operator from $L^2(B^*B\mu)$ to $\cH$. 

The inverse operator $\cU^*$ is  given by $\cU^* f (\xi) = R(\xi) f(\xi)$, where, recall,  $R$ is a measurable pointwise right inverse of $B$, $B(\xi) R(\xi) = \bI\ci{E(\xi)}$ $\mu$-a.e.

We denote by $\wt\Phi := \cU^* \Phi$, so $\wt\Phi^* = \Phi^* \cU$, and by $T_+^{B^*B\mu} f$ the non-tangential boundary values of the Cauchy integral $\cC[B^*Bf\mu](z)$, $z\in\D$. Substituting $f=Bg$ into \eqref{Phi*-03} we can restate Theorem \ref{t-repr3}  as follows.


\begin{theo}
\label{t:repr04}
The adjoint Clark operator $\wt\Phi^*:L^2(B^*B\mu) \to \cK_\theta$ in Sz.-Nagy--Foia\c s transcription is given by 
\begin{align}
\label{Phi*-04}
\wt\Phi^*g
=
\kf{0}{\wt\Psi_2}  g
+
\kf{(\OID+\te\ci\Gamma \Gamma^*)D\ci{\Gamma^*}^{-1}F^{-1}}{\Delta\ci\Gamma D\ci{\Gamma}^{-1}
( \Gamma^* - \OID)}
T_+^{ B^*B  \mu} g, \qquad g\in L^2(B^*B\mu), 
\end{align}
 where the matrix-valued function $\wt\Psi_2(z)$ is defined as 
\begin{align}
\label{wtPsi_2}
\wt\Psi_2(z)  =\Delta\ci\Gamma D\ci{\Gamma}^{-1}
(\Gamma^*
+
(\OID - \Gamma^*) F(z))  . 
\end{align}
\end{theo}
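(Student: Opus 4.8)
The plan is to deduce Theorem~\ref{t:repr04} from Theorem~\ref{t-repr3} by the change of variables furnished by the unitary operator $\cU\colon L^2(B^*B\mu)\to\cH$, $\cU g=Bg$, introduced just above. Since $\cU$ is unitary and the Clark operator $\Phi$ is unitary, $\wt\Phi^*=\Phi^*\cU$ will be a unitary operator from $L^2(B^*B\mu)$ onto $\cK_\theta$, and for $g\in L^2(B^*B\mu)$ one simply has $\wt\Phi^*g=\Phi^*(\cU g)=\Phi^*(Bg)$. So the entire task reduces to substituting $f=Bg$ into \eqref{Phi*-03} and identifying the two resulting summands with those of \eqref{Phi*-04}.

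First I would dispatch the Cauchy-transform summand: by the definition of $T_+^{B^*\mu}$, the function $T_+^{B^*\mu}(Bg)$ is the non-tangential boundary value of $\cC[B^*(Bg)\mu](z)=\cC[(B^*Bg)\mu](z)$, which is precisely $T_+^{B^*B\mu}g$, while the matrix prefactor $\kf{(\OID+\te\ci\Gamma\Gamma^*)D\ci{\Gamma^*}^{-1}F^{-1}}{\Delta\ci\Gamma D\ci{\Gamma}^{-1}(\Gamma^*-\OID)}$ is left untouched by the substitution. Hence the second summand of \eqref{Phi*-03} becomes verbatim the second summand of \eqref{Phi*-04}.

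It remains to handle the multiplication summand, which is the only place needing a genuine observation. Recall that in Theorem~\ref{t-repr3} one has $\Psi_2=\wt\Psi_2 R$ with $R$ a measurable pointwise right inverse of $B$, so that for a.e.\ $\xi$ the operator $P(\xi):=R(\xi)B(\xi)$ on $\C^d$ is an idempotent with $\Ran(\OID\ci{\C^d}-P(\xi))=\Ker P(\xi)=\Ker B(\xi)$. By Remark~\ref{r:Psi_well-defined} --- which in turn rests on the identity $\wt\Psi_2^*\wt\Psi_2=B^*Bw$ of Proposition~\ref{p:Psi^*Psi} --- we have $\Ker B(\xi)\subset\Ker\wt\Psi_2(\xi)$ a.e., so that $\wt\Psi_2(\xi)(\OID\ci{\C^d}-P(\xi))=0$ and therefore $\Psi_2(\xi)B(\xi)=\wt\Psi_2(\xi)R(\xi)B(\xi)=\wt\Psi_2(\xi)$ a.e. Consequently $\kf{0}{\Psi_2}(Bg)=\kf{0}{\Psi_2 B\,g}=\kf{0}{\wt\Psi_2}g$, the first summand of \eqref{Phi*-04}, and \eqref{wtPsi_2} is nothing but the first expression in \eqref{Psi_2-alt}. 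Combining the two simplified summands gives \eqref{Phi*-04}; the surjectivity of $\cU$ onto $\cH$, needed to see that $\wt\Phi^*$ is unitary, follows from Lemma~\ref{l:cycl-02}, since star-cyclicity of $\Ran\bB$ forces $\Ran B(\xi)=E(\xi)$ $\mu$-a.e. The computation is short and essentially bookkeeping; the one ingredient that goes beyond rearrangement is the inclusion $\Ker B\subset\Ker\wt\Psi_2$ --- i.e.\ that $\wt\Psi_2$ is a well-defined multiplier, independent of the choice of $R$ --- and this is already supplied by Proposition~\ref{p:Psi^*Psi}, so I expect no real obstacle beyond tracking which Cauchy transform ($B^*\mu$ or $B^*B\mu$) occurs where.
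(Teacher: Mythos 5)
Your proposal is correct and follows exactly the paper's route: the paper proves Theorem \ref{t:repr04} by the one-line substitution $f=Bg$ into \eqref{Phi*-03}, relying implicitly on Remark \ref{r:Psi_well-defined} for the identity $\Psi_2 B=\wt\Psi_2$. You have simply made explicit the two bookkeeping points (that $T_+^{B^*\mu}(Bg)=T_+^{B^*B\mu}g$, and that $\Ker B\subset\Ker\wt\Psi_2$ via Proposition \ref{p:Psi^*Psi} gives $\wt\Psi_2 RB=\wt\Psi_2$), which is exactly what the paper's argument requires.
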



%

%

\subsection{A generalization of the normalized Cauchy transform}
\label{ss:GCT}
Consider the case when the unitary operator $U$ has purely singular spectrum. By virtue of Corollary \ref{c-TFAE}, the second component of the Sz.-Nagy--Foia\c s model space collapses, i.e.~$\cK_{\te\ci\Gamma} = H^2(\C^d) \ominus \te\ci\Gamma H^2(\C^d)$ for all strict contractions $\Gamma$.

The  representation formula \eqref{Phi*-03} then reduces to a generalization of the well-studied normalized Cauchy transform.


\begin{cor}\label{c-KapustinPolt}
If $\te = \theta\ci\OZ$ is inner, then
\[
(\Phi^* f)(z)
=
(\OID-\theta(z)) (T_+^{B^* \mu}f)(z)
=
(F(z))^{-1} (T_+^{B^* \mu}f)(z)
\]
for $z\in \D, f\in L^2(\mu;E)$.
\end{cor}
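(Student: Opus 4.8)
The plan is to specialize Theorem \ref{t-repr3} (or equivalently the preliminary Proposition \ref{p:repr2}) to the case when $\te=\te\ci\OZ$ is inner, and check that the multiplication term vanishes while the Cauchy term collapses to the stated expression. First I would recall that by Corollary \ref{c-TFAE}, when $U$ is purely singular the defect function satisfies $\Delta\ci\Gamma\equiv\OZ$ a.e.\ on $\T$ for every strict contraction $\Gamma$; in particular $\Delta\ci\OZ\equiv\OZ$. Hence in the formula \eqref{Phi*-03} the second component of $\kf{0}{\Psi_2}$ is $\Psi_2=\wt\Psi_2 R$ with $\wt\Psi_2 = \Delta\ci\Gamma D\ci\Gamma^{-1}(\Gamma^*+(\OID-\Gamma^*)F)$, so $\wt\Psi_2\equiv\OZ$ and therefore $\Psi_2\equiv\OZ$. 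Likewise the bottom entry $\Delta\ci\Gamma D\ci\Gamma^{-1}(\Gamma^*-\OID)$ of the vector multiplying $T_+^{B^*\mu}f$ vanishes. Thus only the top entry survives, and \eqref{Phi*-03} reduces to $\Phi^* f = (\OID+\te\ci\Gamma\Gamma^*)D\ci{\Gamma^*}^{-1}F^{-1}\, T_+^{B^*\mu}f$ living in the single-story model space $H^2(\C^d)\ominus\te\ci\Gamma H^2(\C^d)$.

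Next, I would take $\Gamma=\OZ$, so that $D\ci{\Gamma^*}=\OID$ and $\te\ci\Gamma=\te\ci\OZ=\te$, and the prefactor becomes simply $F^{-1}$. This gives $(\Phi^* f)(z) = F(z)^{-1} (T_+^{B^*\mu}f)(z)$, which is the second stated equality. For the first equality $(\Phi^* f)(z) = (\OID-\te(z))(T_+^{B^*\mu}f)(z)$, I would invoke Lemma \ref{l:theta-F}, which asserts $F(z)=(\bI-\te\ci\OZ(z))^{-1}$ for $z\in\D$ and a.e.\ on $\T$; inverting gives $F(z)^{-1}=\OID-\te(z)$, and the two forms coincide. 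One should note here that when $\te$ is inner the boundary values $\te(z)$ are unitary a.e., and $\bI-\te(z)$ is invertible a.e.\ (its determinant is a nonzero bounded analytic function with nonvanishing boundary values a.e.), so $F$ and $F^{-1}$ make sense a.e.\ on $\T$ and the identity $F^{-1}=\OID-\te$ persists to the boundary.

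The only slightly delicate point — the part I would flag as the main obstacle — is making sure the operator $\Phi^*$ in the inner case really is the simple multiplication-of-boundary-Cauchy-transform operator $f\mapsto (\OID-\te)T_+^{B^*\mu}f$ as an operator into the correct space, rather than just formally. This is handled by observing that the boundary values $T_+^{B^*\mu}f$ exist a.e.\ (classical theory of Cauchy integrals, Section \ref{s-SIO}), that the multiplier $\OID-\te$ is bounded (indeed $\|\te\|_\infty\le 1$), and that the range lands in $H^2(\C^d)$ because $\cC[B^*f\mu]$ is analytic in $\D$ and $\te\in H^\infty$; the subtraction of $\te H^2$ is exactly the passage to $\cK_\te$. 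Since all of this is subsumed in the already-established Theorem \ref{t-repr3} and Proposition \ref{d-TPlusMinus}, the proof reduces to the bookkeeping above, and I would present it in essentially that order: vanishing of $\Delta\ci\Gamma$, collapse of the two ``bottom'' terms, specialization $\Gamma=\OZ$, and finally the substitution $F^{-1}=\OID-\te$ from Lemma \ref{l:theta-F}.
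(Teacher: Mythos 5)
Your proposal is correct and follows essentially the same route as the paper: specialize Theorem \ref{t-repr3} to $\Gamma=\OZ$, use that $\Delta\ci\OZ\equiv\OZ$ for inner $\te$ to kill the multiplication term and the bottom component, and then apply Lemma \ref{l:theta-F} to identify $F^{-1}$ with $\OID-\te$. The paper's own proof is just a terser version of this same bookkeeping.
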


The first equation was also obtained in \cite[Theorem 1]{KP06}.

Here we used $\Gamma = \OZ$ only for simplicity. With the linear fractional relation \eqref{t:LFT-02}, it is not hard to write the result in terms of $\te\ci\Gamma$ for any strict contraction $\Gamma$.

\begin{proof}
Theorem \ref{t-repr3} for inner $\te$ and $\Gamma=\OZ$ immediately reduces to the first statement.
%
%
%

The equality of the second expression follows immediately from Lemma \ref{l:theta-F}.
\end{proof}

%

\section{The Clark operator}
\label{s:directClark}

Let $f\in\cH\subset L^2(\mu; E)$ and let
\begin{align}
\label{Phi^*f=h}
\Phi^* f = h = \left(\begin{array}{c} h_1\\ h_2\end{array}\right) \in\cK\ci\theta .
\end{align}
From the representation \eqref{Phi*-04} we get, subtracting from the second component the first component multiplied by an appropriate matrix-valued function, that 
\begin{align*}
\Psi_2 f = h_2 -\Delta\ci\Gamma D\ci\Gamma^{-1}(\Gamma^*-\bI) F D\ci{\Gamma^*} (\bI+\theta\ci\Gamma \Gamma^*)^{-1} h_1.
\end{align*}
Right multiplying this identity by $\Psi_2^*$, and using Proposition \ref{p:Psi^*Psi} and formulas for $\Psi_2$, $\wt\Psi_2$ from Theorem \ref{t-repr3},   we get an expression for the density of the absolutely continuous part of $\mu\ti{ac}$. Namely, we find that a.e.~(with respect to Lebesgue measure on $\T$)
\begin{align}
\label{g_{ac}-01}
w  f & = R^*F^*(\bI - \theta\ci\OZ^*\Gamma) D\ci{\Gamma}^{-1}\Delta\ci\Gamma h_2 
\\ \notag 
& \qquad -
R^*F^*(\bI - \theta\ci\OZ^*\Gamma) D\ci{\Gamma}^{-1}\Delta\ci\Gamma^2 D\ci\Gamma^{-1}(\Gamma^*-\bI) F D\ci{\Gamma^*} (\bI+\theta\ci\Gamma \Gamma^*)^{-1} h_1 
\\ \notag 
& = R^*F^*(\bI - \theta\ci\OZ^*\Gamma) D\ci{\Gamma}^{-1}\Delta\ci\Gamma h_2 
\\ \notag  
& \qquad - R^*F^* \Delta\ci\OZ^2 (\bI - \Gamma^*\theta\ci\OZ)^{-1} (\Gamma^*-\bI) F D\ci{\Gamma^*} (\bI+\theta\ci\Gamma \Gamma^*)^{-1} h_1 .
\end{align}
In the case $\Gamma=\OZ$ the above equation simplifies:
\begin{align}
\label{g_{ac}-02}
w f & = R^*F^*\Delta\ci\OZ h_2 + R^*F^* \Delta\ci\OZ^2 F h_1 
\\ \notag 
& = R^*F\Delta\ci\OZ h_2 + wB h_1;
\end{align}
in the second equality we use \eqref{Psi^*Psi}. 

The above formulas \eqref{g_{ac}-01}, \eqref{g_{ac}-02} determine the absolutely continuous part of $f$. 

The singular part of $f$ was in essence computed in \cite{KP06}.  Formally it was computed there  only for inner functions $\theta$, but using the ideas and results from \cite{KP06} it is easy to get the general case from our Theorem \ref{t-repr3}. 


For the convenience of the reader, we give a self-contained presentation.

\begin{lem}
\label{l:polt}
Let $f\in L^2(\T, \mu; \C^d)$. Then $\mu\ti s$-a.e.~the nontagential boundary values of $\cC [f\mu](z)/\cC[\mu](z)$, $z\in\D$ exist and equal $f(\xi)$, $\xi\in\T$.  
\end{lem}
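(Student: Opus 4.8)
The statement is the classical Poltoratski theorem on non-tangential limits of the normalized Cauchy transform with respect to the singular part of the measure, now stated for $\C^d$-valued $f$. The plan is to reduce it to the scalar case and then recall the scalar argument. For the reduction, write $f = (f_1, \ldots, f_d)$ in coordinates; the Cauchy transform $\cC[f\mu]$ is taken componentwise, so $\cC[f\mu](z)/\cC[\mu](z) = (\cC[f_1\mu](z)/\cC[\mu](z), \ldots, \cC[f_d\mu](z)/\cC[\mu](z))$. Since $\mu\ti s$-a.e.\ convergence of a vector is equivalent to $\mu\ti s$-a.e.\ convergence of each component, it suffices to prove the assertion for each scalar $f_k \in L^2(\T,\mu)$; here one uses $L^2(\T,\mu;\C^d) = \bigoplus_{k=1}^d L^2(\T,\mu)$ and that a finite intersection of sets of full $\mu\ti s$-measure again has full $\mu\ti s$-measure. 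Thus the lemma follows from the scalar statement: for $g \in L^2(\T,\mu)$, the non-tangential boundary values of $\cC[g\mu]/\cC[\mu]$ exist and equal $g$ $\mu\ti s$-a.e.

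For the scalar statement I would recall Poltoratski's argument (see \cite{KP06} and the references therein). The key steps are: first, observe that it suffices to prove the claim for a dense class of $g$, namely bounded $g$, provided one has a suitable maximal-function bound to pass to the limit; more precisely, one uses that the operator $g \mapsto \sup_{z} |\cC[g\mu](z)/\cC[\mu](z)|$ (non-tangential maximal function) is of weak type with respect to $\mu\ti s$, so that the set where convergence fails is controlled. Second, for a \emph{bounded} $g$ one splits $\cC[g\mu] = g(\xi_0)\cC[\mu] + \cC[(g - g(\xi_0))\mu]$ at a point $\xi_0$ of approximate continuity of $g$ relative to $\mu$; dividing by $\cC[\mu]$, the first term gives $g(\xi_0)$ and the second term is shown to tend to $0$ non-tangentially at $\mu\ti s$-a.e.\ $\xi_0$, using that $\cC[\mu](z) \to \infty$ non-tangentially at $\mu\ti s$-a.e.\ point (a consequence of $\mu\ti s \perp m$ and standard harmonic-measure estimates) while $\cC[(g-g(\xi_0))\mu](z)$ grows strictly slower because of the vanishing of $g - g(\xi_0)$ near $\xi_0$. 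Third, one removes the boundedness assumption by the maximal-function/density argument from the first step.

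The genuinely hard analytic input is entirely in the scalar theorem, and in particular in the two facts that (i) $\cC[\mu](z) \to \infty$ non-tangentially $\mu\ti s$-a.e., and (ii) the non-tangential maximal operator of the normalized Cauchy transform is weak type $(1,1)$ (or $(2,2)$) with respect to $\mu\ti s$. In the present paper these are not re-proved; instead I would cite Poltoratski's theorem directly (as presented in \cite{KP06}), so that the only work done here is the bookkeeping reduction from vectors to components. Consequently the main (and only) obstacle for \emph{this} lemma is simply to state the componentwise reduction cleanly and to note that the scalar ingredient is exactly \cite[Theorem~2.2 or the results used in Theorem 1]{KP06}; everything else is routine. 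I would therefore keep the written proof short: reduce to scalars, invoke the cited scalar result on each coordinate, intersect $d$ sets of full $\mu\ti s$-measure, and conclude.
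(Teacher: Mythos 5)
Your proposal matches the paper's approach exactly: the paper likewise reduces to the scalar case by applying the componentwise argument to the entries of $f$ and then invokes Poltoratski's scalar theorem (cited there as \cite{NONTAN}, with \cite{KP06} mentioned for the general vector-valued version) rather than reproving it. The additional sketch you give of the scalar argument is consistent with the standard proof but is not needed, since the paper, like you, treats the scalar statement as a black box.
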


This lemma was proved in  \cite{KP06} even for a more general case of $f\in L^2(\mu;E)$, where $E$ is a separable Hilbert space. Note that  our case $E=\C^d$ follows trivially by applying  the corresponding scalar result ($E=\C$) proved in \cite{NONTAN} to entries of the vector $f$.

Applying the above Lemma to the representation giving by the first coordinate of \eqref{Phi*-03} from Theorem \ref{t-repr3} we get that for $f$ and $h$ related by \eqref{Phi^*f=h} we have 
\begin{align*}
B^*f  = \frac1{\cC[\mu]} F D\ci{\Gamma^*} (\bI+\theta\ci \Gamma \Gamma^*)^{-1} h_1 \qquad \mu\ti s\text{-a.e.}
\end{align*}
Left multiplying this identity by $R^*$ we get that 
\begin{align}
\label{f_sing}
\Phi h = f = \frac1{\cC[\mu]} R^* F D\ci{\Gamma^*} (\bI+\theta\ci \Gamma \Gamma^*)^{-1} h_1 \qquad \mu\ti s\text{-a.e.}
\end{align}

Summarizing, we get the following theorem, describing the direct Clark operator $\Phi$.
\begin{theo}
\label{t:direct Clark}
If $\Phi^* f = h$ as in \eqref{Phi^*f=h}, so $f=\Phi h$, then the absolutely continuous part of $f$ is given by \eqref{g_{ac}-01} and the singular part of $f$ is given by \eqref{f_sing}.
\end{theo}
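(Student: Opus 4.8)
The plan is to assemble Theorem \ref{t:direct Clark} directly from the two computations that have already been carried out in this section, namely the formula \eqref{g_{ac}-01} for the absolutely continuous part of $f=\Phi h$ and the formula \eqref{f_sing} for its singular part. Since $f$ is an element of $\cH\subset L^2(\mu;E)$, writing $f=f\ti{ac}+f\ti{sing}$ with $f\ti{ac}=\1\ci{\supp\mu\ti{ac}}f$ and $f\ti{sing}=\1\ci{\supp\mu\ti{sing}}f$ (where the decomposition $\mu=\mu\ti{ac}+\mu\ti{sing}$ is the one fixed before Theorem \ref{t-AC}), the claim is simply that the two parts are given by the two displayed formulas. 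So the proof is essentially a matter of recording what was proved and checking that nothing was left dangling.

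First I would recall the standing setup: $\Phi$ is a Clark operator, $T=T\ci\Gamma$ with $\Gamma$ a strict contraction, $\Ran\bB$ star-cyclic, so by Lemma \ref{l-cnu} the operator $T$ is c.n.u.\ and the Sz.-Nagy--Foia\c s model applies; and $\cM_\theta$ is taken in the Sz.-Nagy--Foia\c s transcription with characteristic function $\theta=\theta\ci\Gamma$ and the parametrizing operators $\bC$, $\bC_*$ from Lemma \ref{l:C_N-F}, which agree with the Clark model. Then $\Phi^*$ is given by \eqref{Phi*-03} of Theorem \ref{t-repr3}, and it is exactly this formula that both \eqref{g_{ac}-01} and \eqref{f_sing} are derived from earlier in the section. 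For the absolutely continuous part: starting from $\Phi^*f=h=(h_1,h_2)^{\mathsf T}$, one eliminates $h_1$ from the second coordinate of \eqref{Phi*-04} to isolate $\Psi_2 f$ (using that $T_+^{B^*\mu}f$ is common to both coordinates up to the matrix factors read off from \eqref{Phi*-03}), then left-multiplies by $\Psi_2^*$ and invokes Proposition \ref{p:Psi^*Psi}, which gives $\Psi_2^*\Psi_2=w\,\bI\ci{E(\xi)}$ $\mu\ti{ac}$-a.e.; this collapses the left side to $wf$ and yields \eqref{g_{ac}-01}, with the further simplification \eqref{g_{ac}-02} when $\Gamma=\OZ$ obtained via \eqref{Psi^*Psi}. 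For the singular part: one applies Lemma \ref{l:polt} (the Poltoratski-type nontangential limit result, valid $\mu\ti s$-a.e.) to the \emph{first} coordinate of \eqref{Phi*-03}, which has the form $(\OID+\te\ci\Gamma\Gamma^*)D\ci{\Gamma^*}^{-1}F^{-1}T_+^{B^*\mu}f$; since $T_+^{B^*\mu}f$ is the nontangential boundary value of $\cC[B^*f\mu]$ and Lemma \ref{l:polt} says $\cC[B^*f\mu]/\cC[\mu]\to B^*f$ $\mu\ti s$-a.e., dividing by $\cC[\mu]$ and left-multiplying by $R^*$ (a measurable right inverse of $B$, so $R^*B^*=\bI$ where it matters $\mu\ti s$-a.e., or rather $B^*f$ recovered and then $R^*B^*f$ giving back $f$ on the singular support) produces \eqref{f_sing}.

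Assembling the theorem is then immediate: $f=\Phi h$, its $\mu\ti{ac}$-part is determined by \eqref{g_{ac}-01} and its $\mu\ti{sing}$-part by \eqref{f_sing}, and together they determine $f$ because $\mu=\mu\ti{ac}+\mu\ti{sing}$ and $\cH\subset L^2(\mu;E)$. The only genuine subtlety — and the step I would flag as the main obstacle — is making precise the sense in which these two formulas "determine" $f$: one must check that the element $f$ of $L^2(\mu;E)$ is genuinely reconstructed from $h\in\cK_\theta$, i.e.\ that the right-hand sides define an $L^2(\mu;E)$ function (the singular piece lives on $\supp\mu\ti s$, the a.c.\ piece is $w^{-1}$ times the right side of \eqref{g_{ac}-01} on $\{w>0\}$) and that this function is indeed $\Phi h$ rather than merely sharing boundary data with it. This follows because $\Phi$ is unitary and $\Phi^*$ was shown in Theorem \ref{t-repr3} to be given by the stated bounded operator, so $\Phi=(\Phi^*)^{-1}=(\Phi^*)^*$ is uniquely pinned down; one simply has to verify that the explicit right-hand sides agree $\mu$-a.e.\ with the a.c.\ and singular components of $(\Phi^*)^*h$, which is exactly the content of the two derivations recalled above. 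The proof therefore reduces to a short paragraph citing Theorem \ref{t-repr3}, Proposition \ref{p:Psi^*Psi}, and Lemma \ref{l:polt}, and pointing to the derivations of \eqref{g_{ac}-01} and \eqref{f_sing} that precede the theorem's statement.
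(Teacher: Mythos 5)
Your proposal is correct and follows essentially the same route as the paper: the paper's proof of Theorem \ref{t:direct Clark} is exactly the derivation preceding its statement, namely eliminating $h_1$ from the second coordinate of \eqref{Phi*-04}, applying $\Psi_2^*$ together with Proposition \ref{p:Psi^*Psi} to obtain \eqref{g_{ac}-01}, and applying Lemma \ref{l:polt} to the first coordinate of \eqref{Phi*-03} followed by left multiplication by $R^*$ to obtain \eqref{f_sing}. Your closing discussion of whether the two formulas ``determine'' $f$ is harmless but not required, since the theorem only asserts the formulas for the absolutely continuous and singular parts of $f$.
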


\providecommand{\bysame}{\leavevmode\hbox to3em{\hrulefill}\thinspace}
\providecommand{\MR}{\relax\ifhmode\unskip\space\fi MR }
\providecommand{\MRhref}[2]{%
  \href{http://www.ams.org/mathscinet-getitem?mr=#1}{#2}
}
\providecommand{\href}[2]{#2}

\end{document}